\definecolor{verylight}{gray}{0.97}
\definecolor{light}{gray}{0.9}
\definecolor{medium}{gray}{0.85}
\definecolor{dark}{gray}{0.6}
 \def\NZQ{\mathbb}               
 \def\NN{{\NZQ N}}
 \def\ZZ{{\NZQ Z}}
 \def\FF{{\NZQ F}}
 \def\GG{{\NZQ G}}
 \def\HH{{\NZQ H}}
 \def\frk{\mathfrak}               
 \def\mm{{\frk m}}
 \def\Rc{{\mathcal R}}
 \def\G{{\mathcal G}}
 \def\F{{\mathcal F}}
  \def\Gc{{\mathcal G}}
  \def\Ac{{\mathcal A}}
 \def\R{{\mathcal R}}
\def\Bc{{\mathcal B}}
\def\Cc{{\mathcal C}}
\def\Wc{{\mathcal W}}
\def\Sc{{\mathcal S}}
 \def\ab{{\mathbf a}}
 \def\bb{{\mathbf b}}
 \def\xb{{\mathbf x}}
 \def\yb{{\mathbf y}}
 \def\zb{{\mathbf z}}
 \def\cb{{\mathbf c}}
 \def\db{{\mathbf d}}
 \def\wb{{\mathbf w}}
 \def\opn#1#2{\def#1{\operatorname{#2}}} 
 \opn\chara{char} \opn\length{\ell} \opn\pd{pd} \opn\rk{rk}
 \opn\projdim{proj\,dim} \opn\injdim{inj\,dim} \opn\rank{rank}
 \opn\depth{depth} \opn\grade{grade} \opn\height{height}
 \opn\embdim{emb\,dim} \opn\codim{codim}
 \opn\Tr{Tr} \opn\bigrank{big\,rank}
 \opn\superheight{superheight}\opn\lcm{lcm}
 \opn\trdeg{tr\,deg}
 \opn\reg{reg} \opn\lreg{lreg} \opn\ini{in} \opn\lpd{lpd}
 \opn\size{size} \opn\sdepth{sdepth}
 \opn\link{link}\opn\fdepth{fdepth}\opn\lex{lex}
 \opn\tr{tr}
 \opn\type{type}
 \opn\gap{gap}
 \opn\arithdeg{arith-deg}
 \opn\revlex{revlex}
 \opn\div{div} \opn\Div{Div} \opn\cl{cl} \opn\Cl{Cl}
 \opn\Spec{Spec} \opn\Supp{Supp} \opn\supp{supp} \opn\Sing{Sing}
 \opn\Ass{Ass} \opn\Min{Min}\opn\Mon{Mon}
 \opn\Ann{Ann} \opn\Rad{Rad} \opn\Soc{Soc}
 \opn\Im{Im} \opn\Ker{Ker} \opn\Coker{Coker} \opn\Am{Am}
 \opn\Hom{Hom} \opn\Tor{Tor} \opn\Ext{Ext} \opn\End{End}
 \opn\Aut{Aut} \opn\id{id}
 \def\F{{\mathcal F}}
 \opn\nat{nat}
 \opn\pff{pf}
 \opn\Pf{Pf} \opn\GL{GL} \opn\SL{SL} \opn\mod{mod} \opn\ord{ord}
 \opn\Gin{Gin} \opn\Hilb{Hilb}\opn\sort{sort}
 \opn\PF{PF}\opn\Ap{Ap}
 \opn\mult{mult}
 \opn\bight{bight}
 \opn\aff{aff}
 \opn\relint{relint} \opn\st{st}
 \opn\lk{lk} \opn\cn{cn} \opn\core{core} \opn\vol{vol}  \opn\inp{inp} \opn\nilpot{nilpot}
 \opn\link{link} \opn\star{star}\opn\lex{lex}\opn\set{set}
 \opn\width{wd}
 \opn\Fr{F}
 \opn\QF{QF}
 \opn\G{G}
 \opn\type{type}\opn\res{res}
 \opn\conv{conv}
 \opn\Deg{Deg}
 \opn\Sym{Sym}
 \opn\gr{gr}
 \def\pot#1#2{#1[\kern-0.28ex[#2]\kern-0.28ex]}
 \opn\dirlim{\underrightarrow{\lim}}
 \opn\inivlim{\underleftarrow{\lim}}
 \let\iso=\cong
 \let\Union=\bigcup
 \let\Dirsum=\bigoplus
 \let\to=\rightarrow
 \let\To=\longrightarrow
 \def\Implies{\ifmmode\Longrightarrow \else
         \unskip${}\Longrightarrow{}$\ignorespaces\fi}
 \def\implies{\ifmmode\Rightarrow \else
         \unskip${}\Rightarrow{}$\ignorespaces\fi}
 \def\iff{\ifmmode\Longleftrightarrow \else
         \unskip${}\Longleftrightarrow{}$\ignorespaces\fi}
 \newtheorem{Theorem}{Theorem}[section]
 \newtheorem{Lemma}[Theorem]{Lemma}
 \newtheorem{Corollary}[Theorem]{Corollary}
 \newtheorem{Proposition}[Theorem]{Proposition}
 \newtheorem{Remark}[Theorem]{Remark}
 \newtheorem{Example}[Theorem]{Example}
 \let\epsilon\varepsilon
 \let\kappa=\varkappa
 \def\qed{\ifhmode\textqed\fi
       \ifmmode\ifinner\quad\qedsymbol\else\dispqed\fi\fi}
 \def\textqed{\unskip\nobreak\penalty50
        \hskip2em\hbox{}\nobreak\hfil\qedsymbol
        \parfillskip=0pt \finalhyphendemerits=0}
 \def\dispqed{\rlap{\qquad\qedsymbol}}
 \opn\dis{dis}
 \def\pnt{{\raise0.5mm\hbox{\large\bf.}}}
 \opn\Lex{Lex}
\begin{document}

\title{Componentwise linear powers and the $x$-condition}

\author {J\"urgen Herzog, Takayuki Hibi  and  Somayeh Moradi }

\address{J\"urgen Herzog, Fachbereich Mathematik, Universit\"at Duisburg-Essen, Campus Essen, 45117
Essen, Germany} \email{juergen.herzog@uni-essen.de}

\address{Takayuki Hibi, Department of Pure and Applied Mathematics,
Graduate School of Information Science and Technology, Osaka
University, Suita, Osaka 565-0871, Japan}
\email{hibi@math.sci.osaka-u.ac.jp}

\address{Somayeh Moradi, Department of Mathematics, School of Science, Ilam University,
P.O.Box 69315-516, Ilam, Iran}
\email{so.moradi@ilam.ac.ir}

\dedicatory{ }
 \keywords{componentwise linear, $x$-condition, symmetric powers, vertex cover ideals}
     \subjclass{Primary 05E40, 13A02;  Secondary 13P10}
\thanks{The second author was supported by JSPS KAKENHI 19H00637.}

\begin{abstract}
Let $S=K[x_1,\ldots,x_n]$ be the polynomial ring over a field and   $A$ a standard graded  $S$-algebra. In terms of the Gr\"obner basis of the defining ideal $J$ of $A$ we give a condition, called the x-condition, which implies that all graded  components $A_k$  of $A$ have linear quotients and with additional assumptions are componentwise linear. A typical example of such an algebra is the Rees ring $\R(I)$ of a graded ideal or the symmetric algebra $\Sym(M)$ of a module $M$. We apply our criterion to study certain symmetric algebras and the powers  of vertex cover ideals of certain classes of graphs.
\end{abstract}

\maketitle

\setcounter{tocdepth}{1}

\section*{Introduction}

Let $K$  be a field,   $S = K[x_1,\ldots, x_n]$ be the polynomial ring in $n$ variables over $K$, and
let $\mm =(x_1,\ldots, x_n)$ be its graded maximal ideal. In 1999\, the first and second author of this paper introduced in \cite{HH2} the concept of componentwise linear ideals. Assuming that $K$ is of characteristic $0$, componentwise linear ideals  $I\subset S$ are characterized by the remarkable property that they are graded and $\beta_{i,j}(I)=\beta_{i,j}(\Gin(I))$, see \cite[Theorem 1.1]{AHH1}. Here, $\beta_{i,j}(M)$ denote the graded Betti numbers of a graded $S$-module $M$ and $\Gin(I)$ the generic initial ideal of $I$ with respect to the reverse lexicographic order. A componentwise linear ideal $I\subset S$  which does not contain a variable has another  interesting property, namely that it is Golod (see \cite[Theorem 4]{HRW}), so that in particular the Poincare series of $S/I$ is a rational function.

A finitely generated graded $S$-module $M$ is called {\em componentwise linear}, if for each integer  $j$ the submodule of $M$ generated by all elements of degree $j$ in $M$ has linear resolution.
It can be easily seen that modules with linear resolution are componentwise linear. New and interesting situations occur when not all generators of the module are of same degree. Nevertheless, in this more general case, componentwise linear ideals behave with respect to several properties very much like modules with linear resolution. A first such example is the following: we say that  $M$ has linear quotients,  if there exists  a system of homogeneous generators $f_1,\ldots,f_m$  of $M$ with the property that each of the colon ideals $(f_1,\ldots,f_{j-1}):f_j$ is generated by linear forms. It is well-known that if $M$ is generated in a single degree $d$ and $M$ has linear quotients with respect to $f_1,\ldots,f_m$  where each  $f_i$ is of degree $d$, then $M$ has linear resolution. For  a graded $S$-module,  which is not necessarily generated in a single degree,  we show in Proposition~\ref{componentwise} that it is componentwise linear  if it has linear quotients with respect to a sequence  $f_1,\ldots,f_m$  satisfying the additional condition  that $\deg f_1\leq \deg f_2 \leq \cdots \leq \deg f_m$.
The ideal $I=(x_1^2,x_2^2)$ is not componentwise linear,  but it admits  the following sequence of generators with linear quotients $x_1^2,x_1x_2^2,x_2^2$. This shows that in Proposition~\ref{componentwise} we need the degree assumptions on the $f_i$. The sequence  in this example is also bad, as  it is not a minimal system of generators of $I$. Surprisingly,  a graded ideal is componentwise linear if it  is minimally generated by a sequence with  linear quotients, see \cite[Theorem 8.2.15]{HH}. This statement as well as its proof is word by word the same for graded modules. Summarizing we can say the following: let $M$ be a graded $S$-module and $f_1,\ldots,f_m$ a system of generators  with  linear quotients for $M$. Then $M$ is componentwise linear if either $f_1,\ldots,f_m$ is a minimal system of generators of $M$ or $\deg f_1\leq \deg f_2\leq \cdots \leq \deg f_m$.
In Section~1 we put together what we need regarding linear quotients, and show in addition to the above  mentioned  Proposition~\ref{componentwise} that if the linear quotient sequence $f_1,\ldots,f_m$  of the module $M$ is a minimal system of generators of $M$, then the projective dimension and the regularity of $M$ can be expressed in terms of the $f_i$ and their linear quotients, see Proposition~\ref{generalbetti}. The proof is very similar to the corresponding result for ideals, due to Sharifan and Varbaro \cite[Corollary 2.4]{SV}. It is also observed that $M$ has linear quotients if for a suitable monomial order, the initial module of  the first syzygy module of $M$
is at most linear, see ~Proposition~\ref{linear quotients} for the precise statement.

The second instance where componentwise linear modules behave like  modules with linear resolution is the following: let $\FF: 0\to F_p\to \cdots \to F_1\to F_0\to 0$ be the graded minimal free $S$-resolution of $M$. One defines a filtration on $\FF$ by setting $\F_j F_i=\mm^{j-i} F_i$ for all $i$ and $j$. It is obvious that  the associated graded complex $\gr_F(\FF)$ is acyclic if $M$ has linear resolution. In \cite{HI}  a module $M$ is called a  {\em Koszul module}, if $\gr_F(\FF)$ is acyclic. Thus,  modules with linear resolution are Koszul. R\"omer \cite[Theorem 3.2.8]{R1} and Yanagawa
\cite[Proposition 4.9]{Y}
 independently proved the beautiful result which says that $M$ is componentwise linear if and only if $M$ is Koszul.

In this paper we focus on powers of componentwise linear ideals and symmetric powers of componentwise linear modules, and add another instance of similar behaviour of linearity  and componentwise linearity  by   showing that the so-called $x$-condition for equigenerated ideals admits a  natural extension which allows to study powers of componentwise linear modules. For this purpose we consider in Section~2 a bigraded $K$-algebra $A$. Let $K$ be a field and $A=\Dirsum_{i,j}A_{(i,j)}$ be a bigraded $K$-algebra with $A_{(0,0)}=K$. We set  $A_j=\Dirsum_iA_{(i,j)}$. Then $A=\Dirsum_jA_j$ has the structure of a graded $A_0$-algebra, where $A_0$ is a graded $K$-algebra and each $A_j$ is a graded $A_0$-module with grading  $(A_j)_i=A_{(i,j)}$ for all $i$.

It is required that
\begin{enumerate}
\item[(i)] $A_0$ is the polynomial ring $S= K[x_1,\ldots,x_n]$ with the standard grading;

\item[(ii)] $A_1$  is finitely generated and $(A_1)_i=0$ for $i<0$;

\item[(iii)] $A=S[A_1]$, i.e., $A_i=A_1^i$ for all $i\geq 1$.
\end{enumerate}

A typical example of such an algebra is the Rees ring of a graded ideal $I\subset S$ or the symmetric algebra of a finitely generated graded $S$-module.

We fix a system of homogeneous generators $f_1,\ldots,f_m$ of $A_1$ with $\deg f_i=d_i$ for $i=1,\ldots,m$.
Let $T=K[x_1,\ldots,x_n,y_1,\ldots,y_m]$  be the bigraded polynomial ring   over $K$ in the indeterminates $x_1,\ldots, x_n, y_1,\ldots, y_m$ with
$\deg x_i=(1,0)$ for $i=1,\ldots,n$ and $\deg y_j=(d_j,1)$  for  $j=1,\ldots,m$.

We define the $K$-algebra homomorphism $\varphi\: T\to A$ with $\varphi(x_i)=x_i$ for $i=1,\ldots,n$ and $\varphi(y_j)=f_j$ for $j=1,\ldots, m$.  Then $\varphi$ is a surjective $K$-algebra homomorphism of bigraded $K$-algebras, and hence  $J=\Ker(\varphi)$ is a bigraded ideal in $T$. We say that the defining ideal $J$ of $A$ satisfies the {\em  $x$-condition}, with respect to a monomial order $<$ on $T$  if all $u\in G(\ini_<(J))$ are of the form $vw$ with $v\in S$ of degree $\leq 1$ and
$w\in K[y_1,\ldots,y_m]$.  As a main general result of this paper we show in Theorem~\ref{main} that if $J$ satisfies the $x$-condition for a specified monomial order on $T$, then for all $k\geq 1$, the graded $S$-modules $A_k$ have linear quotients.  This theorem generalizes a similar result \cite[Theorem 10.1.9]{HH} given for monomial ideals generated in a single degree. In the course of the proof of Theorem~\ref{main},  for each $k$ an explicit linear quotient sequence for $A_k$ is constructed.  In order to conclude then  that $A_k$ is componentwise linear it is required that this sequence either satisfies (i) the non-decreasing  degree condition,  or (ii) is a minimal set of generators for $A_k$. Condition (i) is satisfied if the  monomial order defined on $T$ is a weighted monomial order, see Corollary~\ref{weightedworks} for the precise statement. In particular (i)  is satisfied if all $f_i$ are of same degree. Condition (ii) is satisfied   if the Gr\"obner basis of $J$ is contained in $(x_1,\ldots,x_n)T$ (see Proposition~\ref{carisokay}),  and more important if  the initial ideal of $J$ is generated by monomials of degree $2$,  as shown in Theorem~\ref{monomialcase}. This condition on the initial ideal  is fulfilled  for the vertex cover ideal of the graphs which we study in Section~4.

In Section 3 we apply the theory developed is Section 2 to symmetric powers of modules and consider the interesting case of a module $M$ which is generated by any subset $\Sc$ of the canonical generators of the first Koszul cycles of the maximal ideal of $S$. Such a subset $\Sc$ is in bijection to the edges of a graph $G$ on the vertex set $\{1,\ldots,n\}$, and it turns out that the defining ideal $J$ of the symmetric algebra of the corresponding  module is just the binomial edge ideal of $G$. Since the initial ideal of binomial edge ideals are known
(\cite[Theorem 2.1]{HHHKR} and
\cite[Theorem 3.2]{Oht}), the $x$-condition  can be expressed in combinatorial terms of the underlying graph. Denoting  by $M_G$, the module $M$ which is defined by $G$ it is shown in Theorem~\ref{mg} that $\Sym_k(M_G)$ has linear resolution if and only if $G$ is chordal.
Theorem~\ref{additional} gives some additional information about the depth of the modules $\Sym_k(M_G)$.  We conclude this section by  Proposition~\ref{observation} which says that if $M$ is a graded module and the defining ideal $J$ of $\Sym(M)$ satisfies the $x$-condition, then all symmetric powers of $M$ are componentwise linear.

In Section 4 we consider vertex cover ideals of certain classes of graphs and show that the defining ideal $J$ of their Rees algebras satisfies the $x$-condition. The families of graphs which we consider are biclique  graphs, path graphs and Cameron--Walker graphs whose bipartite graph is complete. In each of these cases we compute the Gr\"{o}bner bases of $J$ and show that $\ini_<(J)$ is generated by quadratic monomials. By Theorem~\ref{monomialcase} this implies that all the powers of these vertex cover ideals are componentwise linear. To determine the Gr\"{o}bner bases of $J$ in these cases is quite involved and the answer is pretty complex. This is one reason why for example for chordal graphs we could not yet go beyond path graphs. But we expect that the powers of the vertex cover ideal of any chordal graph are all componentwise linear. Other cases of graphs whose powers of its  vertex cover ideals are componentwise linear have been considered in \cite{EQ}, \cite{HHO1}, \cite{M1} and \cite{M}.  In paper  \cite{HHO1} it is taken advantage of the property that componentwise linear ideals are Koszul.

\section{Componentwise linear modules}
\label{one}

\bigskip

Let  $S = K[x_1,\ldots, x_n]$ the polynomial ring over the field $K$, $M$ be a finitely generated $S$-module and $f_1,\ldots,f_m$ be a system of generators for $M$. For any $1\leq j\leq m$, set $I_j=(f_1,\ldots,f_{j-1}):f_j$, where $(f_1,\ldots,f_{j-1})$ is the submodule of $M$ generated by $f_1,\ldots,f_{j-1}$.  We say that $M$ has \emph{linear quotients} with respect to $f_1,\ldots,f_m$, if the
ideal $I_j$ is either $S$ or is generated by linear forms for any $1\leq j\leq m$. Moreover, $M$ is said to have linear quotients, when it has linear quotients with respect to some system of its generators.
In the following, for a graded $S$-module $M$, we denote by $M_{\langle  k\rangle }$, the submodule of $M$ generated by all elements $m\in M$ of degree $k$.

\begin{Proposition}\label{componentwise}
Let $M$ be a finitely generated graded $S$-module admitting  linear quotients with respect to $f_1,\ldots,f_m$, and assume that   $\deg f_1 \leq \deg f_2 \leq \cdots\leq \deg f_m $. Then
$M$ is componentwise linear.
\end{Proposition}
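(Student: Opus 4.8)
The strategy is induction on the number of distinct degrees appearing among $f_1,\dots,f_m$, reducing the general statement to the known equigenerated case. Let $d_1 = \deg f_1 \leq \cdots \leq \deg f_m = d_r$ be the degree sequence, and suppose the distinct values occurring are $e_1 < e_2 < \cdots < e_r$. Fix an integer $k$; we must show that $M_{\langle k\rangle}$ has a linear resolution. If $k < e_1$ then $M_{\langle k\rangle} = 0$ and there is nothing to prove, so assume $k \geq e_1$.

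First I would isolate the generators of degree exactly $e_1$, say $f_1,\dots,f_s$ (these form an initial segment because the degrees are non-decreasing), and observe that $N := (f_1,\dots,f_s) \subseteq M$ is generated in the single degree $e_1$ and inherits linear quotients from the sequence $f_1,\dots,f_s$: indeed $(f_1,\dots,f_{j-1}):f_j$ computed inside $N$ coincides with the ideal $I_j$ computed inside $M$ for $j \leq s$, since the submodule $(f_1,\dots,f_{j-1})$ of $M$ is contained in $N$ and the colon ideal only depends on that submodule and on $f_j$. Hence $N$ has a linear resolution. The quotient $M/N$ is then generated by the images of $f_{s+1},\dots,f_m$, whose degrees are $\leq$ those of $f_{s+1},\dots,f_m$ and still non-decreasing; I would argue that $M/N$ again has linear quotients with respect to these images, because $(\bar f_{s+1},\dots,\bar f_{j-1}):\bar f_j$ in $M/N$ equals $(f_1,\dots,f_{j-1}):f_j = I_j$ in $M$ (the submodule generated in $M/N$ by $\bar f_{s+1},\dots,\bar f_{j-1}$ pulls back to $(f_1,\dots,f_{j-1})$). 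So by induction on the number of distinct degrees (or on $m$), $M/N$ is componentwise linear.

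Now I would run the standard degree-by-degree argument. For the truncation at degree $k$, consider the short exact sequence relating $M_{\langle k\rangle}$, $N$ (or a truncation of $N$), and a truncation of $M/N$. Concretely, when $k = e_1$ we have $M_{\langle e_1\rangle} = N$, which is linear. When $k > e_1$, I would use that $N$ has a linear resolution, hence $N_{\langle k\rangle} = \mm^{k-e_1} N$ also has a linear resolution, together with the short exact sequence
\[
0 \to N_{\langle k\rangle} \cap M_{\langle k\rangle} \to M_{\langle k\rangle} \to \left(M/N\right)_{\langle k\rangle} \to 0,
\]
or more cleanly the sequence $0 \to N \cap M_{\langle k\rangle} \to M_{\langle k\rangle} \oplus N \to \cdots$; the point is to compare the truncation $M_{\langle k\rangle}$ with the linear module $N_{\langle k\rangle}$ and the componentwise-linear $(M/N)_{\langle k\rangle}$ and conclude via the long exact sequence in $\Tor$ that $M_{\langle k\rangle}$ has regularity $k$, i.e. a linear resolution. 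This is exactly the mechanism used in the ideal case (Herzog--Hibi, Theorem 8.2.15 and its surroundings), and the arguments there are formal consequences of the category of graded modules, so they transfer verbatim.

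The main obstacle is getting the short exact sequence in the induction step to interact correctly with truncation: one needs that $N \cap M_{\langle k\rangle}$ is itself componentwise linear (or at least has the right regularity), and this requires knowing $N$ has a \emph{linear} resolution — not merely componentwise linear — which is why the separation of the bottom-degree generators is essential. A secondary technical point is verifying that the inherited linear-quotient sequences really do satisfy the colon-ideal identities claimed above; these are straightforward but must be checked with care since $M$ is a module, not an ideal, so "colon" means $\{a \in S : a f_j \in (f_1,\dots,f_{j-1})\}$ and one must confirm this is unchanged under the two operations (restricting to a submodule containing the relevant generators, and passing to a quotient). Once these compatibilities are in place, the result follows by the same bookkeeping as for componentwise linear ideals with linear quotients.
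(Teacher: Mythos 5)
Your proposal is correct, and the decomposition is a coarser variant of the paper's. The paper builds $M$ up one generator at a time, setting $M_j=(f_1,\dots,f_j)$ and using the exact sequence $0\to M_{j-1}\to M_j\to (S/I_j)(-d_j)\to 0$, where $(S/I_j)(-d_j)$ has a $d_j$-linear (Koszul) resolution because $I_j$ is generated by linear forms; after truncating at degree $d_j$ one reads off linearity of $(M_j)_{\langle d_j\rangle}$ from the Tor long exact sequence, and the remaining degrees are handled by $(M_j)_{\langle k\rangle}=(M_{j-1})_{\langle k\rangle}$ for $k<d_j$ and $(M_j)_{\langle k\rangle}=\mm^{k-d_j}(M_j)_{\langle d_j\rangle}$ for $k>d_j$. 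You instead peel off the entire bottom-degree block $N=(f_1,\dots,f_s)$ at once, note that $N$ is equigenerated with linear quotients and therefore has a linear resolution, and recurse on $M/N$. This shrinks the induction to the number of distinct degrees rather than the number of generators, at the cost of the extra (but correct, and correctly verified) observation that linear quotients descend to $M/N$ because $(\bar f_{s+1},\dots,\bar f_{j-1}):\bar f_j$ in $M/N$ and $(f_1,\dots,f_{j-1}):f_j$ in $M$ are literally the same ideal of $S$. In the end both proofs use the same closing mechanism: a short exact sequence of modules generated in degree $k$ whose outer terms have $k$-linear resolutions forces the middle term to have one as well.

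Two small points to tighten. The proposed alternative sequence $0\to N\cap M_{\langle k\rangle}\to M_{\langle k\rangle}\oplus N\to\cdots$ is out of place; the sequence you actually want is exactly $0\to N_{\langle k\rangle}\to M_{\langle k\rangle}\to (M/N)_{\langle k\rangle}\to 0$. To see this is exact, note that $N\cap M_{\langle k\rangle}=N_{\langle k\rangle}$ once $k\geq e_1$ (since $N$ is generated in degree $e_1\leq k$, hence $N_{\langle k\rangle}=N_{\geq k}$, which already contains every degree $\geq k$ element of $N$), and that $M_{\langle k\rangle}\to (M/N)_{\langle k\rangle}$ is onto because $(M/N)_k$ is the image of $M_k$. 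Second, the phrase ``whose degrees are $\leq$'' when describing the generators of $M/N$ is a slip: each $\bar f_i$ is either zero or has the same degree as $f_i$. This is harmless, though, since the definition of linear quotients tolerates colon ideals equal to $S$, so any zero generators in the sequence for $M/N$ cause no trouble and the number of distinct degrees of the nonzero images is strictly smaller.
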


\begin{proof}
Set $M_0=(0)$, $M_j=(f_1,\ldots,f_j)$ for $1\leq j\leq m$, $I_j=M_{j-1}:f_j$ and $d_j=\deg f_j$. We prove the theorem for $M_j$ by induction on $j$. Let $j=1$. Then $M_1=S/I_1$.
By assumption $I_1$ is generated by linear forms and then $M_1$ has linear resolution. Consider the exact sequence of graded $S$-modules
$$0 \rightarrow M_{j-1}\rightarrow M_j\rightarrow (S/I_j)(-d_j)\rightarrow 0.$$
By induction we may assume that $M_{j-1}$ is componentwise linear. Let $k<d_j$. Then $(M_j)_{\langle k\rangle}=(M_{j-1})_{\langle k\rangle}$. Therefore by induction
$(M_j)_{\langle k\rangle}$ has linear resolution for $k<d_j$. The above exact sequence gives rise to the exact sequence

\begin{equation}\label{1}
0\rightarrow (M_{j-1})_{\langle d_j\rangle}\rightarrow (M_j)_{\langle d_j\rangle}\rightarrow (S/I_j)(-d_j)\rightarrow 0.
\end{equation}

By induction hypothesis $(M_{j-1})_{\langle d_j\rangle}$ has linear resolution. Also $S/I_j$ has a linear resolution, as discussed above. So
the exact sequence (\ref{1}) shows that $(M_j)_{\langle d_j\rangle}$ has linear resolution.
Since $d_j$ is the highest degree of the generators of $M_{j}$, it follows that $(M_j)_{\langle k\rangle}=\mm^{k-d_j}(M_j)_{\langle d_j\rangle}$ for $k\geq d_j$. This shows that $M_j$ is componentwise linear.
\end{proof}

For ideals with linear quotients the graded Betti numbers have been  determined in terms of the linear quotients \cite[Corollary 2.7]{SV}. Following  the same line of arguments one can prove a similar formula for modules with linear quotients. For the convenience of the reader we give the complete proof here.

\begin{Proposition}
\label{generalbetti}
Let $M$ be a finitely generated graded $S$-module admitting  linear quotients with respect to $f_1,\ldots,f_m$, and assume that  $f_1,\ldots,f_m$ is a minimal system of generators of $M$ and that  $\deg f_1 \leq \deg f_2 \leq \cdots\leq \deg f_m $. Let  $M_0=(0)$  and $M_j=(f_1,\ldots,f_j)$ for $1\leq j\leq m$. Furthermore, let  $I_j=M_{j-1}:f_j$  and  $\mu_j=\mu(I_j)$ for $j=1,\ldots,m$. Then
\begin{enumerate}
\item[{\em (a)}] $\beta_{i,i+j}(M)=\sum_{k,\deg(f_k)=j}{\mu_k\choose i}$.
\item[{\em (b)}] $\projdim M=\max\{\mu_k\:\; k=1,\ldots,m\}$.
\item[{\em (c)}] $\reg(M)=\deg(f_m)$.
\end{enumerate}
\end{Proposition}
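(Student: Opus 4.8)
The plan is to mimic the proof of the corresponding statement for ideals due to Sharifan and Varbaro, using the short exact sequences coming from the filtration $M_0\subset M_1\subset\cdots\subset M_m=M$ and inducting on $j$. The key tool is the short exact sequence of graded $S$-modules
\[
0\longrightarrow M_{j-1}\longrightarrow M_j\longrightarrow (S/I_j)(-d_j)\longrightarrow 0,
\]
where $d_j=\deg f_j$. Since $I_j$ is generated by linear forms, $S/I_j$ has a linear (Koszul) resolution, so its graded Betti numbers are $\beta_{i,i}(S/I_j)=\binom{\mu_j}{i}$ and hence $\beta_{i,i+d_j}\big((S/I_j)(-d_j)\big)=\binom{\mu_j}{i}$. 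The inductive hypothesis gives the Betti numbers of $M_{j-1}$, and I need to show that the connecting maps in the long exact sequence of $\Tor$ all vanish, so that the Betti numbers simply add up, yielding (a). The point where minimality and the non-decreasing degree hypothesis enter is exactly here: because $f_1,\dots,f_m$ is a minimal generating set with $\deg f_1\le\cdots\le\deg f_m$, the module $M_{j-1}$ is generated in degrees $\le d_j$, and $\Tor_i(M_{j-1},K)$ lives in internal degrees that cannot coincide with the internal degree $i+d_j$ in which $\Tor_{i-1}\big((S/I_j)(-d_j),K\big)$ is concentrated — more precisely one checks the regularity bound $\reg M_{j-1}\le d_{j-1}\le d_j$ by the same induction, so $\Tor_i(M_{j-1},K)_{i+d_j}=0$ unless we are in the borderline situation, and even there a degree count rules out a nonzero connecting homomorphism $\Tor_i\big((S/I_j)(-d_j),K\big)\to\Tor_{i-1}(M_{j-1},K)$. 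Hence the long exact sequence splits into short exact sequences of $K$-vector spaces in each homological degree, and
\[
\beta_{i,i+k}(M_j)=\beta_{i,i+k}(M_{j-1})+\beta_{i,i+k}\big((S/I_j)(-d_j)\big),
\]
which gives (a) after summing over all $j$ with $\deg f_k=j$.

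Parts (b) and (c) then follow formally from (a). For (b), $\projdim M=\max\{i:\beta_{i,j}(M)\neq 0\text{ for some }j\}$, and by (a) this is $\max\{i:\binom{\mu_k}{i}\neq 0\text{ for some }k\}=\max\{\mu_k:k=1,\dots,m\}$, using that $\binom{\mu_k}{i}\neq 0$ exactly when $i\le\mu_k$ (and $\mu_k\ge 1$ for at least one $k$ unless $M$ is free, in which case the formula still reads $0$). For (c), $\reg M=\max\{j:\beta_{i,i+j}(M)\neq 0\text{ for some }i\}$; by (a) the internal shift $j$ that occurs is precisely a value of $\deg f_k$, and the largest such is $\deg f_m$ by the ordering hypothesis. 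One should also note along the way that the same argument proves $\reg M_j=d_j$ for every $j$, which is what feeds the induction in the vanishing-of-connecting-maps step.

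I expect the main obstacle to be the careful verification that all the connecting homomorphisms in the $\Tor$ long exact sequences vanish — equivalently, that the short exact sequences above remain exact after applying $-\otimes_S K$ and passing to Betti numbers. This is the place where one genuinely uses both that $f_1,\dots,f_m$ is a \emph{minimal} generating set (so that the generators of $M_j$ in degree $d_j$ really do contribute new Betti numbers $\beta_{0,d_j}$ and the map $M_{j-1}\to M_j$ is "minimal") and that the degrees are non-decreasing (so that $\reg M_{j-1}\le d_j$, preventing any degree overlap that would allow a nonzero connecting map). Concretely one invokes Proposition~\ref{componentwise} to know $M_{j-1}$ is componentwise linear, deduces the regularity bound, and then does a bidegree bookkeeping in the long exact sequence; this is routine but is the technical heart of the argument. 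Everything else is homological-algebra formalism and the binomial identity for Betti numbers of a linear resolution.
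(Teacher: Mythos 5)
Your proof is correct, but it takes a genuinely different route from the paper's. You work directly with the filtration $M_0\subset M_1\subset\cdots\subset M_m$ and show that the full long exact Tor sequence associated to $0\to M_{j-1}\to M_j\to (S/I_j)(-d_j)\to 0$ degenerates, by bounding $\reg M_{j-1}$ inductively via part~(c). The paper instead first invokes the Betti-number formula for componentwise linear modules, $\beta_{i,i+j}(M)=\beta_i(M_{\langle j\rangle})-\beta_i(\mm M_{\langle j-1\rangle})$ (\cite[Proposition 8.2.13]{HH}), and then proves the recursion at the level of the \emph{degree-$j$ components}: it works with the exact sequences $0\to (M_{c-1})_{\langle j\rangle}\to (M_c)_{\langle j\rangle}\to (S/I_c)(-j)\to 0$ for $c$ ranging over the generators of degree exactly $j$, and only needs to inspect the single graded piece $\Tor_i(\cdot)_{i+j}$, where both flanking terms vanish for trivial degree reasons (each of these modules has $j$-linear resolution). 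Your approach has the advantage that it bypasses \cite[Proposition 8.2.13]{HH} entirely and proves (a), (b), (c) simultaneously in one induction on $j$; the cost is that you must carry the regularity estimate $\reg M_{j-1}=d_{j-1}$ along as part of the induction, whereas the paper's approach keeps the induction entirely within fixed internal degree and so never needs a regularity bound at all.

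One small piece of bookkeeping in your write-up is off: you say $\Tor_{i-1}\bigl((S/I_j)(-d_j),K\bigr)$ is concentrated in degree $i+d_j$ and worry about a ``borderline'' case when $d_{j-1}=d_j$. In fact $\Tor_{i-1}$ of a $d_j$-linear module is concentrated in degree $(i-1)+d_j$, and the connecting map $\Tor_{i+1}\bigl((S/I_j)(-d_j),K\bigr)\to\Tor_i(M_{j-1},K)$ lands in degree $(i+1)+d_j=i+(d_j+1)$. Since $\reg M_{j-1}=d_{j-1}\le d_j<d_j+1$, this target piece of $\Tor_i(M_{j-1},K)$ vanishes unconditionally, and similarly for the other connecting map $\Tor_i\bigl((S/I_j)(-d_j),K\bigr)\to\Tor_{i-1}(M_{j-1},K)$, whose image would sit in degree $i+d_j=(i-1)+(d_j+1)$. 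So there is no borderline case to rule out; the strict inequality $d_j+1>d_{j-1}$ already does all the work. Once this is straightened out, the rest of your argument — the recursion, the Koszul Betti numbers $\binom{\mu_j}{i}$ for $S/I_j$, the derivation of (b) and (c) from (a) — is exactly right.
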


\begin{proof}
It suffices to prove (a). The statements (b) and (c) follow directly from (a). By  Proposition~\ref{componentwise},  $M$ is  componentwise linear. Therefore  it follows from \cite[Proposition 8.2.13]{HH}  that
\begin{eqnarray}
\label{formula}
\beta_{i,i+j}(M)=\beta_i(M_{\langle j\rangle})-\beta_i(\mm M_{\langle j-1\rangle}).
\end{eqnarray}
In  \cite[Proposition 8.2.13]{HH}  this statement is actually given for componentwise linear ideals. But the proof for componentwise linear modules is word by word the same.

Assume first that there is no $f_k$ with $\deg(f_k)=j$. Then $M_{\langle j\rangle}=\mm M_{\langle j-1\rangle}$, and by (\ref{formula}) we have $\beta_{i,i+j}(M)=0$, in  accordance with the formula in (a).

Next assume that there is some  $f_k$ with $\deg(f_k)=j$.
Let $a=\min\{k\:\; \deg(f_k)=j\}$ and $b=\max\{k\:\; \deg(f_k)=j\}$. We show that
\begin{eqnarray}
\label{recursive}
\beta_i(M_{\langle j\rangle})=\beta_i(\mm M_{\langle j-1\rangle})+ \sum_{k=a}^b{\mu_k\choose i}.
\end{eqnarray}
This together with (\ref{formula}) yields then the desired conclusion.

For any integer $c$ with $a\leq c\leq b$, if $M_c=(f_1,\ldots,f_c)$, then
\[
(M_c)_{\langle j\rangle} =\mm M_{\langle j-1 \rangle}+ (f_a,\ldots,f_c).
\]
Since $M_c$ is generated by the linear quotient sequence $f_1,\ldots,f_c$, each $M_c$ is componentwise linear by  Propostion~\ref{componentwise}. In particular,
$(M_{c-1})_{\langle j\rangle}$ and $(M_{c})_{\langle j\rangle}$ have $j$-linear resolution, and also $(S/I_c)(-j)$ has $j$-linear resolution, and we get a short  exact sequence of modules
\[
0\to (M_{c-1})_{\langle j\rangle}\to (M_{c})_{\langle j\rangle}\to (S/I_c)(-j)\to 0.
\]
Since $$\Tor_{i+1}(K,(S/I_c)(-j))_{i+1+(j-1)}=\Tor_{i-1}(K,(M_{c-1})_{\langle j\rangle})_{i-1 +(j+1)}=0,$$
the long exact  sequence arising from the above short exact sequence gives us the short exact sequence
\[
0\to \Tor_i(K,(M_{c-1})_{\langle j\rangle})_{i+j}\to \Tor_i(K,(M_{c})_{\langle j\rangle})_{i+j}\to \Tor_i(K,(S/I_c)(-j))_{i+j}\to 0.
\]
Since $\dim_K \Tor_i(K,(S/I_c)(-j))_{i+j} ={\mu_c\choose i}$, see that for all $c=a,\ldots, b$ we have
\[
\beta_{i,i+j}((M_{c})_{\langle j\rangle})=\beta_{i,i+j}((M_{c-1})_{\langle j\rangle})+{\mu_c \choose i}.
\]
This together with the equality $M_{\langle j\rangle}=(M_{b})_{\langle j\rangle}$ implies (\ref{recursive}), and completes  the proof.
\end{proof}

Fix any monomial order $<$ on $S$ and let $F$ be a free $S$-module with a
basis $e_1,\ldots,e_m$. We define a monomial order on $F$ by setting $ue_i < ve_j$
if $i < j$, or $i = j$ and $u < v$, where $u$ and $v$ are monomials of $S$.

\begin{Proposition}\label{linear quotients}
Let $M$ be a finitely generated graded $S$-module and  $f_1,\ldots,f_m$ be a minimal system of homogeneous generators of $M$. Let $F=\bigoplus_{i=1}^m Se_i$ be the free $S$-module with basis $e_1,\ldots,e_m$. Let $\varphi:F\rightarrow M$ be the homomorphism of $S$-modules with $\varphi(e_i)=f_i$ and $C=\Ker(\varphi)$. Fix an order $<$ on $F$ induced by $e_1<\cdots<e_m$ and set $I_j=(f_1,\ldots,f_{j-1}):f_j$ for $1\leq j\leq m$. Then
\begin{enumerate}
  \item[{\em (a)}] $\ini_{<}(C)=\bigoplus_{j=1}^m \ini_{<}(I_j)e_j$.
  \item [{\em (b)}] If the generators of $\ini_{<}(C)$ are of the form $ue_i$ with $\deg u \leq 1$, then $M$ has linear quotients.
\end{enumerate}
\end{Proposition}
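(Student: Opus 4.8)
The plan is to prove part (a) directly and then read off part (b) from it together with one standard fact about initial ideals. For (a), the point is that the order on $F$ gives absolute priority to the position: $ue_i<ve_j$ whenever $i<j$. Hence for any nonzero $g=\sum_{i=1}^m g_ie_i\in F$, writing $j$ for the largest index with $g_j\ne 0$, one has $\ini_<(g)=\ini_<(g_j)\,e_j$, since every term of $g$ in a position $<j$ is smaller than every term in position $j$, and there are no terms in positions $>j$. I would use this twice. For the inclusion $\ini_<(C)\subseteq\bigoplus_{j}\ini_<(I_j)e_j$ it suffices to treat generators, i.e.\ elements $\ini_<(g)$ with $0\ne g\in C$; with $j$ as above, the relation $\varphi(g)=0$ reads $g_jf_j=-\sum_{i<j}g_if_i\in(f_1,\dots,f_{j-1})$, so $g_j\in I_j$ and $\ini_<(g)=\ini_<(g_j)e_j\in\ini_<(I_j)e_j$. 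For the reverse inclusion, I would take a monomial $u$ of $\ini_<(I_j)$, write $u=\ini_<(h)$ with $h\in I_j$, pick $h_i\in S$ with $hf_j=\sum_{i<j}h_if_i$, and observe that $g:=he_j-\sum_{i<j}h_ie_i$ lies in $C$ with largest occupied position $j$ (because $h\ne 0$), so $\ini_<(g)=ue_j\in\ini_<(C)$. No cancellation issues arise in either direction because position $j$ carries a single term, regardless of what happens in the lower positions.

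For part (b), by (a) the monomial submodule $\ini_<(C)=\bigoplus_{j=1}^m\ini_<(I_j)e_j$ of $F$ has minimal monomial generating set equal to the union, over $j$, of the sets $\{\,ue_j : u\in G(\ini_<(I_j))\,\}$ (a monomial $ue_j$ and a monomial $ve_i$ with $i\ne j$ never divide one another). So the hypothesis — every generator of $\ini_<(C)$ has the form $ue_i$ with $\deg u\le 1$ — says exactly that for each $j$ the monomial ideal $\ini_<(I_j)$ is generated in degrees $\le 1$ (a degree-$0$ generator meaning $\ini_<(I_j)=S$). What remains, and this is the only substantive step, is to promote this to the corresponding statement about $I_j$ itself: that $I_j=S$ or $I_j$ is generated by linear forms, which is precisely the assertion that $M$ has linear quotients with respect to $f_1,\dots,f_m$.

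The main (and rather mild) obstacle is therefore the implication ``$\ini_<(I)$ generated in degrees $\le 1$ $\Rightarrow$ $I$ generated in degrees $\le 1$'' for the homogeneous ideal $I=I_j$. I would argue as follows. Let $L\subseteq I$ be the ideal generated by all homogeneous elements of $I$ of degree $\le 1$. Given $u\in G(\ini_<(I))$, choose $h\in I$ with $\ini_<(h)=u$ and replace $h$ by its homogeneous component of degree $\deg u$; this component still lies in $I$, is nonzero, and still has initial monomial $u$, so it lies in $L$ and $u\in\ini_<(L)$. Thus $\ini_<(I)\subseteq\ini_<(L)\subseteq\ini_<(I)$, whence $\ini_<(L)=\ini_<(I)$, and since $L\subseteq I$ this forces $L=I$; consequently $I$ is generated in degrees $\le1$, and if it has a generator of degree $0$ then $I=S$. (Alternatively, one can simply invoke the comparison of graded Betti numbers $\beta_{1,d}(S/I)\le\beta_{1,d}(S/\ini_<(I))$, cf.\ \cite{HH}, which vanishes for $d\ge 2$ under our hypothesis, so $I$ has no minimal generator of degree $\ge 2$.) Applying this to each $I_j$ finishes the proof of (b).
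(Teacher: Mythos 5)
Your proof is correct and follows essentially the same route as the paper: for (a) both directions exploit that the order on $F$ is position-dominant, so $\ini_<(g)$ comes entirely from the top occupied coordinate; for (b) one passes from $\ini_<(I_j)$ being generated by variables to $I_j$ being generated by linear forms. The only (cosmetic) difference is in that last step: the paper argues via the reduced Gr\"obner basis of $I_j$, whose elements are homogeneous with linear leading terms and hence are themselves linear forms and generate $I_j$, whereas you prove the general fact that $\ini_<(I)$ generated in degrees $\le 1$ forces $I$ to be so generated (via the subideal $L$ with $\ini_<(L)=\ini_<(I)$, or the Betti-number comparison). Both are standard one-line Gr\"obner basis arguments; the paper's is marginally more direct, yours is marginally more general.
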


\begin{proof}
(a) It is obvious that $\bigoplus_{j=1}^m \ini_{<}(I_j)e_j\subseteq \ini_{<}(C)$.
Conversely, let $ue_j\in \ini_{<}(C)$, where $u$ is a monomial. Then by the definition of the monomial order, there exist homogeneous polynomials $g,g_1,\ldots,g_{j-1}\in S$ such that $ge_j-\sum_{\ell=1}^{j-1} g_{\ell}e_{\ell}\in C$ and $\ini(g)=u$. Since $gf_j=\sum_{\ell=1}^{j-1} g_{\ell}f_{\ell}$, it follows that $g\in I_j$. Thus $ue_j\in \ini_{<}(I_j)e_j$.

(b) The hypothesis together with (a) implies that $\ini(I_j)$ is generated by variables. Therefore the reduced Gr\"{o}bner basis of $I_j$ is
generated by linear forms, because the leading terms of the reduced Gr\"{o}bner basis are the generators of $\ini(I_j)$ and the reduced Gr\"{o}bner basis consists of homogeneous polynomials. Since the reduced Gr\"{o}bner basis of $I_j$ contains the minimal set of generators of $I_j$, the ideal $I_j$ is generated by linear forms.
\end{proof}

\section{The $x$-condition}

Let $K$ be a field and $A=\Dirsum_{i,j}A_{(i,j)}$ be a bigraded $K$-algebra with $A_{(0,0)}=K$. Set $A_j=\Dirsum_iA_{(i,j)}$. Then $A=\Dirsum_jA_j$ has the structure of a graded $A_0$-algebra, where $A_0$ is a graded $K$-algebra and each $A_j$ is a graded $A_0$-module with grading  $(A_j)_i=A_{(i,j})$ for all $i$.

We now require in addition that
\begin{enumerate}
\item[(i)] $A_0$ is the polynomial ring $S= K[x_1,\ldots,x_n]$ with the standard grading;

\item[(ii)] $A_1$  is finitely generated and $(A_1)_i=0$ for $i<0$;

\item[(iii)] $A=S[A_1]$, i.e., $A_i=A_1^i$ for all $i\geq 1$.
\end{enumerate}

We fix a system of homogeneous generators $f_1,\ldots,f_m$ of $A_1$ with $\deg f_i=d_i$ for $i=1,\ldots,m$.

Let $T=K[x_1,\ldots,x_n,y_1,\ldots,y_m]$  be the bigraded polynomial ring   over $K$ in the indeterminates $x_1,\ldots, x_n, y_1,\ldots, y_m$ with
$\deg x_i=(1,0)$ for $i=1,\ldots,n$ and $\deg y_j=(d_j,1)$  for  $j=1,\ldots,m$.

We define the $K$-algebra homomorphism $\varphi\: T\to A$ with $\varphi(x_i)=x_i$ for $i=1,\ldots,n$ and $\varphi(y_j)=f_j$ for $j=1,\ldots, m$.  Then $\varphi$ is a surjective $K$-algebra homomorphism of bigraded $K$-algebras, and hence  $J=\Ker(\varphi)$ is a bigraded ideal in $T$.

We say that the defining ideal $J$ of $A$ satisfies the {\em  $x$-condition}, with respect to a monomial order $<$ on $T$  if all $u\in G(\ini_<(J))$ are of the form $vw$ with $v\in S$ of degree $\leq 1$ and
$w\in K[y_1,\ldots,y_m]$. Here $G(I)$ denotes the minimal set of monomial generators of the monomial ideal $I$.

An important example of a bigraded  algebra $A$ as above is,  the Rees ring $\R(I)=\Dirsum_{k\geq 0}I^kt^k$ of a graded ideal. In that case, $A_1=It$, and if $I=(f_1,\ldots, f_m)$ with $\deg f_i=d_i$, then $A_1$ is generated by $f_1t,\ldots,f_mt$ with $\deg f_it=(d_i,1)$ for all $i$.

In \cite[Corollary 1.2]{HHZ} it is shown that if $I$ is generated in a single degree and if the defining ideal $J$ of $\R(I)$ satisfies the $x$-condition, then all powers of $I$ have linear resolution.

With exactly the same proof one obtains the following more general result.

\begin{Theorem}
\label{exactlythesame}
Suppose that all generators of $A_1$ have the same degree and that the defining ideal $J$ of $A$ satisfies the $x$-condition. Then  $A_k$ has linear resolution for all $k\geq 1$.
\end{Theorem}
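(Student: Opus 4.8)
The natural strategy is to reduce the claim to the known equigenerated case, namely \cite[Corollary 1.2]{HHZ} (in its formulation for a general bigraded algebra $A$ satisfying (i)--(iii)), by exploiting that all $f_i$ have the same degree $d$. First I would observe that since $\deg f_i=d$ for all $i$, the element $A_1$ is generated in the single bidegree $(d,1)$, so that $A_k=A_1^k$ is generated by the products $f_{i_1}\cdots f_{i_k}$, all of which have degree $kd$. Thus $A_k$ is generated in the single degree $kd$, and to prove $A_k$ has linear resolution it suffices to produce a system of generators of $A_k$ with linear quotients, all of degree $kd$, by the well-known fact quoted in Section~1. The bulk of the work is therefore to extract such a linear quotient sequence from the $x$-condition.

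Next I would run the same combinatorial argument that underlies \cite[Theorem 10.1.9]{HH}, adapted to $A$. Using the Gr\"obner basis of $J$ satisfying the $x$-condition, one builds, for each $k$, an explicit ordering of the monomials $y_{i_1}\cdots y_{i_k}$ (equivalently, of a spanning set of $A_k$ obtained by applying $\varphi$), say $g_1,\ldots,g_N$, with the property that for each $\ell$ the colon module $(g_1,\ldots,g_{\ell-1}):g_\ell$ inside $A_k$ is generated by linear forms in $S$. The mechanism is: a relation expressing some $x_t\cdot g_\ell$ (or, more generally, a product of generators that reduces) in terms of earlier $g$'s comes precisely from an element of $G(\ini_<(J))$ of the form $x_t\, w$ with $w$ a monomial in the $y$'s; the $x$-condition guarantees these relations involve only a single variable $x_t$, hence only linear forms enter the colon modules. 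This is exactly where the hypothesis that $v$ has degree $\le 1$ in the $x$-condition is used, and where the careful choice of the monomial order on $T$ (inducing a compatible order on the generators of $A_k$) matters.

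Finally, since in the present situation all the generators $g_1,\ldots,g_N$ are of the same degree $kd$, the degree hypothesis of Proposition~\ref{componentwise} is automatically satisfied (indeed trivially, $\deg g_1=\cdots=\deg g_N=kd$), and a module generated in a single degree with linear quotients with respect to generators all of that degree has a linear resolution. Hence $A_k$ has linear resolution for every $k\ge 1$, as claimed. The main obstacle is the second step: constructing the linear quotient sequence for $A_k$ and verifying the colon condition directly from the shape of $G(\ini_<(J))$; all the subtlety lies in organizing the monomials $y_{i_1}\cdots y_{i_k}$ so that the reductions dictated by the Gr\"obner basis always express a later generator's multiple in terms of earlier ones using only the permitted single $x$-variable, and this is precisely the computation carried out in the proof of \cite[Theorem 10.1.9]{HH} that we are invoking as being ``exactly the same''.
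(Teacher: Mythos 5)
Your proposal takes essentially the same route as the paper: the paper simply asserts that Theorem~\ref{exactlythesame} follows ``with exactly the same proof'' as \cite[Corollary 1.2]{HHZ}, which is the linear-quotients argument you sketch (and which is also the engine behind Theorem~\ref{main}). Your observation that equigeneration makes all the constructed generators of $A_k$ have the common degree $kd$, so that linear quotients alone (with no minimality or degree-ordering hypothesis) already yields a linear resolution, is precisely the reduction the paper has in mind.
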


Next we turn to case that not all generators of $A_1$ may have the same degree.

Let $<$  be a monomial order on $T$. A monomial $u$ in $T$
is called {\em standard with respect to} $<$, if $u\not\in  \ini_<(J)$.

\begin{Lemma}
\label{standard}
Fix a monomial order $<$ on $T$ and an integer $k\geq 1$,  and let $\{w_1, \ldots, w_r\}$ be the set of monomials of degree $k$ in the variables $y_1, \ldots,y_m$ in $T$. We may assume that $w_1<w_2 <\cdots <w_r$.
Let $w_i$ be a non-standard monomial. Then there exist standard monomials $w_{j_1},\ldots,w_{j_t}$   with $j_1<\cdots<j_t<i$ and homogeneous polynomials $p_{\ell}\in S$ such that $w_i-\sum_{\ell=1}^t p_{\ell} w_{j_{\ell}}\in J$.

Moreover,  the set $$\{\varphi(w_i):\ w_i \ \textrm{is  standard}\}$$ is a  system of homogeneous  generators of $A_k$.
\end{Lemma}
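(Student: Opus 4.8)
The plan is to reduce everything to Gröbner basis normal forms. Fix the monomial order $<$ on $T$ and let $w_i = y_{a_1}\cdots y_{a_k}$ be a monomial of degree $k$ in the $y$-variables. If $w_i$ is non-standard, then $w_i \in \ini_<(J)$, so dividing $w_i$ by the reduced Gröbner basis $\mathcal{G}$ of $J$ produces a remainder $\rho$ with $w_i - \rho \in J$ and $\rho$ a $K$-linear combination of standard monomials. First I would argue that, because $J$ is bigraded and $w_i$ has bidegree $(\sum d_{a_\ell}, k)$, every monomial occurring in $\rho$ also has bidegree $(\sum d_{a_\ell}, k)$; in particular each such monomial has $y$-degree exactly $k$, hence is of the form $u w_j$ with $u$ a monomial in $S$ and $w_j$ a monomial of degree $k$ in the $y$'s. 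Collecting terms with the same $w_j$, we obtain $w_i - \sum_\ell p_\ell w_{j_\ell} \in J$ with $p_\ell \in S$ homogeneous and each $w_{j_\ell}$ standard.

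Next I would check the ordering claim $j_\ell < i$. Here I would use that the reduced Gröbner basis reduction only ever replaces the current leading term by strictly smaller monomials: starting from $w_i$, each reduction step lowers the leading monomial, so every monomial appearing in the final remainder $\rho$ — in particular each $u w_{j_\ell}$ — is $\leq w_i$, and since these are distinct monomials (none equals $w_i$, as $w_i$ is non-standard while they are standard) we get $u w_{j_\ell} < w_i$. It remains to pass from $u w_{j_\ell} < w_i$ to $w_{j_\ell} < w_i$. This is where I would need to be a little careful: it uses that $<$ restricted to the $y$-variables is compatible in the sense that $u w_{j_\ell} < w_i$ forces $w_{j_\ell} \le w_i$ — this holds because $w_i$ is a pure $y$-monomial, so if $w_{j_\ell} > w_i$ then $u w_{j_\ell} \ge w_{j_\ell} > w_i$, a contradiction. (If $w_{j_\ell} = w_i$ then $w_{j_\ell}$ is standard and $w_i$ is not, impossible.) Hence $w_{j_\ell} < w_i$, i.e. $j_\ell < i$ after sorting $w_1 < \cdots < w_r$. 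I expect this compatibility argument to be the main obstacle — it is the one place where one must use that the $w_i$ are pure $y$-monomials rather than arbitrary monomials.

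For the ``moreover'' statement: by assumption (iii), $A_k = A_1^k$ is generated as an $S$-module by the products $f_{a_1}\cdots f_{a_k} = \varphi(w_i)$ over all degree-$k$ monomials $w_i$ in the $y$'s. So it suffices to show each $\varphi(w_i)$ lies in the $S$-submodule generated by $\{\varphi(w_j): w_j\ \text{standard}\}$. If $w_i$ is standard this is trivial. If $w_i$ is non-standard, apply $\varphi$ to the relation $w_i - \sum_\ell p_\ell w_{j_\ell} \in J = \ker\varphi$ to get $\varphi(w_i) = \sum_\ell \varphi(p_\ell)\varphi(w_{j_\ell}) = \sum_\ell p_\ell \varphi(w_{j_\ell})$, where now $p_\ell \in S$ acts by scalar multiplication; since each $w_{j_\ell}$ is standard, $\varphi(w_i)$ is in the desired submodule. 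Homogeneity is automatic since $\varphi$ is a homomorphism of bigraded algebras and each $w_i$ is homogeneous of degree $(\sum d_{a_\ell}, k)$, so $\varphi(w_i)$ is homogeneous in $A_k$ of degree $\sum d_{a_\ell}$. This completes the proof.
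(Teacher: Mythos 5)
Your proof is correct and takes essentially the same route as the paper's: both rest on the bihomogeneity of $J$ (so every element with $y$-degree $k$ is an $S$-linear combination of the $w_\ell$) and on the compatibility observation that $u w_{j_\ell} < w_i$ with $u\in S$ a monomial forces $w_{j_\ell} < w_i$. The only presentational difference is that the paper carries out the reduction to standard monomials by an explicit induction on the non-standard $w_{s_a}$ in increasing order, whereas you invoke the division-algorithm normal form with respect to the reduced Gr\"obner basis as a black box; you also spell out the compatibility step that the paper leaves implicit.
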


\begin{proof}
 Let  $w_{s_1},\ldots,w_{s_d}$ be all the non-standard monomials of degree $k$, where $s_1<\cdots<s_d$. We prove the first part of the lemma for the monomials $w_{s_a}$ by induction on $a$. Let $a=1$.   Then there is a polynomial $w_{s_1}-\sum_{\ell=1}^{s_1-1}  q_{\ell} w_{{\ell}}\in J$ with the initial term $w_{s_1}$, where $q_{\ell}$ are homogeneous polynomials in $S$. By our assumption on $s_1$, all $w_{{\ell}}$ are standard for $\ell<s_1$.

Now let $a>1$.   Since $w_{s_a}$ is non-standard,  there is a polynomial
$w_{s_a}-\sum_{\ell=1}^{s_a-1}  q'_{\ell} w_{{\ell}}\in J$ with the initial term $w_{s_a}$, where $q'_{\ell}$ are homogeneous polynomials in $S$.
By induction we may assume that any $w_{s_i}$ with $i<a$ in this sum  can be replaced modulo $J$ by $\sum_{\ell=1}^{s_i-1}  q_{i,\ell} w_{{\ell}}$, where $q_{i,\ell}=0$ when $w_{{\ell}}$ is non-standard. This proves the first part of the lemma.

Note that $\{\varphi(w_1),\ldots, \varphi(w_r)\}$ is a system of homogeneous  generators of $A_k$.
Suppose $w_i$ is a non-standard monomial. Then $w_i-\sum_{\ell=1}^t p_{\ell} w_{j_{\ell}}\in J$,  as stated in the lemma. This means that $\varphi(w_i)=\sum_{\ell=1}^t p_{\ell} \varphi(w_{j_{\ell}})$, which proves the second part of the lemma.
\end{proof}

Let $<'$ denote a monomial  order on $K[y_1,\ldots,y_m]$, $<_x$ be a monomial order on $K[x_1,\ldots,x_n]$  and  let $<$ be an order on $T$ such that
\begin{eqnarray}
\label{monomialorder}
\prod_{i=1}^n x_i^{a_i}\prod_{i=1}^m y_i^{b_i} &<& \prod_{i=1}^n x_i^{a'_i} \prod_{i=1}^m y_i^{b'_i},
\end{eqnarray}
if
\begin{eqnarray*}
\prod_{i=1}^m y_i^{b_i} <'\prod_{i=1}^m y_i^{b'_i}\quad \text{or}\quad  \prod_{i=1}^m y_i^{b_i}&=&\prod_{i=1}^m y_i^{b'_i}\quad  \text{and} \quad
 \prod_{i=1}^n x_i^{a_i} <_x \prod_{i=1}^n x_i^{a'_i}.
\end{eqnarray*}

The following theorem generalizes \cite[Theorem 10.1.9]{HH}.

\begin{Theorem}\label{main}
We maintain the  notation and the   assumptions introduced before, and suppose that  $J$ satisfies the $x$-condition with respect to the monomial order defined in  {\em (\ref{monomialorder})}.  Then for all $k\geq 1$, the $S$-module  $A_k$ has linear quotients.
\end{Theorem}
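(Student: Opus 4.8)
The plan is to exhibit, for each $k\ge 1$, a system of generators of the $S$-module $A_k$ together with a monomial order for which Proposition~\ref{linear quotients} applies, and then to read off the $x$-condition on the minimal monomial generators of the relevant initial module. Let $w_1,\dots,w_r$ be the monomials of degree $k$ in the variables $y_1,\dots,y_m$, numbered so that $w_1<'w_2<'\cdots<'w_r$, and set $g_i=\varphi(w_i)$. Since $A_k=(A_1)^k$ is generated over $S$ by the $k$-fold products of $f_1,\dots,f_m$, the elements $g_1,\dots,g_r$ generate the $S$-module $A_k$ (by Lemma~\ref{standard} already the $g_i$ with $w_i$ standard suffice, which gives the more economical sequence alluded to in the introduction). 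I will show that $A_k$ has linear quotients with respect to $g_1,\dots,g_r$.

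Consider the $S$-submodule $T_{(\bullet,k)}=\Dirsum_i T_{(i,k)}$ of $T$ consisting of the elements of $y$-degree $k$; it is free over $S$ with basis $w_1,\dots,w_r$, and, $\varphi$ being bigraded, the restriction of $\varphi$ is a surjection $T_{(\bullet,k)}\to A_k$, $w_i\mapsto g_i$, with kernel $C=J\cap T_{(\bullet,k)}$. Identify $T_{(\bullet,k)}$ with the free $S$-module $F=\Dirsum_{i=1}^r Se_i$ via $w_i\leftrightarrow e_i$. The crucial observation is that, under this identification, the product order (\ref{monomialorder}) restricts to precisely the monomial order on $F$ appearing in Proposition~\ref{linear quotients}, with $<_x$ in the role of the order on $S$: two monomials $uw_i$ and $u'w_j$ with $u,u'\in S$ are compared first according to $w_i$ versus $w_j$, i.e.\ according to $i$ versus $j$, and, when $i=j$, according to $u$ versus $u'$ in $<_x$. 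Furthermore, since $J$ is bigraded, $\ini_<(C)=\ini_<(J)\cap T_{(\bullet,k)}$.

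It remains to verify that the minimal monomial generators of $\ini_<(C)=\ini_<(J)\cap T_{(\bullet,k)}$ all have the form $ue_i$ with $\deg u\le 1$; Proposition~\ref{linear quotients}(b) — whose proof, like that of its part (a), makes no use of the minimality of the chosen generators — then yields that $A_k$ has linear quotients. So let $uw_i$, with $u$ a monomial of $S$, be a minimal monomial generator of $\ini_<(J)\cap T_{(\bullet,k)}$. Being a monomial belonging to $\ini_<(J)$, it is divisible by some $g\in G(\ini_<(J))$, and by the $x$-condition $g=vw$ with $v\in S$ of degree $\le 1$ and $w\in K[y_1,\dots,y_m]$; divisibility forces $v\mid u$ and $w\mid w_i$. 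If $v=1$, then $w\mid w_i$ yields $w_i\in\ini_<(J)$, so $w_i\in\ini_<(J)\cap T_{(\bullet,k)}$ and minimality of $uw_i$ forces $u=1$. If $v=x_\ell$ for some $\ell$, then $x_\ell\mid u$; and since $w\mid w_i$ we also have $x_\ell w_i\in\ini_<(J)$, hence $x_\ell w_i\in\ini_<(J)\cap T_{(\bullet,k)}$, so minimality of $uw_i$ forces $u=x_\ell$. In either case $\deg u\le 1$, which completes the argument.

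\textbf{Expected main difficulty.} The only step that requires real care is the order-matching in the second paragraph: one must check that the particular product order (\ref{monomialorder}) restricts, under $w_i\leftrightarrow e_i$, to the ``compare position first, then compare by $<_x$'' order on $F=\Dirsum_i Se_i$ that Proposition~\ref{linear quotients} uses. This is exactly where hypothesis (\ref{monomialorder}) is indispensable; once this is granted, the rest is a routine unwinding of the $x$-condition.
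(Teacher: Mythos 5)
Your proof is correct, but it takes a genuinely different route from the paper. The paper proves Theorem~\ref{main} directly, in the spirit of the equigenerated case \cite[Theorem 10.1.9]{HH}: it uses Lemma~\ref{standard} to select the standard monomials of $y$-degree $k$ as the generating sequence, and then for each colon ideal $I_j$ performs an explicit induction on leading terms, invoking the $x$-condition at each step to produce a linear form $l\in I_j$ with $\ini_<(f)=\ini_<(ul)$ and passing to $f'=f-ul$ with smaller leading term. Your argument instead identifies the $y$-degree-$k$ piece $T_{(\bullet,k)}$ of $T$ with the free $S$-module $F$ of Proposition~\ref{linear quotients}, checks that the product order (\ref{monomialorder}) restricts under $w_i\leftrightarrow e_i$ to the position-over-term order used there (with $<_x$ on coefficients), and reads off the $x$-condition on $\ini_<(J)\cap T_{(\bullet,k)}$ to invoke part (b) of that proposition; the theorem thus appears as Proposition~\ref{linear quotients} applied fiberwise to the $y$-graded pieces, which is cleaner and more conceptual. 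Two details deserve an explicit word in a polished write-up. First, your parenthetical remark that Proposition~\ref{linear quotients} does not actually use minimality of the generating sequence is both correct and essential here, since $g_1,\ldots,g_r$ need not be minimal; the proofs of (a) and (b) never appeal to minimality, and the case $I_j=S$, which occurs for redundant $g_j$, is admitted by the definition of linear quotients. Second, the identity $\ini_<(C)=\ini_<(J)\cap T_{(\bullet,k)}$ is asserted from bigradedness alone; it does hold because a bigraded ideal has a bihomogeneous reduced Gr\"obner basis, so that a monomial $m\in\ini_<(J)\cap T_{(\bullet,k)}$ is $\ini_<(gh)$ for a bihomogeneous GB element $g$ and monomial $h$ with $gh\in C$, but a sentence to this effect is worth including. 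What your streamlined route gives up is the explicit description of the linear colon ideals that the paper's induction exhibits and then exploits in Corollary~\ref{maybe}; recovering that from your formulation would require unwinding Proposition~\ref{linear quotients}(a) once more.
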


\begin{proof}
Let $k\geq 1$. By Lemma~\ref{standard},  $A_k$ has a system of generators $h_1,\ldots,h_s$, where each of them is of the form $f_{i_1}\cdots f_{i_k}$ and such that $h^*=y_{i_1}\cdots y_{i_k}$ is a standard monomial of $T$ with respect to $<$.  We may assume that $h^*_1<'\cdots<'h^*_s$. For $j=1,\ldots,s$, let $I_j= (h_1,\ldots,h_{j-1}):h_j$. We show that if $I_j\neq S$, then it is generated by linear forms of $S$.

Indeed, let $f\in I_j$. We may assume that the leading term of $f$ has coefficient $1$.  If $h_j\in (h_1,\ldots,h_{j-1})$, then $I_j=S$. We may now assume that $h_j\not\in (h_1,\ldots,h_{j-1})$. Then  $fh_j=\sum_{i=1}^{j-1} g_ih_i$,  where the  $g_i$'s are polynomials in $S$.
This implies that $q=fh^*_j-\sum_{i=1}^{j-1} g_ih^*_i\in J$.
Since $h^*_1<'\cdots<'h^*_j$, we have
\begin{eqnarray}
\label{initial}
\ini_<(q)=\ini_<(f)h^*_j.
\end{eqnarray}
Thus there exists an element $g$ in the Gr\"{o}bner basis of $J$ such that $\ini_<(g)$ divides $\ini_<(f)h^*_j$. By assumption,  $\ini_<(g)=x_{a}^{r}w$ with $0\leq r\leq 1$ and $w$ a monomial in $K[y_1,\ldots,y_m]$.
Suppose $r=0$. Then $w\in \ini_<(J)$ and divides $h^*_j$, contradicting the fact that $h^*_j$ is a standard monomial. Hence $r=1$.

Let $v=h^*_j/w$,  $u=\ini_<(f)/x_a$  and $g=lw -\sum_{i=1}^rp_iw_i$, where $\ini_<(l)=x_a$, $p_i\in S$, $w_i\in K[y_1,\ldots,y_m]$, $\deg w_i=k$ and $w_i<w$ for all $i$. Then $gv=lh_j^*- \sum_{i=1}^r p_i(w_iv)$. Since  $w_iv<h_j^*$ it follows from Lemma \ref{standard} that  $\sum_{i=1}^rp_i(w_iv)\in (h_1^*,\ldots,h_{j-1}^*)$. Therefore, $l\in I_j$ and hence $f'=f-ul\in I_j$. Since $\ini_<(f)=\ini_<(ul)$, it follows that $ \ini_<(f')<\ini_<(f)$. Hence by induction we may assume that $f'$ is a linear combination  of linear forms in $I_j$ with coefficients in $S$. Since $f=f'+ul$ with $l\in I_j$, the desired conclusion follows.
\end{proof}

\medskip
We fix an integer $k\geq 1$. With the assumptions and notation of Theorem~\ref{main},  let $h_1,\ldots,h_s$ be the generators of $A_k$ as described in the proof of Theorem~\ref{main}. Then we have seen that the ideals  $I_j=(h_1,\ldots, h_{j-1}):h_j$ are  generated by linear forms for $j=1,\ldots,s$. The following corollary complements the results of Theorem~\ref{main}.

\begin{Corollary}
\label{maybe}
If $I_j\neq S$, then the ideal  $\ini_<(I_j)$ is generated by all $x_i\in S$  such that $x_ih_j^*\in \ini_<(J)$.
\end{Corollary}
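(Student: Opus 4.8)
The plan is to analyze the linear forms generating $I_j$ by revisiting the argument in the proof of Theorem~\ref{main}. Recall that there we showed: if $f \in I_j$ with leading term of coefficient $1$ and $h_j \notin (h_1,\dots,h_{j-1})$, then $\ini_<(q) = \ini_<(f)h_j^*$ where $q = fh_j^* - \sum g_i h_i^* \in J$, so some Gröbner basis element $g$ of $J$ has $\ini_<(g) = x_a w$ with $w \in K[y_1,\dots,y_m]$ dividing $h_j^*$ and $x_a \mid \ini_<(f)$; moreover $x_a h_j^*$ is then divisible by $\ini_<(g) = x_a w$, hence $x_a h_j^* \in \ini_<(J)$. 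Unwinding, the linear form $l$ with $\ini_<(l) = x_a$ (coming from $g$) lies in $I_j$, and one replaces $f$ by $f - ul$ with strictly smaller initial term. Iterating, $\ini_<(f)$ is a product of variables $x_a$ each of which satisfies $x_a h_j^* \in \ini_<(J)$. This shows $\ini_<(I_j) \subseteq (x_i : x_i h_j^* \in \ini_<(J))$.

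Next I would prove the reverse inclusion: if $x_i h_j^* \in \ini_<(J)$, then $x_i \in \ini_<(I_j)$. Since $x_i h_j^*$ is a monomial in $\ini_<(J)$, there is a Gröbner basis element $g$ with $\ini_<(g)$ dividing $x_i h_j^*$; by the $x$-condition $\ini_<(g) = x_b^r w$ with $r \le 1$ and $w \in K[y_1,\dots,y_m]$, and since $h_j^*$ is standard we cannot have $r = 0$ (else $w \mid h_j^*$ would contradict standardness), so $\ini_<(g) = x_b w$ with $x_b \mid x_i h_j^*$. Because $w \mid h_j^*$ and $h_j^*$ is standard, $x_b \nmid h_j^*$ forces $x_b = x_i$. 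Writing $g = l w - \sum_{i} p_i w_i$ with $\ini_<(l) = x_i$, $w_i <' w$, $\deg w_i = k$, I set $v = h_j^*/w$ and compute $gv = l h_j^* - \sum p_i (w_i v)$; since $w_i v <' h_j^*$, Lemma~\ref{standard} gives $\sum p_i(w_i v) \in (h_1^*,\dots,h_{j-1}^*)$, hence applying $\varphi$ yields $l h_j = \varphi(gv) + \sum p_i \varphi(w_i v) \in (h_1,\dots,h_{j-1})$ modulo the observation that $gv \in J$ so $\varphi(gv)=0$. Thus $l \in I_j$ and $\ini_<(l) = x_i \in \ini_<(I_j)$.

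Combining the two inclusions gives $\ini_<(I_j) = (x_i : x_i h_j^* \in \ini_<(J))$ as claimed. The step I expect to require the most care is the forward inclusion $\ini_<(I_j) \subseteq (x_i : x_i h_j^* \in \ini_<(J))$: one must check carefully that the variable $x_a$ extracted at each stage of the induction genuinely satisfies $x_a h_j^* \in \ini_<(J)$ — this follows because $\ini_<(g) = x_a w$ with $w \mid h_j^*$, so $x_a w \mid x_a h_j^*$ — and that the inductive descent on $\ini_<(f)$ terminates, which it does since the monomial order is a well-order. The reverse inclusion is essentially a direct application of the machinery already set up in the proof of Theorem~\ref{main} and Lemma~\ref{standard}, so it should go through smoothly.
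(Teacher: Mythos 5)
Your proposal is correct and follows essentially the same route as the paper: the reverse inclusion is nearly word-for-word the argument the paper gives (take $g \in J$ with $\ini_<(g) = x_i h_j^*$, peel off the $y$-monomial $h_j^*$ to exhibit a linear form $l$ with $\ini_<(l) = x_i$, and use Lemma~\ref{standard} to see $lh_j \in (h_1,\dots,h_{j-1})$), while your forward inclusion reuses equation~(\ref{initial}) and the $x$-condition exactly as the paper does. The only difference is cosmetic: the paper's forward direction observes that the linear forms generating $I_j$ (chosen with distinct initial terms) form a Gr\"obner basis of $I_j$ and then applies (\ref{initial}) once per generator, whereas you trace the inductive replacement $f \mapsto f - ul$ through Theorem~\ref{main} directly. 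One small imprecision worth flagging: your sentence \emph{``Iterating, $\ini_<(f)$ is a product of variables $x_a$ each of which satisfies $x_a h_j^*\in\ini_<(J)$''} overstates what the iteration gives. The descent produces successive remainders $f', f'', \dots$ with strictly smaller initial terms, but $\ini_<(f)$ itself is not thereby factored into such variables. Fortunately, you do not need that: you have already observed, before that sentence, that $x_a \mid \ini_<(f)$ and $x_a h_j^* \in \ini_<(J)$, and that single divisibility already puts $\ini_<(f)$ in the ideal $(x_i : x_i h_j^*\in\ini_<(J))$, so the forward inclusion is complete without any iteration at all.
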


\begin{proof}
Let $l_1,\ldots,l_r$ be the linear forms generating $I_j$. We may choose the generators in a way that $\ini_<(l_s)\neq \ini_<(l_t)$ for $s\neq  t$. Then $l_1,\ldots,l_r$ is the Gr\"{o}bner basis of $I_j$. By (\ref{initial}), for any $l_s\in I_j$, we have $\ini_<(l_s)h_j^*\in \ini_<(J)$.  This shows that any generator of $\ini_<(I_j)$ has the desired form. Conversely, let $g\in J$ with $\ini_<(g)=x_ih_j^*$. Then  $g=lh_j^*+\sum_{t=1}^r g_tv_t$ with $\ini_<(l)=x_i$, $g_t\in S$ and $v_t$ monomials in $K[y_1,\ldots,y_m]$ of degree $k$ with $h_j^*>v_t$ for all $t$. By using Lemma~\ref{standard},  the sum  $\sum_{t=1}^r g_tv_t$  can be rewritten as linear combination of the monomials $h_i^*$ with $h_j^*>h_i^*$. This shows that $l\in I_j$,  and  the desired result follows.
\end{proof}

In the following Proposition we show that under special assumption on the reduced Gr\"{o}bner basis of $J$, the generating set $\{h_1,\ldots,h_s\}$ of $A_k$ described in the proof of Theorem~\ref{main} is a minimal generating set.

\begin{Proposition}
\label{carisokay}
We keep the assumptions of Theorem \ref{main}. Let $\mathcal G$ denote the reduced Gr\"{o}bner basis of $J$. If
$\mathcal G\subseteq (x_1,\ldots,x_n)T$, then $\{h_1,\ldots,h_s\}$ is a minimal generating set of $A_k$.
\end{Proposition}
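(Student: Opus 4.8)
The plan is to establish minimality via graded Nakayama: since $\{h_1,\ldots,h_s\}$ already generates $A_k$ (Lemma~\ref{standard}), it suffices to show that its image in $A_k/\mm A_k$ is $K$-linearly independent, where $\mm=(x_1,\ldots,x_n)$. The structural observation that makes this transparent is that the hypothesis forces $J\subseteq(x_1,\ldots,x_n)T$: indeed $\mathcal G$ generates $J$ as an ideal, and since each element of $\mathcal G$ lies in $(x_1,\ldots,x_n)T$, so does $J=(\mathcal G)$. Consequently
\[
A/\mm A \;=\; T/\bigl((x_1,\ldots,x_n)T+J\bigr) \;=\; T/(x_1,\ldots,x_n)T \;\cong\; K[y_1,\ldots,y_m]
\]
as bigraded $K$-algebras, with $y_j$ retaining bidegree $(d_j,1)$. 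Restricting to $y$-degree $k$ (note $\mm\subseteq A_0$, so $(\mm A)_k=\mm A_k$) identifies $A_k/\mm A_k$ with the $K$-span of the degree-$k$ monomials $y_{i_1}\cdots y_{i_k}$.

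Next I would track the generators through this identification. By the construction in the proof of Theorem~\ref{main}, $h_i=\varphi(h_i^*)$ where $h_i^*=y_{i_1}\cdots y_{i_k}$ is a standard monomial of $y$-degree $k$, and the $h_i^*$ are pairwise distinct. The composite $T\xrightarrow{\varphi}A\twoheadrightarrow A/\mm A\cong K[y_1,\ldots,y_m]$ is just the substitution $x_i\mapsto 0$, $y_j\mapsto y_j$; hence the image of $h_i$ in $A_k/\mm A_k$ is precisely the monomial $h_i^*$. So $h_1,\ldots,h_s$ map to $s$ distinct monomials of the basis above, in particular to a $K$-linearly independent set, and therefore $\{h_1,\ldots,h_s\}$ is a minimal system of generators of $A_k$.

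I do not expect a genuine obstacle here; the two points that need a little care are (a) the passage from $\mathcal G\subseteq(x_1,\ldots,x_n)T$ to $J\subseteq(x_1,\ldots,x_n)T$, which uses only that a Gr\"obner basis generates its ideal, and (b) correctly recognizing the image of $h_i$ in $A_k/\mm A_k$ as $h_i^*$. As a side remark, under this hypothesis one also has $\ini_<(J)\subseteq(x_1,\ldots,x_n)T$, so that every degree-$k$ monomial in $y_1,\ldots,y_m$ is already standard; hence in this situation $s=\binom{m+k-1}{k}$ and $A_k$ is minimally generated by the full set of degree-$k$ products of the $f_i$'s.
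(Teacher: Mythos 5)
Your proof is correct, and it takes a genuinely different and cleaner route than the paper's. The paper argues by contradiction: assuming a relation $h_j=\sum_{i\neq j}g_ih_i$ in $A_k$, it lifts this to an element $q=h_j^*-\sum_{i\neq j}g_ih_i^*\in J$ of minimal initial term, then performs a Gr\"obner reduction of $q$ by an element of $\mathcal G$, and uses $\mathcal G\subseteq(x_1,\ldots,x_n)T$ precisely to guarantee that the reduction produces a relation of the same form with strictly smaller initial term, a contradiction. You instead observe that $\mathcal G\subseteq(x_1,\ldots,x_n)T$ already forces $J\subseteq(x_1,\ldots,x_n)T$, collapse $A/\mm A$ to $K[y_1,\ldots,y_m]$, and read off linear independence of the images $h_i^*$ directly; minimality then follows from graded Nakayama. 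This avoids the induction on initial terms entirely, and as a bonus it shows (as you note) that $\ini_<(J)\subseteq(x_1,\ldots,x_n)T$ so that \emph{every} degree-$k$ monomial in the $y_j$ is standard, giving the explicit count $s=\binom{m+k-1}{k}$, information the paper's proof does not make visible. The two small points you flagged --- that a Gr\"obner basis generates $J$ as an ideal, and that the image of $h_i$ in $A_k/\mm A_k$ is exactly $h_i^*$ under the identification $T\to A\to A/\mm A$ sending $x_i\mapsto 0$, $y_j\mapsto y_j$ --- are both handled correctly. Both proofs use $\mathcal G\subseteq(x_1,\ldots,x_n)T$ as the decisive hypothesis, but yours uses it structurally (to compute $A/\mm A$) rather than term-by-term inside a reduction.
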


\begin{proof}
By contradiction assume that $h_j=\sum_{i\neq j} g_ih_i$, for some $j$, where $g_i$'s are homogeneous polynomials in $S$. Then $q=h_j^*-\sum_{i\neq j} g_ih_i^*\in J$. Without loss of generality we may assume that
$q$ has the smallest initial term with respect to $<$ among the elements of $J$ of the form $h_j^*-\sum_{i\neq j} z_ih_i^*$, where $z_i\in S$ are homogeneous.
Since $h_j^*$ is a standard monomial, $\ini_<(q)=\ini_<(g_th_t^*)$ for some $t\neq j$. Hence there exists an element $f\in \mathcal G$ such that $\ini_<(f)$ divides $\ini_<(q)=\ini_{<_x}(g_t)h_t^*$.
Let $f=\sum_{\ell=1}^k f_{\ell}p_{\ell}$, where $f_{\ell}$'s are homogeneous polynomials in $S$ and $p_{\ell}$'s are monomials in $K[y_1,\ldots,y_m]$ and $\ini_<(f)=\ini_{<_x}(f_1)p_1$. Then $\ini_{<_x}(f_1)$ divides $\ini_{<_x}(g_t)$ and
$p_1$ divides $h_t^*$. So $q'=q-(\ini_{<_x}(g_t)/\ini_{<_x}(f_1)) (h_t^*/p_1)f\in J$ and $\ini_<(q')<\ini_<(q)$.
 Hence $q'$ is of the form $h_j^*-\sum_{i=1}^d g_i'w_i$, where $w_i$'s are monomials of degree $k$ in $K[y_1,\ldots,y_m]$ and $g_i'$ are homogenous polynomials in $S$.
 By Lemma \ref{standard}, one can replace each $w_i$ by a linear combination of $h_1^*,\ldots,h_s^*$ with coefficients in $S$.  By our assumption on $\mathcal G$, we have $f_{\ell}\in (x_1,\ldots,x_n)S$ for all ${\ell}$. This together with the fact that $q'$ is bihomogeneous implies that
$h_j^*$ does not appear in this linear combination which is equal to $w_i$. So $q'=h_j^*-\sum_{i\neq j} \lambda_i'h_i^*$, where $\lambda_i\in S$ are homogeneous. But $q'\in J$ and $\ini_<(q')<\ini_<(q)$, which contradicts to the choice of $q$.
\end{proof}

Let $<_x$ be a monomial order on $K[x_1,\ldots,x_n]$, $I$ be a monomial ideal with the minimal system of monomial generators $f_1,\ldots,f_m$,
$<'$ be an order on $K[y_1,\ldots,y_m]$ induced by $y_1<'\cdots<'y_m$ and $<$ be a monomial order on $T$ defined as in (\ref{monomialorder}). Then we have


\begin{Theorem}
\label{monomialcase}
Let $I$ be a monomial ideal and $J$ be the defining ideal of the Rees ring $\mathcal{R}(I)$. If $\ini_{<}(J)$ is generated by quadratic monomials, then all powers of $I$ have linear quotients with respect to their minimal generating sets and hence they are componentwise linear.
\end{Theorem}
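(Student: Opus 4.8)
The plan is to deduce Theorem~\ref{monomialcase} from the machinery already built, by checking that the quadratic hypothesis on $\ini_<(J)$ forces two things simultaneously: first that $J$ satisfies the $x$-condition (so Theorem~\ref{main} applies and each power $I^k$ has linear quotients with respect to the standard-monomial generating sequence $h_1,\dots,h_s$), and second that this sequence is in fact a \emph{minimal} system of generators of $I^k$, so that the criterion quoted from \cite[Theorem 8.2.15]{HH} (valid verbatim for modules) upgrades ``linear quotients'' to ``componentwise linear.''

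First I would record the elementary structural fact that, because $J$ is the defining ideal of a Rees ring of a monomial ideal, every generator of $J$ (and hence every element of the reduced Gr\"obner basis, and every generator of $\ini_<(J)$) lies in the ideal $(x_1,\dots,x_n)T$: a generator of $J$ purely in the $y$'s would encode a monomial relation among the $f_i$, but distinct monomials are linearly independent, so no such relation exists. Combined with the hypothesis that $G(\ini_<(J))$ consists of quadratic monomials, this means each $u\in G(\ini_<(J))$ has degree $2$ and is divisible by some $x_a$; write $u=x_a\cdot z$ where $z$ is either another variable $x_b$ or some $y_c$. In either case $u=vw$ with $v\in S$ of degree $\le 1$ and $w\in K[y_1,\dots,y_m]$ (when $z=x_b$ take $v=u$, $w=1$; when $z=y_c$ take $v=x_a$, $w=y_c$). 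Hence $J$ satisfies the $x$-condition with respect to the order~(\ref{monomialorder}), and Theorem~\ref{main} gives that $I^k$ has linear quotients with respect to $h_1,\dots,h_s$.

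Next I would argue minimality of $\{h_1,\dots,h_s\}$. Here I would like to invoke Proposition~\ref{carisokay}, whose hypothesis is precisely $\mathcal G\subseteq (x_1,\dots,x_n)T$ — and that is exactly what the first step established. So Proposition~\ref{carisokay} applies directly and tells us $\{h_1,\dots,h_s\}$ is a minimal generating set of $A_k=I^k$. Then, since $I^k$ has linear quotients with respect to a \emph{minimal} system of generators, \cite[Theorem 8.2.15]{HH} (stated in the introduction to hold word for word for graded modules) yields that $I^k$ is componentwise linear. This holds for every $k\ge 1$, completing the proof.

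The main obstacle — and the only place requiring genuine care — is the passage from ``$\ini_<(J)$ generated by quadratics'' to ``the reduced Gr\"obner basis $\mathcal G$ lies in $(x_1,\dots,x_n)T$'', because a priori the Gr\"obner basis elements could have leading terms that are quadratic and $x$-divisible while their lower terms (in the $<'$-smaller part) involve pure $y$-monomials. I would resolve this by the bihomogeneity/linear-independence observation above: any element of $\mathcal G$ reduces a relation $\sum g_\ell p_\ell = 0$ in $A$ among monomials $\varphi(p_\ell)$ with $g_\ell\in S$; if some $g_\ell$ were a nonzero scalar (i.e. the term $p_\ell$ contributed a pure power of $K[\mathbf y]$ with unit coefficient), then modulo the $x$-graded pieces one would obtain a nontrivial $K$-linear dependence among distinct monomials of $S$, which is impossible. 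Thus every term of every element of $\mathcal G$ is divisible by some $x_i$, giving $\mathcal G\subseteq(x_1,\dots,x_n)T$ as needed. One should also double-check that the generators of $I$ being a \emph{minimal} monomial generating set (as assumed in the statement) is what makes the $h_i^*$ genuinely distinct standard monomials and keeps the bookkeeping in Lemma~\ref{standard} and Proposition~\ref{carisokay} applicable; this is routine.
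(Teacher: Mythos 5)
There is a genuine gap in the minimality step, and it stems from a false lemma.

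Your argument that the reduced Gr\"obner basis $\mathcal G$ of $J$ lies in $(x_1,\dots,x_n)T$ is wrong for the Rees algebra of a monomial ideal, and the justification offered (``a generator of $J$ purely in the $y$'s would encode a monomial relation among the $f_i$, but distinct monomials are linearly independent'') conflates linear relations with binomial (multiplicative) relations. The ideal $J$ is a toric ideal, and it has binomials purely in the $y$'s whenever two distinct multisets of generators have the same product, i.e.\ $f_{i_1}\cdots f_{i_k}=f_{j_1}\cdots f_{j_k}$ as monomials. This happens routinely. For a concrete counterexample, take $I=(x_1x_2,\,x_1x_3,\,x_2x_4,\,x_3x_4)$: then $f_1f_4=x_1x_2x_3x_4=f_2f_3$, so $y_1y_4-y_2y_3\in J$, a nonzero element of $J$ that does not lie in $(x_1,\dots,x_n)T$. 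The paper's own computations bear this out: Theorem~\ref{GB} exhibits Gr\"obner basis elements $\psi_{j_1}^{k_2}\psi_{j_2}^{k_1}-\psi_{j_1}^{k_1}\psi_{j_2}^{k_2}$ purely in the $y$-type variables, and the type~(iii) binomials $y_jy_k-y_ay_b$ for path graphs and types~(v),(vi) for Cameron--Walker graphs do the same. Consequently the hypothesis of Proposition~\ref{carisokay} is not met, and your appeal to it does not go through; the minimality of $h_1,\dots,h_s$ is left unproved. (A secondary symptom of the same confusion: you assert that every generator of $\ini_<(J)$ is divisible by some $x_a$, which would exclude quadratics of the form $y_ay_b$; but such generators do occur, and they satisfy the $x$-condition for free with $v=1$.)

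The paper's proof of minimality is instead a direct descent argument that uses the quadratic hypothesis substantively. Supposing $h_j=uh_i$ with $h_i^*$ standard, $u\neq 1$, and $h_i^*$ $<'$-minimal with this property, one looks at $q=h_j^*-uh_i^*\in J$ and a Gr\"obner element $g$ whose leading term divides $\ini_<(q)=uh_i^*$. The case $\ini_<(g)=y_ry_t$ is excluded because $y_ry_t$ would then divide the standard monomial $h_i^*$; the case $\ini_<(g)=x_ry_t$ produces a strictly $<'$-smaller factor $h'$ with $h_j=wh'$ and $w\neq 1$, which after one application of Lemma~\ref{standard} contradicts the minimal choice of $h_i^*$. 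Your derivation of the $x$-condition itself can be repaired without the false lemma: a minimal generator of $\ini_<(J)$ is a quadratic $x_ax_b$, $x_ay_b$, or $y_ay_b$; the latter two satisfy the $x$-condition trivially (take $v=x_a,w=y_b$ or $v=1,w=y_ay_b$), and $x_ax_b$ cannot occur because a polynomial in $J$ with initial term in $K[x_1,\dots,x_n]$ would, by the monomial order~(\ref{monomialorder}), lie entirely in $S$, contradicting $J\cap S=0$. But the minimality claim needs an argument of the paper's type, not Proposition~\ref{carisokay}.
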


\begin{proof}
Let $f_1,\ldots,f_m$ be the minimal system of monomial generators of $I$ such that $f_1<_x\cdots<_xf_m$.
Let $k\geq 1$ be an integer.  For any monomial $h=f_{i_1}\cdots f_{i_k}\in I^k$, we set
$h^{*}=y_{i_1}\cdots y_{i_k}$. Let $h_1^{*},\ldots,h_{d}^*$ be the standard monomials of degree $k$ in $T$.  Then by Lemma~\ref{standard}, $h_1,\ldots,h_d$ is a system of generators for $I^k$. We claim that this is indeed a minimal system of generators. Note that since $f_1,\ldots,f_m$ is a minimal generating set of $I$, $y_1,\ldots,y_m$ are standard. Let $k>1$
and by contradiction suppose that $h_j=uh_i$ for some $i\neq j$. If $u=1$, then $h_i^*-h_j^*\in J$, which implies that $h_i^*\in\ini_{<}(J)$ or $h_j^*\in\ini_{<}(J)$, a contradiction. So $u\neq 1$.  Without loss of generality we may assume that $h_i^*$ is the smallest standard monomial with respect to $<'$ such that $h_j=uh_i$ for some monomial $u\neq 1$.
We have $q=h_j^*-uh_i^*\in J$. Since $h_j^*\notin \ini(J)$,   $\ini_<(q)=uh_i^*$.  By assumption there exists a binomial $g$ in the Gr\"{o}bner basis of $J$ such that $\ini_<(g)$ is generated by a quadratic monomial and $\ini_<(g)$ divides $uh_i^*$. If $\ini_<(g)=y_ry_t$ for some $r$ and $t$, then $y_ry_t$ divides $h_i^*$, which implies that $h_i^*\in \ini_<(J)$, a contradiction.
So $\ini_<(g)=x_ry_t$ for some $r$ and $t$ and then $x_r$ divides $u$ and $y_t$ divides $h_i^*$. Let $g=x_ry_t-u'y_{p}$ for some monomial $u'\in S$. Hence $x_rf_t=u'f_p$. We should have $u'\neq 1$, otherwise $f_p$ is a multiple of $f_t$, a contradiction.
We have $uh_i=(u/x_r)x_rf_t(h_i/f_t)=(u/x_r)u'f_p(h_i/f_t)$. Hence $h_j=wh'$, where $w=(u/x_r)u'\neq 1$ and $h'=f_p(h_i/f_t)$. So $h_j^*-w(h')^*\in J$ and $h'\neq h_j$ which implies that $(h')^*\neq h_j^*$. Also $y_p<'y_t$ implies that $(h')^*<'h_i^*$.
By the assumption on $h_i^*$, the monomial $(h')^*$ is not standard. Then $(h')^*-u''(h'')^*\in J$, for some standard monomial  $(h'')^*$ with $(h'')^*<'(h')^*$ and some monomial $u''\in S$. Hence $h_j=wh'=wu''h''$ and then
$h_j^*-wu''(h'')^*\in J$. We have $(h'')^*<'(h')^*<'h_i^*$ and $(h'')^*\neq h_j^*$, because $wu''\neq 1$. This contradicts to the choice of $h_i^*$. So the claim is proved

Now, by Theorem \ref{main}, for each $k$, $I^k$ has linear quotients with respect to its minimal set of monomial generators. Then by ~\cite[Theorem 8.2.15]{HH}, $I^k$ is componentwise linear.
\end{proof}

For the next result we put an  extra condition on the monomial order $<'$  defined on  $K[y_1,\ldots,y_m]$.

We keep the assumptions that $\deg f_i =d_i$ and that $d_1\leq d_2\leq \cdots \leq d_m$,  and define the weight vector $\wb=(d_1,\ldots,d_m)$. We denote by $<^*$  an order on $K[y_1,\ldots,y_m]$  with $y_1<^*\cdots <^* y_m$, and  define  $<'$  as follows:
\begin{eqnarray}
\label{weight}
\yb^{\ab}<' \yb^{\bb}  \quad  \text{if} \quad  \ab\cdot \wb<\bb\cdot\wb, \quad \text{or else} \quad \ab\cdot \wb=\bb\cdot\wb \quad \text{and} \quad \yb^{\ab}<^* \yb^{\bb}.
\end{eqnarray}
Here for a vector $\ab=(a_1,\ldots,a_m)$, $\yb^{\ab}=y_1^{a_1}\cdots y_m^{a_m}$ and $\cb\cdot \db=\sum_{i=1}^mc_id_i$ is the standard scalar product.
\begin{Corollary}
\label{weightedworks}
With the  monomial order $<'$ on $K[y_1,\ldots,y_m]$ defined in {\em (\ref{weight})}, let $<$ be the monomial order on $T$ defined in
{\em (\ref{monomialorder})}, and assume that the defining ideal $J$ of $A$ satisfies the $x$-condition with respect $<$.  Then $A_k$ is componentwise linear for all $k\geq 1$,
\end{Corollary}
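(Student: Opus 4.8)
The plan is to deduce this directly from Theorem~\ref{main} and Proposition~\ref{componentwise}. Fix $k\geq 1$. Since $J$ satisfies the $x$-condition with respect to the order $<$ of (\ref{monomialorder}) (built from the specified $<'$), Theorem~\ref{main} tells us that the $S$-module $A_k$ has linear quotients; moreover the proof of that theorem produces an explicit linear quotient sequence $h_1,\ldots,h_s$, where each $h_j$ is a product $f_{i_1}\cdots f_{i_k}$ whose lift $h_j^{\ast}=y_{i_1}\cdots y_{i_k}$ is a standard monomial of $T$, and where the indexing is chosen so that $h_1^{\ast}<'h_2^{\ast}<'\cdots<'h_s^{\ast}$. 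So the only thing left to check is that, for this particular sequence, $\deg h_1\leq\deg h_2\leq\cdots\leq\deg h_s$ as elements of $S$; once this is established, Proposition~\ref{componentwise} applies verbatim and yields that $A_k$ is componentwise linear.

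To verify the degree inequality I would just unwind the definitions. Write $h_j^{\ast}=\yb^{\ab}$ with $\ab=(a_1,\ldots,a_m)$; then $h_j=\varphi(h_j^{\ast})=f_1^{a_1}\cdots f_m^{a_m}$, so that, since $\deg f_i=d_i$ and $\wb=(d_1,\ldots,d_m)$,
\[
\deg h_j=\sum_{i=1}^{m} a_i d_i=\ab\cdot\wb .
\]
Now the order $<'$ of (\ref{weight}) refines the weight preorder given by $\wb$: by its very definition, $\yb^{\ab}<'\yb^{\bb}$ forces either $\ab\cdot\wb<\bb\cdot\wb$, or else $\ab\cdot\wb=\bb\cdot\wb$, hence in all cases $\ab\cdot\wb\leq\bb\cdot\wb$. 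Applying this to the consecutive pairs of the chain $h_1^{\ast}<'\cdots<'h_s^{\ast}$ and using the displayed identity gives exactly $\deg h_1\leq\deg h_2\leq\cdots\leq\deg h_s$.

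Combining the two facts finishes the argument: $A_k$ has linear quotients with respect to $h_1,\ldots,h_s$ by Theorem~\ref{main}, and this sequence has non-decreasing degrees by the computation above, so $A_k$ is componentwise linear by Proposition~\ref{componentwise}. As $k\geq 1$ was arbitrary, the corollary follows.

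I do not expect a serious obstacle here: the role of the weighted order (\ref{weight}) is precisely to arrange that the linear quotient sequence coming out of the proof of Theorem~\ref{main} — which is listed in increasing $<'$-order of the associated $y$-monomials — automatically appears in non-decreasing $S$-degree. The only points needing a moment's care are the identity $\deg h_j=\ab\cdot\wb$ and the remark that $<'$ refines the $\wb$-weight, both immediate from the definitions, together with the routine fact that an order of the form (\ref{weight}) is a genuine monomial order, so that Theorem~\ref{main} legitimately applies to this choice of $<'$.
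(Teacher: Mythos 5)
Your proposal is correct and follows exactly the same route as the paper: invoke Theorem~\ref{main} to get the linear quotient sequence $h_1,\ldots,h_s$ ordered by $<'$, observe that the weighted order~(\ref{weight}) forces $\deg h_1\leq\cdots\leq\deg h_s$, and conclude by Proposition~\ref{componentwise}. The only difference is that you spell out the short degree computation $\deg h_j=\ab\cdot\wb$ and the fact that $<'$ refines the weight preorder, which the paper leaves implicit.
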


\begin{proof}
 In the proof of Theorem~\ref{main}, we noticed that  $A_k$ has a system of generators $h_1,\ldots,h_s$, where each of them is of the form $f_{i_1}\cdots f_{i_k}$ and such that $h^*=y_{i_1}\cdots y_{i_k}$ is a standard monomial of $T$ with respect to $<$. Moreover, if choose the labeling of the $h_i$ such  $h^*_1<'\cdots<'h^*_s$, then $h_1,\ldots,h_s$ has linear quotients.   The monomial order defined in (\ref{weight}) implies that $\deg h_1\leq \deg h_2 \leq \cdots \leq \deg h_s$. Thus we may apply Proposition~\ref{componentwise}, and the desired conclusion follows. 
\qed
\end{proof}



\section{Symmetric powers of modules}

Let $M$ be a finitely generated graded $S$-module with
homogeneous generators $f_1, \ldots,f_m$.  Furthermore,   let  $F=\bigoplus_{i=1}^m Se_i$  be the free $S$-module with basis $e_1,\ldots,e_m$ and   $\varphi:F\rightarrow M$ the  $S$-module homomorphism with $\varphi(e_i)=f_i$ and $C=\Ker(\varphi)$. Let $g_1,\dots, g_r$ be a system of homogeneous generators of $C$ with
\[
g_i=\sum_{j=1}^ma_{ji}e_j
\]
for $i=1,\ldots,r$. As before, we let $T=S[y_1,\ldots,y_m]$. Then,  as a graded $K$-algebra,  the symmetric algebra $\Sym(M)$ of $M$ is isomorphic to $T/J$, where $J$ is generated by the polynomials
\[
h_i=\sum_{j=1}^ma_{ji}y_j
\]
for $i=1,\ldots,r$.

The graded components $\Sym_j(M)$  of $\Sym(M)$ are called the {\em symmetric powers} of $M$. Let  $I$ be a graded ideal and $\mathcal{R}(I)$ be the Rees ring of $I$.   There is a canonical epimorphism $\Sym(I)\to \mathcal{R}(I)$.   The ideal $I$ is called of {\em linear type} if this epimorphism is an isomorphism. In this case the ordinary powers and the symmetric powers of $I$ coincide.

\medskip
As a first application of the theory developed in Section~\ref{one} we study a special class of modules and their symmetric powers. For this purpose we fix a finite simple graph $G$ on the vertex set $[n]$, consider the free $S$-module $F$ with basis $e_1,\ldots,e_n$ and let $C_G\subset F$ be the submodule of $F$ generated  by the elements
\[
x_ie_j-x_je_i \quad\text{with} \quad \{i,j\}\in E(G).
\]

We call the module $M_G=F/C_G$ the {\em edge module} of $G$.  Then for $i=1,\ldots,n$ the elements $f_i=e_i+C_G$ generate $M_G$. For example, if $G=K_n$ is the complete graph on $[n]$, then $C_G$ is the first syzygy module of the graded maximal ideal $\mm=(x_1,\ldots, x_n)$, and hence  $M_{G}\iso  \mm$.

Recall that a graph $G$ is called \emph{chordal}, if it has no induced cycle of length $k\geq 4$.
Also for a graph $G$ and $W\subset V(G)$, the induced subgraph $G_W$ of $G$ is the graph with vertex set $W$ and edge set $E(G_W)=\{\{i,j\}\in E(G)\:\; i,j\in W\}$ .

\begin{Theorem}
\label{mg}
Let $G$ be a finite simple graph on $[n]$. The following conditions are equivalent:
\begin{enumerate}
\item[{\em (i)}] The defining ideal of $\Sym(M_G)$ satisfies the $x$-condition.
\item[{\em (ii)}] $\Sym_j(M_G)$ has linear resolution for all $j$.
\item[{\em (iii)}] $M_G$ has linear resolution.
\item[{\em (iv)}] $G$ is chordal.
\end{enumerate}
\end{Theorem}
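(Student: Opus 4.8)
The plan is to establish the cycle of implications (iii) $\Rightarrow$ (ii) $\Rightarrow$ (i) $\Rightarrow$ (iv) $\Rightarrow$ (iii), using the explicit description of the initial ideal of the binomial edge ideal together with the machinery of Section~2. The key point is that the defining ideal $J$ of $\Sym(M_G)$ is exactly the binomial edge ideal $J_G$, that is, the ideal generated by the binomials $x_iy_j - x_jy_i$ with $\{i,j\}\in E(G)$, where now the $y$'s play the role of a second set of variables. Since here $M_G$ is generated in a single degree (all $f_i$ have degree $0$, or rather the generators $e_i$ have degree $1$ and are equigenerated), Theorem~\ref{exactlythesame} applies: if $J$ satisfies the $x$-condition, then $A_k=\Sym_k(M_G)$ has linear resolution for all $k$. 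This gives (i) $\Rightarrow$ (ii) immediately. The implication (ii) $\Rightarrow$ (iii) is trivial since $M_G = \Sym_1(M_G)$.

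\medskip

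For (i) $\Rightarrow$ (iv) and (iv) $\Rightarrow$ (iii) I would invoke the known structure of $\ini_<(J_G)$. By \cite[Theorem 2.1]{HHHKR} (and \cite[Theorem 3.2]{Oht}), with respect to the lexicographic order induced by a suitable labeling, a Gröbner basis of $J_G$ is given by certain binomials attached to \emph{admissible paths} in $G$, and the corresponding initial monomial of the binomial associated to a path $i_0, i_1,\ldots, i_t$ from $i$ to $j$ (with $i<j$) is $x_{i_1}x_{i_2}\cdots x_{i_{t-1}}\, y_i y_j$ — a monomial that is linear in $y$'s only when the path has length $1$, i.e., is an edge, but in general involves several interior $x$-variables only. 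Wait — in our bigraded setup the roles are: $S=K[x_1,\ldots,x_n]$ is $A_0$ and the $y_j$ correspond to the module generators $f_j$. One must be careful which set of variables is the ``$x$'' in the $x$-condition: it is the $x_i$. So the $x$-condition demands that each minimal generator of $\ini_<(J_G)$ has $x$-degree at most $1$. For an admissible path of length $\geq 2$ the initial monomial has $x$-degree $t-1 \geq 1$; for $t=2$ this is $x_{i_1}y_iy_j$, which is fine, but longer admissible paths give $x$-degree $\geq 2$. Hence the $x$-condition holds if and only if $G$ has no admissible path of length $\geq 3$, and the combinatorial translation of \emph{that} is precisely that $G$ is chordal — this is exactly the content of the characterization that the initial ideal of $J_G$ is generated in degree $2$ iff $G$ is chordal, equivalently $J_G$ has a quadratic Gröbner basis iff $G$ is chordal (\cite{HHHKR}). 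Therefore (i) $\iff$ $G$ chordal, giving both (i) $\Rightarrow$ (iv) and, combined with the earlier implications, closing the loop once we also have (iv) $\Rightarrow$ (iii).

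\medskip

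For (iv) $\Rightarrow$ (iii) directly: if $G$ is chordal then by the above $\ini_<(J_G)$ is generated by quadratic monomials, so in particular $J$ satisfies the $x$-condition, whence by Theorem~\ref{exactlythesame} all $\Sym_k(M_G)$, and in particular $M_G=\Sym_1(M_G)$, have linear resolution. Thus the cycle (iii) $\Rightarrow$ (ii) $\Rightarrow$ (i) $\Rightarrow$ (iv) $\Rightarrow$ (iii) need only have the first two arrows checked, which are trivial, plus the two arrows passing through the combinatorics of $\ini_<(J_G)$. Alternatively one could run the implications as (iii) $\Rightarrow$ (iv) $\Rightarrow$ (i) $\Rightarrow$ (ii) $\Rightarrow$ (iii): for (iii) $\Rightarrow$ (iv) one argues contrapositively, that if $G$ has an induced cycle $C$ of length $\geq 4$ then the restriction $M_{G_W}$ (for $W=V(C)$) is a direct summand-like quotient whose resolution is not linear — here one uses that $M_{G_W} = F_W/C_{G_W}$ and that a non-chordal graph forces a non-linear syzygy, e.g. by computing $\reg M_{G_W}\geq 2$ from the known regularity of binomial edge ideals of cycles or by a direct Koszul-homology computation.

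\medskip

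\textbf{The main obstacle} I anticipate is the careful bookkeeping in the implication (i) $\Rightarrow$ (iv): one must (a) correctly identify $J=\Ker(\varphi)$ with the binomial edge ideal $J_G$ under the present bigrading — straightforward from the presentation of $C_G$ by the binomials $x_ie_j-x_je_i$, so that $h_{\{i,j\}} = x_iy_j-x_jy_i$ — and (b) match the abstract $x$-condition (every minimal monomial generator of $\ini_<(J)$ is $v w$ with $v\in S$, $\deg v\leq 1$, $w\in K[y_1,\ldots,y_m]$) against the explicit path-binomial description of $\ini_<(J_G)$, and then cite the precise combinatorial equivalence with chordality. The subtlety is that the $x$-condition only constrains the $x$-degree of the generators, not the $y$-degree, so one needs the full statement of \cite[Theorem 2.1]{HHHKR} about which monomials appear — in particular that \emph{every} admissible path of length $\geq 3$ contributes a generator of $x$-degree $\geq 2$ that cannot be removed, so that the failure of chordality genuinely obstructs the $x$-condition and is not merely an artifact of a non-reduced Gröbner basis. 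Once these identifications are pinned down, the rest is an immediate application of Theorem~\ref{exactlythesame} and the trivial implication $\Sym_1(M_G)=M_G$.
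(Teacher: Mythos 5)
Your high-level skeleton — proving (i)$\Rightarrow$(ii) via Theorem~\ref{exactlythesame}, (ii)$\Rightarrow$(iii) trivially, and then closing the loop with the admissible-path description of the Gr\"obner basis of the binomial edge ideal — is the right starting point, but the combinatorial heart of your argument rests on a misreading of \cite[Theorem 2.1]{HHHKR}. You assert that the initial monomial of the binomial attached to an admissible path $i=i_0,i_1,\ldots,i_t=j$ (with $i<j$) is $x_{i_1}\cdots x_{i_{t-1}}y_iy_j$. It is not: the correct monomial is $u_\pi\, x_iy_j$ where $u_\pi = \bigl(\prod_{i_k>j}x_{i_k}\bigr)\bigl(\prod_{i_\ell<i}y_{i_\ell}\bigr)$. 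Interior vertices below $i$ contribute \emph{$y$-factors}, not $x$-factors, so the $x$-degree of the generator is $1+|\{k: i_k>j\}|$, not $t-1$. Consequently the $x$-condition is \emph{not} ``no admissible path of length $\geq 3$''; it is the requirement that every interior vertex of every admissible path lies below $i$. A long admissible path with all interior vertices less than $i$ has $x$-degree exactly $1$ and is harmless. The condition ``no admissible path of length $\geq 3$'' (equivalently, $\ini_<(J_G)$ quadratic) characterizes \emph{closed} graphs, a strictly smaller class than chordal graphs. Your statements that ``the $x$-condition holds iff $G$ has no admissible path of length $\geq 3$'' and that ``if $G$ is chordal then $\ini_<(J_G)$ is generated by quadratic monomials'' are both false, and with them both of your proposed arguments for (i)$\Leftrightarrow$(iv) collapse.

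This also means your logical chain does not actually close. Once the equivalence (i)$\Leftrightarrow$(iv) is withdrawn, what remains in your ``main'' route is (i)$\Rightarrow$(ii)$\Rightarrow$(iii), which is not a cycle. The paper's structure is (i)$\Rightarrow$(ii)$\Rightarrow$(iii)$\Rightarrow$(iv)$\Rightarrow$(i), where the two hard implications are handled quite differently from what you sketch. For (iv)$\Rightarrow$(i) the paper fixes a perfect elimination ordering (Dirac) and proves by induction on $n$ that, with that labeling, no admissible path has an interior vertex $i_k>j$; this is condition $(*)$ and it is \emph{not} the quadraticity of the Gr\"obner basis. For (iii)$\Rightarrow$(iv) the paper does not argue via regularity of binomial edge ideals of cycles (that would control $\reg(J_G)$, not $\reg(M_G)$); instead it equips $M_G$ with a $\ZZ^n$-grading, restricts the minimal $\ZZ^n$-graded resolution of $M_G$ to the multidegrees supported on the vertex set of an induced $r$-cycle $C_r$ ($r\geq 4$) to obtain an acyclic subcomplex with zeroth homology $M_{C_r}$, and then computes the minimal resolution of $M_{C_r}$ explicitly as $0\to S(-r)\to S^r(-2)\to S^r(-1)\to M_{C_r}\to 0$ using the Buchsbaum--Eisenbud acyclicity criterion, exhibiting a non-linear syzygy in degree $r$. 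Your alternative sketch of (iii)$\Rightarrow$(iv) gestures in this direction but omits both the $\ZZ^n$-grading mechanism that transfers the non-linearity from $M_{C_r}$ to $M_G$ and the explicit resolution computation, so as written it is not a proof.
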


\begin{proof}
(i) \implies (ii)  follows from Theorem~\ref{exactlythesame}, and (ii)\implies (iii) is trivial.

(iii) \implies (iv): We define a $\ZZ^n$-grading on $M_G$. In order to simplify notation we write  $\deg f=\xb^{\ab}$ when $f$ has $\ZZ^n$-degree  $\ab\in \ZZ^n$. By using this notation, we set $\deg e_i=x_i$   for $i=1,\ldots,n$. Then the generators $c_{ij}=x_ie_j-x_je_i$ of $C_G$ are of degree $x_ix_j$. Thus $M_G$ is naturally $\ZZ^n$-graded and hence  admits a  minimal $\ZZ^n$-graded free $S$-resolution $(\FF, \partial)$.

Suppose $G$ is not chordal. Then for some $r$ with $4\leq  r\leq n$ there exists an induced $r$-cycle $C_r$ of $G$. For simplicity we may assume that $[r]$ is the vertex set of $C_r$. Let $F_i=\Dirsum_{j=1}^{\beta_i}S(-\ab_{ij})$. We define
\[
H_i=\Dirsum_{j,\;\supp(\xb^{\ab_{ij}})\subseteq [r]}S(-\ab_{ij}).
\]
Restricting for each $i$ the chain maps $\partial_i$ of $\FF$ to $H_i$ we obtain an acyclic subcomplex $\HH$ of $\FF$ with $H_0(\HH)=M_{C_r}$.
We will show that the $\ZZ$-graded free resolution of $M_{C_r}$  is of the form
\begin{eqnarray*}
\label{resolution}
  0\to S(-r)\stackrel{\varphi_2}{\To} S^r(-2)\stackrel{\varphi_1}{\To} S^r(-1)\To M_{C_r}\to 0.
\end{eqnarray*}
In particular, $M_{C_r}$ does not have linear resolution. Therefore $M_G$ does not have linear resolution, a contradiction.

The map  $\varphi_1$ is given by the matrix
\[
A=
\begin{pmatrix}
-x_2& 0 &\ldots &\ldots &0& x_r\\
x_1 & -x_3 & 0 &&\vdots& 0 \\
0 & x_2 & \ddots&&\vdots& \vdots \\
\vdots & &\ddots& \ddots&0&\vdots\\
\vdots &\ldots &&\ddots&-x_r & 0\\
0& \ldots&&0& x_{r-1} &-x_1\\
\end{pmatrix}.
\]
The columns of this matrix correspond to the generating elements of $C_{G'}$ where $G'=C_r$. We define $\varphi_2$ by the column vector
\[
(u_1, u_2, \ldots,  u_r)^{\sf T},
\]
where  $u_i =(x_1x_2\cdots x_n) /(x_ix_{i+1})$ for $i=1,\ldots, r-1$,  and  $u_r=(x_1x_2\cdots x_n) /(x_1x_{r})$. With the definitions given,
\begin{eqnarray*}
\label{resolution}
\GG\:\;  0\to S(-r)\stackrel{\varphi_2}{\To} S^r(-2)\stackrel{\varphi_1}{\To} S^r(-1)\to 0
\end{eqnarray*}
is a complex with $H_0(\GG)=M_{C_r}$.

It remains to be shown that $\GG$ is acyclic. To  show this we apply the  Buchsbaum-Eisenbud acyclicity criterion \cite{BE} (see also \cite[Theorem 9.1.6]{BH}). For a module homomorphism $\varphi\: F\to G$ between finitely generated free $S$-modules which is represented by a  matrix $B$, we denote by $I_t(\varphi)$ the ideal generated by the $t$-minors of $B$. This definition does not depend on the choice of the matrix representing $\varphi$. Now the Buchsbaum-Eisenbud acyclicity criterion applied to our situation says that $\GG$ is acyclic if $\grade(I_{r-1}( \varphi_1))\geq 1$ and $\grade(I_{1}( \varphi_2))\geq 2$. We see that $\grade(I_{r-1}( \varphi_1))\geq 1$, because
$x_1\cdots x_{r-1}\in I_{r-1}( \varphi_1)$, so that $I_{r-1}( \varphi_1)\neq 0$.  Since $I_{1}( \varphi_2)$ is the ideal generated by $u_1,\ldots,u_r$ and since $\gcd(u_1,\ldots,u_r)=1$, it follows that $\grade(I_{1}( \varphi_2))\geq 2$.

(iv) \implies (i):  Note that $\Sym(M_G)\iso T/J$, where $T=K[x_1,\ldots,x_n,y_1,\ldots,y_n]$ and $$J=(x_iy_j-x_jy_i:\; \{i,j\}\in E(G))$$ is the binomial edge  ideal  of $G$, as introduced in \cite{HHHKR} and \cite{Oht}. We choose the lexicographic order induced by $x_1>x_2>\cdots >x_n>y_1>y_2> \cdots >y_n$ and show that $\ini_<(J)$ is generated by monomials of $x$-degree $\leq 1$.  The Gr\"obner basis of $J$ is known by \cite{HHHKR} and \cite{Oht}. Here we refer to \cite[Theorem 2.1]{HHHKR} or  \cite[Theorem 7.11]{HHO} and recall the following:
let $G$ be a simple graph on $[n]$,
and let $i$ and $j$ be two vertices of $G$ with $i<j$.
A path $i=i_0,i_1,\ldots,i_r=j$ from $i$ to $j$
is called \emph{admissible}, if
\begin{enumerate}
\item[(a)] $i_k\neq i_\ell$  for $k\neq \ell$;
\item[(b)] for each $k=1,\ldots,r-1$ one has either $i_k<i$ or $i_k>j$;
\item[(c)] for any proper subset $\{j_1,\ldots,j_s\}$
of $\{i_1,\ldots,i_{r-1}\}$, the sequence $i,j_1,\ldots,j_s,j$
is not a path.
\end{enumerate}
Given an admissible path
\[
\pi: i=i_0,i_1,\ldots,i_r=j
\]
from $i$ to $j$, where $i < j$, we associate the monomial
\[
u_{\pi}=(\prod_{i_k>j}x_{i_k}) (\prod_{i_\ell<i}y_{i_\ell}).
\]
Then the set of binomials
\[
{\mathcal G}
= \Union_{i<j} \,
\{\,u_{\pi}f_{ij}\,:\;\text{$\pi$ is an admissible path from $i$ to $j$}\,\}
\]
is the reduced Gr\"obner basis of $J$ with respect to $<$, where $f_{ij}=x_iy_j-x_jy_i$. In view of this result we must show that the following condition $(*)$ is satisfied:  if $i<j$ and $\pi: i=i_0,i_1,\ldots,i_r=j$  is an admissible path from $i$ to $j$, then $i_k<i$ for all $k=1,\ldots, r-1$.

Now we use that $G$ is a chordal graph, which by a theorem by Dirac \cite{} implies that $G$
has a perfect elimination ordering. In other words, there is a labeling  $v_1,\ldots,v_n$ on the vertices of $G$ such that each $v_j$ is a simplicial vertex of the induced subgraph $G_j=G_{\{v_1,\ldots,v_j\}}$. By, definition $v_j$ is simplicial on $G_j$,  if the induced subgraph of $G$ on the vertex set $\{v_{k}\:\; 1\leq k<j, \{v_k,v_j\}\in E(G)\}$ is a complete graph. By renaming the vertices we may assume that $v_i=i$ for $i=1,\ldots,n$.

Let $i<j$ and $\pi: i=i_0,i_1,\ldots,i_r=j$ be an admissible path between $i$ and $j$. We prove $(*)$ by induction on $n$. First assume that $j<n$.  If $i_k=n$ for some $k$ with $1\leq k<r$, then  $\{i_{k-1},n\}$ and $\{i_{k+1}, n\}$ are edges of $G$ which implies that $\{i_{k-1}, i_{k+1}\}$ is an edge of $G$, since $n$ is a simplicial vertex of $G$. This contradicts condition (c) for admissible  paths. So $\pi$ is an admissible path in the induced subgraph $G_{[n-1]}$.  Since $G_{[n-1]}$ is chordal (by Dirac), by induction hypothesis condition $(*)$ is satisfied when $j<n$. On the other hand if $j=n$, then $i_k<n$ for $k=1,\ldots,r-1$, and since  $\pi$ is admissible, it follows that $i_k<i$ for $k=1,\ldots,r-1$, as desired.
\end{proof}

The following result gives some additional information of the symmetric powers of the modules $M_G$.

\begin{Theorem}
\label{additional}
Let $G$ be a chordal graph on $[n]$. We may assume that the vertices of $G$ are labeled such that for $i=1,\ldots,n$, the vertex $i$ is a simplicial vertex of the induced subgraph $G_{[i]}$ for all $i$.  Then
\begin{enumerate}
\item[{\em (a)}] $\depth M_G\geq n- \max\{|N_{G_{[j]}}(j)|\:\; j=1,\ldots,n\}$.

\item[{\em (b)}] $\depth \Sym_k(M_G)\geq \depth \Sym_{k+1}(M_G)\geq 1$ for all $k\geq 1$.

\item[{\em (c)}] $1\leq \lim_{k\to \infty} \depth \Sym_k(M_G)\leq \max\{|A|-c(A)\:\; A\subset [n]\}$, where $c(A)$ denotes number of connected components of $G_{[n]\setminus A}$.
\end{enumerate}
\end{Theorem}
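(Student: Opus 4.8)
Here is how I would approach Theorem~\ref{additional}. The three parts are intertwined: once (b) is proved, $(\depth\Sym_k(M_G))_{k\ge1}$ is a non-increasing sequence of non-negative integers and therefore stabilizes, so the limit in (c) exists. Throughout I would use the identification $\Sym(M_G)\cong T/J_G$ from the proof of Theorem~\ref{mg}, where $T=K[x_1,\dots,x_n,y_1,\dots,y_n]$ and $J_G=(x_iy_j-x_jy_i:\{i,j\}\in E(G))$, together with two features of the chordal case with the given perfect elimination labelling: first, the reduced Gr\"obner basis of $J_G$ for the lexicographic order $x_1>\cdots>x_n>y_1>\cdots>y_n$ consists of the binomials $u_\pi f_{ij}$ with $u_\pi\in K[y_1,\dots,y_n]$ a monomial (this is exactly condition $(*)$ in the proof of Theorem~\ref{mg}), so the Gr\"obner basis, and hence $\ini_<(J_G)$, is contained in $(x_1,\dots,x_n)T$; second, the primary decomposition $J_G=\bigcap_A P_A(G)$, $P_A(G)=(x_i,y_i:i\in A)+J_{\widetilde G_{[n]\setminus A}}$ of \cite{HHHKR}, for which $P_A(G)\cap S=(x_i:i\in A)$.

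For (a), since $S$ is regular, Auslander--Buchsbaum turns the claim into $\projdim_S M_G\le c_G:=\max_j|N_{G_{[j]}}(j)|$, which I would prove by induction on $n$ along the elimination order. Put $H=G_{[n-1]}$ (chordal, with $c_H=\max_{j<n}|N_{G_{[j]}}(j)|$) and $N=N_G(n)$, a clique, of size $d$. Inside $F=\bigoplus_{i=1}^nSe_i$ write $C_G=C_1+C_2$, with $C_1$ generated by the $x_ie_j-x_je_i$, $\{i,j\}\in E(H)$, and $C_2$ by the $x_ie_n-x_ne_i$, $i\in N$. A direct check shows $C_2\cong S^d$ is free. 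The crucial identification is $C_1\cap C_2=x_nC_N$ with $C_N=(x_ie_j-x_je_i:i,j\in N)$: an element of $C_1\cap C_2$ has the form $\sum_{i\in N}a_i(x_ie_n-x_ne_i)$ with $\sum_ia_ix_i=0$, so, $(x_i)_{i\in N}$ being a regular sequence, it equals $-x_n\sum_ia_ie_i$ for a Koszul syzygy $(a_i)$, whence $\sum_ia_ie_i\in C_N$; the inclusion $C_N\subseteq C_1$ used here is where chordality is essential, since every pair of vertices of $N$ is an edge of $H$. Moreover $C_N$ is isomorphic to the first syzygy module of $(x_i)_{i\in N}$, so $\projdim C_N=d-2$ (and $C_N=0$ when $d\le1$), and multiplication by $x_n$ is injective on it. Feeding the inductive bound $\projdim C_1=\projdim_{S'}M_H-1\le c_H-1$ (here $S'=K[x_1,\dots,x_{n-1}]$) and this Koszul estimate into the Mayer--Vietoris sequence $0\to C_1\cap C_2\to C_1\oplus C_2\to C_G\to0$ gives $\projdim C_G\le\max(c_H-1,d-1)$, hence $\projdim M_G=\projdim C_G+1\le\max(c_H,d)=c_G$.

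For (b), the lower bound is the easy half: a generic linear form $\ell=\sum_i\lambda_ix_i$ lies in no minimal prime of $J_G$, since $P_A(G)\cap S=(x_i:i\in A)$ is a proper ideal of $\mm$ for every cut set $A$ (no cut set equals $[n]$); thus $\ell$ is a nonzerodivisor on $T/J_G$, hence on each $\Sym_k(M_G)$, so $\depth\Sym_k(M_G)\ge1$. For the monotonicity I would use the linear-quotient machinery: by Theorem~\ref{main}, $\Sym_k(M_G)=A_k$ has linear quotients with respect to the system $h_1,\dots,h_s$ constructed there, all of degree $k$; since the Gr\"obner basis of $J_G$ lies in $(x_1,\dots,x_n)T$, Proposition~\ref{carisokay} shows this is a \emph{minimal} generating set, so Proposition~\ref{generalbetti}(b) applies, and combined with Corollary~\ref{maybe} and the fact that $\ini_<(J_G)\subseteq(x_1,\dots,x_n)T$ makes every monomial in $y_1,\dots,y_n$ standard, one gets
\[
\projdim_S\Sym_k(M_G)=\max_{w}\ \#\{\,i : x_iw\in\ini_<(J_G)\,\},
\]
where $w$ runs over the monomials of degree $k$ in $y_1,\dots,y_n$. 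If $w$ realizes this maximum in degree $k$, then $w'=wy_1$ is a standard monomial of degree $k+1$ and, $\ini_<(J_G)$ being a monomial ideal, $\{i:x_iw\in\ini_<(J_G)\}\subseteq\{i:x_iw'\in\ini_<(J_G)\}$; hence $\projdim\Sym_k(M_G)\le\projdim\Sym_{k+1}(M_G)$, and Auslander--Buchsbaum gives $\depth\Sym_k(M_G)\ge\depth\Sym_{k+1}(M_G)$.

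For (c), the limit exists by (b), and, as $\mm\subseteq S$ and local cohomology commutes with direct sums, $\Coh{i}{\Sym(M_G)}=\bigoplus_{k\ge0}\Coh{i}{\Sym_k(M_G)}$; hence $\lim_k\depth\Sym_k(M_G)=\depth_S\Sym(M_G)=n-M_\infty$, where $M_\infty=\max_w\#\{i:x_iw\in\ini_<(J_G)\}$ over all $y$-monomials $w$ (the maximum is attained since the quantities are non-decreasing in $\deg w$ and bounded by $n$). Since no Gr\"obner basis element of $J_G$ has leading term divisible by $x_n$ and by no other $x_i$, we get $M_\infty\le n-1$, recovering the lower bound. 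For the upper bound I would read off the set $\{i:x_iw\in\ini_<(J_G)\}$ for $w$ of large degree: it consists of the $i$ admitting an admissible path $i\to j$ with $j>i$, and this set, hence the defect $n-\lim_k\depth\Sym_k(M_G)$, is controlled by the indices occurring in the minimal primes $P_A(G)$; translating this gives $\lim_k\depth\Sym_k(M_G)\le\max_A\{|A|-c(A)\}$ (already the crude estimate $\depth_S\Sym_k(M_G)\le\dim_S\Sym_k(M_G)=n$ — valid since $\Ann_S M_G=0$ — suffices, as $\max_A\{|A|-c(A)\}=n$). The main obstacles are the identification $C_1\cap C_2=x_nC_N$ in part~(a), which is the one place chordality is used beyond Theorem~\ref{mg}, and making the combinatorial passage from $M_\infty$ to $\max_A\{|A|-c(A)\}$ in part~(c) precise.
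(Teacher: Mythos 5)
Your approach is genuinely different from the paper's in all three parts; (a) and (b) are essentially sound alternatives, while (c) is left incomplete.

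For (a) the paper is much shorter: from the Gr\"obner basis of $J_G$ it reads off $\ini_<(C_G)=\bigoplus_{j=1}^n(x_i:i\in N_{G_{[j]}}(j))\,e_j$, computes the projective dimension of this initial module directly, and invokes upper semicontinuity of graded Betti numbers under passage to the initial module. Your Mayer--Vietoris induction along the perfect elimination order, built on the identification $C_1\cap C_2=x_nC_N$ with $C_N$ the Koszul syzygy module of the clique $N=N_G(n)$, is a correct alternative that bypasses Gr\"obner bases entirely at the cost of more work; both uses of chordality are ultimately the same (the simpliciality of the last vertex).

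For (b) the paper uses $x_n$ for the lower bound and, for the monotonicity, the multiplication $f_1\cdot(-)\colon N_k\to N_{k+1}$ followed by the long exact Tor sequence of $0\to N_k(-1)\to N_{k+1}\to N_{k+1}/f_1N_k\to 0$, using $k$-linearity to kill the connecting maps. Your route --- derive $\projdim_S\Sym_k(M_G)=\max_w\#\{i:x_iw\in\ini_<(J_G)\}$ from Propositions~\ref{generalbetti}, \ref{carisokay}, Corollary~\ref{maybe} and the fact that every $y$-monomial is standard, then observe this is monotone in $k$ because $\ini_<(J_G)$ is a monomial ideal --- is a legitimate and, if anything, cleaner argument. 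One gap you should close: Theorem~\ref{main}, Corollary~\ref{maybe} and Proposition~\ref{carisokay} require a monomial order of type~(\ref{monomialorder}), which compares $y$-parts before $x$-parts, whereas the Gr\"obner basis you invoke from Theorem~\ref{mg} is computed for lex with $x_1>\cdots>x_n>y_1>\cdots>y_n$, which compares $x$-parts first. These are not the same order. They do produce the same initial ideal here, since each $u_\pi f_{ij}$ has the same leading term $u_\pi x_iy_j$ under both (for $i<j$, both $x_i>x_j$ and, with the natural choice of $<'$, $y_j>'y_i$), and a bigraded Hilbert function count then forces the two initial ideals to coincide; but this needs to be said.

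For (c) the paper simply cites the inequality $\lim_k\depth\Sym_k(M_G)\le\dim\Sym(M_G)-\dim\bigl(\Sym(M_G)/\mm\Sym(M_G)\bigr)$ from \cite{HH1} and plugs in the dimension formula for binomial edge ideals. Your reduction $\lim_k\depth\Sym_k(M_G)=\depth_S\Sym(M_G)=n-M_\infty$ is correct (given (b)), but the ``translating'' step from $M_\infty$ to a cut-set bound is only gestured at, and you then retreat to the trivial bound $\depth\le n$. Your observation that $\max_A\{|A|-c(A)\}=n$ (take $A=[n]$) and hence the stated bound is vacuous is sharp --- indeed the paper's own computation yields $\max_A\{c(A)-|A|\}$ and the printed statement appears to carry a sign error --- but this does not substitute for a proof of the intended, nontrivial upper bound, which the paper does provide by citing \cite{HH1}.
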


\begin{proof}
In the proof of Theorem~\ref{mg} we have seen  that $\Sym(M_G)$ is naturally $\ZZ^n$-graded. The way how the $\ZZ^n$-grading is defined it follows  that all products $f_{i_1}\cdots f_{i_k}$ are homogeneous with respect to this grading. This implies that all the colon ideals which we consider in the symmetric powers of $M_G$ are monomial ideals. We will use this fact in the proof of (a) and (b).

Proof of (a):   $\Sym(M_G)=T/J$,  where $T=K[x_1,\ldots,x_n,y_1,\ldots,y_n]$ and $J=(x_iy_j-x_jy_i\:\; \{i,j\}\in E(G))$.  Let $f_1,\ldots, f_n$ be the generators of $M_G$ corresponding  to  $y_1,\ldots, y_n$. Then, as shown in the proof of Theorem ~\ref{mg}, we have $x_iy_j\in \ini_<(J)$ if only if $i<j$ and $\{i,j\}\in E(G)$. Thus, $\ini_<(C_G)= \Dirsum_{i=1}^nI_je_j$, where $I_j=(x_i\:\; i\in N_{G_{[j]}(j)})$. It follows that $\projdim F/\ini_<(C_G)=\max\{|N_{G_{[j]}}(j)|\:\; j=1,\ldots,n\}$. By using the well-known upper semicontinuity of graded Betti numbers we obtain
\begin{eqnarray*}
\depth M_G&=& n-\projdim F/C_G \geq n-\projdim F/\ini_<(C_G)\\
& =&n-\max\{|N_{G_{[j]}}(j)|\:\; j=1,\ldots,n\}.
\end{eqnarray*}

Proof of (b): It follows from the description of the Gr\"obner basis of $J$ as given in the proof of Theorem~\ref{mg} that (i) $\ini_<(J)\subseteq (x_1,\ldots,x_{n-1})$, and (ii) $\ini_<(J)\subseteq (y_2,\ldots ,y_n)$. Now (i) implies that $x_n$  is  non-zerodivisor on $T/\ini_<(J)$, and hence also a non-zerodivisor on $T/J=\Sym(M_G)$.  Thus the multiplication map $$\mu_{x_n}\: \Sym(M_G)\to \Sym(M_G)$$ is injective. For each $k\geq 0$, $\mu_{x_n}(\Sym_k(M_G))\subseteq \Sym_k(M_G)$. Hence $\mu_{x_n}$ restricts to the multiplication map
$\mu_{x_n}\:\Sym_k(M_G)\to \Sym_k(M_G)$ which is also injective. This shows that $x_n$ is non-zerodivisor for each of the $S$-modules $\Sym_k(M_G)$, and hence  $\depth \Sym_k(M_G)\geq 1$ for all $k$.

In order to simplify notation we set $N_k=\Sym_k(M_G)$. It remains to be shown that $\depth N_k\geq \depth N_{k+1}$. To show this we use  (ii) which implies   that $y_1$ is a non-zerovisor on $T/J=\Sym(M_G)$.
In other words, the $S$-linear map
\[
N_k\to N_{k+1}, \quad n\mapsto f_1n
\]
is injective for all $k$.  We may assign to $T$ the standard grading. Then each $N_k$ is generated in degree $k$, and by Theorem~\ref{mg}, $N_k$ has $k$-linear resolution. The short exact sequence of graded $S$-modules
\[
0\to N_k(-1)\to N_{k+1}\to N_{k+1}/f_1N_k\to 0
\]
induces the long exact sequence
\begin{eqnarray*}
&&\cdots \To \Tor_{i+1}(K,N_{k+1}/f_1N_k)_{i+1+k}\To\\
 &&\Tor_{i}(K,N_k(-1))_{i+(k+1)}=
\Tor_{i}(K,N_k)_{i+k}\To\Tor_{i}(K, N_{k+1})_{i+(k+1)}\To\cdots
\end{eqnarray*}

Since  the initial degree of $N_{k+1}/f_1N_k$ is bigger than $k$, it follows that
$$\Tor_{i+1}(K,N_{k+1}/f_1N_k)_{i+1+k}=0.$$ Hence $\Tor_{i}(K,N_k)_{i+k}\To\Tor_{i}(K, N_{k+1})_{i+(k+1)}$ is injective.
This shows that $\beta_i(N_k)\leq \beta_i(N_{k+1})$ for all $i\geq 0$ and implies that $\depth N_k\geq \depth N_{k+1}$.

Proof of (c):  By (b),  $\lim_{k\to \infty} \depth \Sym_k(M_G)\geq 1$. On the other hand, by \cite[Theorem 1.1]{HH1} we have
\begin{eqnarray}
\label{lim}
\lim_{k\to \infty} \depth \Sym_k(M_G)\leq \dim \Sym(M_G)-\dim  \Sym(M_G)/\mm \Sym(M_G),
\end{eqnarray}
where $\mm=(x_1,\ldots,x_n)$.
Since the defining ideal $J$ of $\Sym(M_G)$ is the binomial edge ideal of the graph $G$, it follows from \cite[Corollary 3.3]{HHHKR} or \cite[Corollary 7.17]{HHO} that $\dim \Sym(M_G)= \max\{n-|A|+c(A)\:\; A\subset [n]\}$. Thus,  since $\Sym(M_G)/\mm \Sym(M_G)\iso K[y_1,\ldots,y_n]$, the desired result follows from (\ref{lim}).
\end{proof}

\begin{Remark}{\em
As the proof shows, the inequality $$\lim_{k\to \infty} \depth \Sym_k(M_G)\leq \max\{|A|-c(A)\:\; A\subset [n]\}$$ is valid for any graph.
If $\Sym(M_G)$ is Cohen--Macaulay, then  equality holds. This follows from \cite[Theorem 1.1]{HH1}. It would be interesting to express in general this limit depth in terms of $G$. }
\end{Remark}

We conclude this section with the following observation about symmetric powers.

\begin{Proposition}
\label{observation}
Let $M$ be a finitely generated graded $S$-module and let  $\Sym(M)=T/J$ be a minimal presentation of the symmetric power algebra of $M$. If $J$ satisfies the $x$-condition with respect to a monomial order as in {\em (\ref{monomialorder})}, then all symmetric powers $\Sym_k(M)$ of $M$ are componentwise linear.
\end{Proposition}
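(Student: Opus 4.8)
The plan is to reduce Proposition~\ref{observation} to Corollary~\ref{weightedworks}. The obstacle is that Corollary~\ref{weightedworks} requires the monomial order $<'$ on $K[y_1,\ldots,y_m]$ to be the specific \emph{weighted} order defined in~(\ref{weight}), whereas here we only assume $J$ satisfies the $x$-condition with respect to \emph{some} monomial order as in~(\ref{monomialorder}); so the first task is to see that this flexibility is harmless. The key point is that the defining ideal $J$ of $\Sym(M)$ is generated by polynomials $h_i=\sum_{j=1}^m a_{ji}y_j$ that are \emph{linear in the $y$-variables}. Hence every element of $J$, and in particular every element of any reduced Gr\"obner basis of $J$, is a $T$-linear combination of the $h_i$ followed by reduction, so the $y$-degree of every monomial appearing is controlled; more to the point, $J$ is a bihomogeneous ideal with respect to the standard bigrading, and its graded pieces $J_{(\ast,1)}$ generate it. Thus for any two monomial orders $<'$ and $<''$ on $K[y_1,\ldots,y_m]$ that restrict compatibly, the $x$-degree profile of the generators of $\ini(J)$ is governed only by which $y$-monomial is selected as leading term within each fixed total $y$-degree class, and since $\Sym(M)$ is presented by $y$-linear forms, within $y$-degree $1$ the only choice is which single $y_j$ is leading.

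Concretely, I would argue as follows. Let $<_x$, $<'$, $<$ be the orders for which $J$ is assumed to satisfy the $x$-condition, and let $<^*$ be \emph{any} order on $K[y_1,\ldots,y_m]$ with $y_1<^*\cdots<^*y_m$ refining the partial information we need; form the weighted order $<'_{\mathrm{w}}$ from~(\ref{weight}) using $\wb=(d_1,\ldots,d_m)$ and $<^*$, and let $<_{\mathrm{w}}$ be the corresponding order on $T$ as in~(\ref{monomialorder}). First I would check that $\ini_{<_{\mathrm{w}}}(J)$ again satisfies the $x$-condition. This is where $y$-linearity of the generators is used: one shows that the reduced Gr\"obner basis of $J$ with respect to $<_{\mathrm{w}}$ consists of bihomogeneous polynomials whose $y$-degrees are the same as those appearing for $<$ (the $S$-module structure is unchanged), and in each $y$-degree the leading $y$-monomial is a monomial in at most that many $y$'s; the $x$-condition says the $x$-part of each such leading term has degree $\le 1$, and this is a statement about the $S$-coefficients $a_{ji}$ and the $S$-syzygies among them, which does not depend on the refinement chosen on the $y$-side. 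Alternatively, and perhaps more cleanly, one observes that $\ini_{<}(J)$ already having $x$-degrees $\le1$ forces, via the structure of binomial/linear-type presentations, that $\ini_{<'_{\mathrm{w}}\text{-induced}}(J)$ has the same property, since Buchberger's algorithm on $y$-linear generators only ever produces $S$-polynomials whose leading terms differ from the inputs by $S$-multiples, never increasing $x$-degree beyond what the first syzygies force.

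Once $J$ is known to satisfy the $x$-condition with respect to $<_{\mathrm{w}}$, the proof is immediate: apply Corollary~\ref{weightedworks} with this weighted order to conclude that $\Sym_k(M)=A_k$ is componentwise linear for all $k\ge1$, where $A=\Sym(M)$ is the bigraded algebra of Section~2 (it satisfies (i)--(iii) since $\Sym(M)=S[A_1]$ with $A_1=M$). I expect the main obstacle to be the bookkeeping in the first paragraph of the argument — namely making precise and rigorous the claim that the $x$-condition is insensitive to the choice of refinement $<'$ on the $y$-variables when the presenting ideal is linear in $y$. If that turns out to be delicate, the fallback is to inspect the proof of Theorem~\ref{main} directly: that proof constructs the linear-quotient sequence $h_1,\ldots,h_s$ ordered by $h_1^*<'\cdots<'h_s^*$ for whatever $<'$ is given, and the only place the degree hypothesis of Proposition~\ref{componentwise} is invoked is in Corollary~\ref{weightedworks}; but in the $\Sym(M)$ case one can instead note that these $h_i$ are a \emph{minimal} generating set of $\Sym_k(M)$ (because the presentation is minimal and linear in $y$, no standard monomial of degree $k$ is redundant — this is the analogue of Proposition~\ref{carisokay}, whose hypothesis $\mathcal G\subseteq(x_1,\ldots,x_n)T$ fails here but whose conclusion can be recovered from minimality of the presentation), and then invoke \cite[Theorem 8.2.15]{HH} to get componentwise linearity directly from linear quotients with respect to a minimal generating set, bypassing the degree ordering entirely.
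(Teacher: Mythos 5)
Your fallback paragraph is the right route and is essentially the paper's own proof, but it contains a factual error, and your primary approach (the first two paragraphs) is not sound; so let me take the two in turn.

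The primary approach tries to reduce to Corollary~\ref{weightedworks} by replacing the given $y$-order $<'$ with the weighted order of~(\ref{weight}) and arguing that the $x$-condition is preserved. This is not established. The $x$-condition is a property of the \emph{initial ideal}, and initial ideals change (often drastically) with the term order; nothing in the $y$-linearity of the generators of $J$ controls how the leading terms of the \emph{reduced Gr\"obner basis} behave under a change of order on the $y$-variables. The claim that "Buchberger's algorithm on $y$-linear generators only ever produces $S$-polynomials whose leading terms differ from the inputs by $S$-multiples, never increasing $x$-degree" is hand-waving and does not survive scrutiny: the reduction steps in Buchberger's algorithm freely multiply by $S$-monomials, and nothing prevents the $x$-degree of leading terms of new Gr\"obner basis elements from growing when you switch to a different order. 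You should abandon this reduction.

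The fallback is correct in outline, but you misstate the status of Proposition~\ref{carisokay}. You write that its hypothesis $\mathcal G\subseteq(x_1,\ldots,x_n)T$ "fails here but [its] conclusion can be recovered from minimality of the presentation." In fact the hypothesis \emph{holds}, and precisely because of the minimality. Since $f_1,\ldots,f_m$ is a minimal set of generators of $M$, the kernel of $F\to M$ is contained in $\mm F$; equivalently, in every relation $\sum_j a_{ji}f_j=0$ all coefficients $a_{ji}$ lie in $(x_1,\ldots,x_n)$. Hence every generator $h_i=\sum_j a_{ji}y_j$ of $J$ lies in $(x_1,\ldots,x_n)T$, so $J\subseteq(x_1,\ldots,x_n)T$, and therefore the reduced Gr\"obner basis $\mathcal G\subseteq J\subseteq(x_1,\ldots,x_n)T$. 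So Proposition~\ref{carisokay} applies directly: the linear-quotient sequence $h_1,\ldots,h_s$ produced in the proof of Theorem~\ref{main} is a minimal generating set of $\Sym_k(M)$, and \cite[Theorem 8.2.15]{HH} (the module version) then gives componentwise linearity. This is exactly the paper's argument; your "alternate recovery" of the conclusion of Proposition~\ref{carisokay} is unnecessary once you see that its hypothesis is satisfied.
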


\begin{proof}
Let $f_1,\ldots,f_m$ be a minimal set of generators of $M$. Then $\Sym(M)=T/J$,  where is generated by elements of the form $\sum_{i=1}^m g_iy_i$ with $\sum_{i=1}^m g_if_i=0$. Since $f_1,\ldots,f_m$ is a minimal set of generators of $M$ is follows that all $g_i$ belong to $(x_1,\ldots,x_n)$. Hence by Proposition~\ref{carisokay}, for each $k$,  the generators $h_1,\ldots,h_d$ of $\Sym_k(M)$ given in the proof of Theorem~\ref{main} which is a linear quotient sequence is also a  minimal set of generators of $\Sym_k(M)$. Now the desired result follows from \cite[Theorem 8.2.15]{HH} in the  version for modules.
\end{proof}

\section{Families of ideals whose powers are componentwise linear}

In this section we apply Theorem~\ref{main} and Theorem~\ref{monomialcase} to   the  vertex cover ideals of certain  families of graphs.   We show that the vertex cover ideals of these graphs satisfy the $x$-condition. This then implies that the powers of such  vertex cover ideals  are componentwise linear.

Let $G$ be a finite simple graph with the vertex set $V(G)$ and the edge set $E(G)$. A subset $C\subseteq V(G)$ is called a \emph{vertex cover} of $G$,
when it intersects any edge of $G$. Moreover, $C$ is called a \emph{minimal vertex cover} of $G$, if it is a vertex cover and no proper subset of $C$ is a vertex cover of $G$.
Let $C_1,\ldots,C_m$  be the minimal vertex covers of the graph $G$. The vertex cover ideal of $G$ is defined as
$I_G=(x_{C_1},\ldots,x_{C_m})$, where $x_{C_j}=\prod_{x_i\in C_j} x_i$.

\medskip
{\em Biclique graphs.}
Let $V = \{x_1, \ldots, x_p, y_1, \ldots, y_q, z_1, \ldots, z_r\}$ be a finite set.  We write $G_{p,q}$ for the complete graph on $\{x_1, \ldots, x_p, y_1, \ldots, y_q\}$ and $G_{p,r}$ for the complete graph on $\{x_1, \ldots, x_p, z_1, \ldots, z_r\}$.  We study the vertex cover ideal of the finite simple graph
\[
\Gamma_{p,q,r} = G_{p,q} \cup G_{p,r}
\]
on $V$.  For example, the finite simple graph $\Gamma_{2,3,2} = G_{2,3} \cup G_{2,2}$ is drawn in Figure~\ref{figure1}.

\begin{figure}[h]
\begin{center}
\includegraphics[height=4cm]{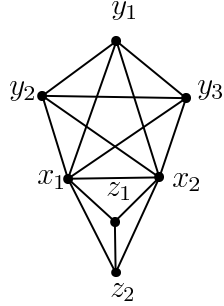}
\end{center}
\caption{The graph $G =\Gamma_{2,3,2} $}
\label{figure1}
\end{figure}

\noindent
Let $S = K[x_1, \ldots, x_p, y_1, \ldots, y_q, z_1, \ldots, z_r]$ denote the polynomial ring in $p+q+r$ variables over a field $K$.  Let
\[
\xb = x_1 \cdots x_p, \, \yb = y_1 \cdots y_q, \, \zb = z_1 \cdots z_r.
\]
It follows that the vertex cover ideal $I_{\Gamma_{p,q,r}} \subset S$ of $\Gamma_{p,q,r}$ is generated by the monomials
\begin{eqnarray*}
g_i &=& (\xb/x_i) \yb \zb, \, \, \, \, \, 1 \leq i \leq p, \\
f_j^k &=& \xb (\yb/y_j) (\zb/z_k), \, \, \, \, \, 1 \leq j \leq q, 1 \leq k \leq r.
\end{eqnarray*}
Let $\Rc(I_{\Gamma_{p,q,r}})$ denote the Rees algebra of $I_{\Gamma_{p,q,r}}$.  Thus $\Rc(I_{\Gamma_{p,q,r}})$ is the subring of $S[t]$ which is generated by the variables and monomials
\begin{eqnarray*}
x_i, \, \, y_j, \, \, z_k, \, \, g_i t, \, \, f_j^k t, \, \, \, \, \, 1 \leq i \leq p, \, 1 \leq j \leq q, \, 1 \leq k \leq r.
\end{eqnarray*}
Let $T = K[\{x_i, y_j, z_k, \varphi_i, \psi_j^k : 1 \leq i \leq p, \, 1 \leq j \leq q, \,1 \leq k \leq r\}]$ denote the polynomial ring over $K$ in $2p+(q+1)(r+1)-1$ variables and define the surjective ring homomorphism $\pi : T \to \Rc(I_{\Gamma_{p,q,r}})$ by setting
\[
\pi(x_i) = x_i, \pi(y_j) = y_j, \pi(z_k) = z_k, \pi(\varphi_i) = g_{i}t, \pi(\psi_j^k) = f_j^k t.
\]
Let $J_{\Gamma_{p,q,r}} \subset T$ denote the toric ideal of $\Rc(I_{\Gamma_{p,q,r}})$.  In other words, $J_{\Gamma_{p,q,r}}$ is the kernel of $\pi$.

Our mission is to compute a certain Gr\"obner basis of $J_{\Gamma_{p,q,r}}$.  Let $<_{\rm lex}$ denote the lexicographic order on $K[\{\varphi_i, \psi_j^k : 1 \leq i \leq p, \, 1 \leq j \leq q, \,1 \leq k \leq r\}]$ induced by the total order of the variables as follows:
\[
\varphi_1 > \cdots > \varphi_p > \psi_1^r > \psi_1^{r-1} > \cdots > \psi_1^1 > \psi_2^r > \cdots > \psi_2^1 > \cdots > \psi_q^r > \cdots > \psi_q^1
\]
and $<'_{\rm lex}$ the lexicographic order on $K[\{x_i, y_j, z_k : 1 \leq i \leq p, \, 1 \leq j \leq q, \,1 \leq k \leq r\}]$ induced by the total order of the variables as follows:
\[
y_q > \cdots > y_1 > z_1 > \cdots > z_r > x_p > \cdots > x_1.
\]
Furthermore, we introduce the monomial order $<$ on $T$ by setting that
\begin{eqnarray*}
&   & \prod_{i=1}^p x_i^{\alpha(x_i)} \prod_{j=1}^q y_j^{\alpha(y_j)} \prod_{k=1}^r z_k^{\alpha(z_k)} \prod_{i=1}^p \varphi_i^{\alpha(\varphi_i)} \prod_{j=1}^q \prod_{k=1}^r (\psi_{j}^{k})^{\alpha(\psi_{j}^{k})} \\
& < &
\prod_{i=1}^p x_i^{\beta(x_i)} \prod_{j=1}^q y_j^{\beta(y_j)} \prod_{k=1}^r z_k^{\beta(z_k)} \prod_{i=1}^p \varphi_i^{\beta(\varphi_i)} \prod_{j=1}^q \prod_{k=1}^r (\psi_{j}^{k})^{\beta(\psi_{j}^{k})}
\end{eqnarray*}
if and only if either
\[
\prod_{i=1}^p \varphi_i^{\alpha(\varphi_i)} \prod_{j=1}^q \prod_{k=1}^r (\psi_{j}^{k})^{\alpha(\psi_{j}^{k})}
<_{\rm lex}
\prod_{i=1}^p \varphi_i^{\beta(\varphi_i)} \prod_{j=1}^q \prod_{k=1}^r (\psi_{j}^{k})^{\beta(\psi_{j}^{k})}
\]
or
\[
\prod_{i=1}^p \varphi_i^{\alpha(\varphi_i)} \prod_{j=1}^q \prod_{k=1}^r (\psi_{j}^{k})^{\alpha(\psi_{j}^{k})}
=
\prod_{i=1}^p \varphi_i^{\beta(\varphi_i)} \prod_{j=1}^q \prod_{k=1}^r (\psi_{j}^{k})^{\beta(\psi_{j}^{k})},
\]
\[
\prod_{i=1}^p x_i^{\alpha(x_i)} \prod_{j=1}^q y_j^{\alpha(y_j)} \prod_{k=1}^r z_k^{\alpha(z_k)}
<'_{\rm lex}
\prod_{i=1}^p x_i^{\beta(x_i)} \prod_{j=1}^q y_j^{\beta(y_j)} \prod_{k=1}^r z_k^{\beta(z_k)}.
\]

\begin{Theorem}
\label{GB}
The reduced Gr\"obner basis of the toric ideal $J_{\Gamma_{p,q,r}}$ with respect to the monomial order $<$ on $T$ defined above, consists of the following binomials:
\begin{itemize}
\item
$x_i \varphi_i - y_q z_1 \psi_q^1, \, \, \, \, \, 1 \leq i \leq p$;
\item
$y_j \psi_j^k - y_q \psi_q^k, \, \, \, \, \, 1 \leq j < q, \, 1 \leq k \leq r$;
\item
$z_k \psi_j^k - z_1 \psi_j^1, \, \, \, \, \, 1 \leq j \leq q, \, 1 < k \leq r$;
\item
$\psi_{j_1}^{k_2} \psi_{j_2}^{k_1} - \psi_{j_1}^{k_1} \psi_{j_2}^{k_2}, \, \, \, \, \, 1 \leq j_1 < j_2 \leq q, \, 1 \leq k_1 < k_2 \leq r$.
\end{itemize}
Its initial ideal ${\rm in}_{<}(J_{p,q,r})$ is generated by the quadratic monomials as follows:
\begin{itemize}
\item
$x_i \varphi_i, \, \, \, \, \, 1 \leq i \leq p$;
\item
$y_j \psi_j^k, \, \, \, \, \, 1 \leq j < q, \, 1 \leq k \leq r$;
\item
$z_k \psi_j^k, \, \, \, \, \, 1 \leq j \leq q, \, 1 < k \leq r$;
\item
$\psi_{j_1}^{k_2} \psi_{j_2}^{k_1}, \, \, \, \, \, 1 \leq j_1 < j_2 \leq q, \, 1 \leq k_1 < k_2 \leq r$.
\end{itemize}
\end{Theorem}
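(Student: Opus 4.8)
To prove Theorem~\ref{GB} the plan is to verify directly that the displayed set $\mathcal{G}$ of binomials is a Gr\"obner basis of $J_{\Gamma_{p,q,r}}$ with the stated leading terms, and then that it is reduced. The first two steps are routine. For membership $\mathcal{G}\subset J_{\Gamma_{p,q,r}}$ one evaluates $\pi$ on each binomial using $\pi(\varphi_i)=(\xb/x_i)\yb\zb\,t$ and $\pi(\psi_j^k)=\xb(\yb/y_j)(\zb/z_k)\,t$; for instance $\pi(x_i\varphi_i)=\xb\yb\zb\,t=\pi(y_qz_1\psi_q^1)$, and the fourth family is the usual Segre-type relation $\pi(\psi_{j_1}^{k_2}\psi_{j_2}^{k_1})=\pi(\psi_{j_1}^{k_1}\psi_{j_2}^{k_2})$. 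For the leading terms one uses that $<$ compares first the $\varphi,\psi$-part by $<_{\rm lex}$ and only then the $x,y,z$-part: in each of the first three families the two terms already differ in their $\varphi,\psi$-part, and the listed monomial is the $<_{\rm lex}$-larger one because all $\varphi$'s precede all $\psi$'s, a $\psi$ with smaller lower index is $<_{\rm lex}$-larger, and within the same lower index a larger upper index is $<_{\rm lex}$-larger; in the fourth family $\psi_{j_1}^{k_2}$ is the $<_{\rm lex}$-largest of the four variables involved and occurs in the listed term. Hence the displayed quadratic monomials are exactly the leading terms of $\mathcal{G}$, and it suffices to prove that $\ini_<(J_{\Gamma_{p,q,r}})$ equals the ideal $L$ they generate.

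For the Gr\"obner property I would invoke the standard criterion for toric ideals: if $\pi$ is injective on the set $B$ of monomials of $T$ not lying in $L$, then $\mathcal{G}$ is a Gr\"obner basis of $J_{\Gamma_{p,q,r}}$ and $\ini_<(J_{\Gamma_{p,q,r}})=L$. Indeed, every $f\in J_{\Gamma_{p,q,r}}$ has a normal form modulo $\mathcal{G}$ supported on $B$; this normal form again lies in $J_{\Gamma_{p,q,r}}$, so it maps to $0$ under $\pi$, and since $\pi$ sends the monomials of $B$ to pairwise distinct monomials of $\Rc(I_{\Gamma_{p,q,r}})$ it must itself be $0$, i.e.\ $f$ reduces to $0$ modulo $\mathcal{G}$. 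So let $m=\xb^{\alpha}\yb^{\beta}\zb^{\gamma}\prod_i\varphi_i^{a_i}\prod_{j,k}(\psi_j^k)^{b_{jk}}$ be a monomial in $B$; being in $B$ means $(\mathrm A)$ $\alpha_ia_i=0$ for all $i$, $(\mathrm B)$ $\beta_jb_{jk}=0$ for all $j<q$ and all $k$, $(\mathrm C)$ $\gamma_kb_{jk}=0$ for all $k>1$ and all $j$, and $(\mathrm D)$ the multiset of pairs $(j,k)$ counted with multiplicity $b_{jk}$ contains no $(j_1,k_2),(j_2,k_1)$ with $j_1<j_2$ and $k_1<k_2$. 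A short computation gives that $\pi(m)$ has $t$-degree $N=\sum_ia_i+\sum_{j,k}b_{jk}$ and that the exponents of $x_i$, $y_j$, $z_k$ in $\pi(m)$ equal $\alpha_i+N-a_i$, $\beta_j+N-B_j$, $\gamma_k+N-C_k$, with $B_j=\sum_kb_{jk}$ and $C_k=\sum_jb_{jk}$. Thus from $\pi(m)$ one reads off $N$, $\alpha_i-a_i$, $\beta_j-B_j$ and $\gamma_k-C_k$; condition $(\mathrm A)$ then determines $\alpha_i$ and $a_i$, whence $\sum_ia_i$ and $\sum_{j,k}b_{jk}=N-\sum_ia_i$ are known; condition $(\mathrm B)$ determines $\beta_j,B_j$ for $j<q$, whence $B_q$ and $\beta_q$; condition $(\mathrm C)$ determines $\gamma_k,C_k$ for $k>1$, whence $C_1$ and $\gamma_1$.

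It remains to recover the matrix $b=(b_{jk})$ from its row sums $(B_j)$ and column sums $(C_k)$, and here condition $(\mathrm D)$ is the key point: it says precisely that the $\psi$-monomial is \emph{sorted}, i.e.\ its multiset of pairs can be listed so that both coordinate sequences are weakly increasing; but then the sequence of lower indices is forced to be $1^{B_1}2^{B_2}\cdots q^{B_q}$ and the sequence of upper indices is forced to be $1^{C_1}2^{C_2}\cdots r^{C_r}$, so $b$ is uniquely determined, the $\ell$-th pair being the $\ell$-th smallest lower index together with the $\ell$-th smallest upper index. Therefore $m$ is recovered from $\pi(m)$, $\pi|_B$ is injective, and $\mathcal{G}$ is a Gr\"obner basis of $J_{\Gamma_{p,q,r}}$ with $\ini_<(J_{\Gamma_{p,q,r}})=L$. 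Reducedness is then immediate: no listed leading term divides another (they involve disjoint sets of variables, or else are distinct squarefree quadratics in the $\psi$'s), and each trailing term $y_qz_1\psi_q^1$, $y_q\psi_q^k$, $z_1\psi_j^1$, $\psi_{j_1}^{k_1}\psi_{j_2}^{k_2}$ lies in $B$, as one checks against the four families.

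I expect the injectivity step to be the main obstacle: one must keep careful track of exactly which information about $m$ survives in $\pi(m)$, and recognise that condition $(\mathrm D)$ amounts to sortedness of the $\psi$-monomial and that a sorted $\psi$-monomial is pinned down by its two margin vectors. A more computational alternative is Buchberger's criterion: $S$-pairs whose leading terms have disjoint variable supports reduce to $0$ automatically, the $S$-pairs internal to the fourth family are the classical sorting relations, and each remaining mixed $S$-pair reduces to $0$ in a few steps using the binomials of $\mathcal{G}$ one at a time; but this forces one to re-identify the leading term after each reduction under the intricate order $<$, which is considerably more error-prone.
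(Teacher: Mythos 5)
Your proof is correct and follows essentially the same route as the paper: both invoke the standard criterion (\cite[Lemma 1.1]{AHH}) that the given binomials form a Gr\"obner basis precisely when $\pi$ is injective on the set of monomials avoiding the claimed leading terms, and both verify this injectivity by degree-bookkeeping in the exponents of $t$, the $x_i$, $y_j$, $z_k$. Your presentation reconstructs $m$ directly from $\pi(m)$ via the margin data $N$, $\alpha_i-a_i$, $\beta_j-B_j$, $\gamma_k-C_k$ together with conditions $(\mathrm A)$--$(\mathrm D)$, and pins down the $\psi$-part by the sortedness/margin-vector observation; this is a slightly cleaner packaging of the paper's coprime-cancellation argument, making explicit the sortedness that the paper leaves implicit in writing the $\psi$-factor with both index sequences weakly increasing.
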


\begin{proof}
Let $\Omega$ denote the set of those monomials $u \in T$ for which none of the monomials $x_i \varphi_i, \, y_j \psi_j^k, \, z_k \psi_j^k, \, \psi_{j_1}^{k_2} \psi_{j_2}^{k_1}$ listed above divides $u$.  By virtue of \cite[Lemma 1.1]{AHH} our task is to show that, for $u \in \Omega$ and $v \in \Omega$ with $u \neq v$, one has $\pi(u) \neq \pi(v)$.  Let
\begin{eqnarray*}
u & = & \prod_{i=1}^p x_i^{\alpha(x_i)} \prod_{j=1}^q y_j^{\alpha(y_j)} \prod_{k=1}^r z_k^{\alpha(z_k)} \prod_{i=1}^p \varphi_i^{\alpha(\varphi_i)} (\psi_{j_1}^{k_1}\psi_{j_2}^{k_2}\cdots\psi_{j_a}^{k_a}), \\
v & = & \prod_{i=1}^p x_i^{\beta(x_i)} \prod_{j=1}^q y_j^{\beta(y_j)} \prod_{k=1}^r z_k^{\beta(z_k)} \prod_{i=1}^p \varphi_i^{\beta(\varphi_i)}  (\psi_{j'_1}^{k'_1}\psi_{j'_2}^{k'_2}\cdots\psi_{j'_{a'}}^{k'_{a'}}),
\end{eqnarray*}
where
\[
j_1 \leq \cdots \leq j_a, \, k_1 \leq \cdots \leq k_a, \,
j'_1 \leq \cdots \leq j'_{a'}, \, k'_1 \leq \cdots \leq k'_{a'}.
\]

Now, suppose that $\pi(u) = \pi(v)$ and that none of the variables of $T$ divides both $u$ and $v$ simultaneously.  We then claim $u = v = 1$.  First, the comparison between the $t$-degree of $\pi(u)$ and that of $\pi(v)$ yields
\begin{eqnarray}
\label{aaa}
\sum_{i=1}^p \alpha(\varphi_i) + a = \sum_{i=1}^p \beta(\varphi_i) + a'.
\end{eqnarray}
Second, the comparison between the $x_\xi$-degree of $\pi(u)$ and that of $\pi(v)$ yields
\[
\alpha(x_\xi) + \sum_{i \neq \xi} \alpha(\varphi_i) + a = \beta(x_\xi) + \sum_{i \neq \xi} \beta(\varphi_i) + a', \, \, \, 1 \leq \xi \leq p.
\]
Let say, $\alpha(x_\xi) > 0$.  Then $\beta(x_\xi) = 0$ and $\alpha(\varphi_\xi) = 0$.  Thus
\begin{eqnarray*}
\sum_{i=1}^p \beta(\varphi_i) + a' & < &
\alpha(x_\xi) + \sum_{i=1}^p \alpha(\varphi_i) + a \\
& = &
\alpha(x_\xi) + \sum_{i \neq \xi} \alpha(\varphi_i) + a \\
& = & \sum_{i \neq \xi} \beta(\varphi_i) + a',
\end{eqnarray*}
which cannot happen.  Hence $\alpha(x_i) = \beta(x_i) = 0$ for $1 \leq i \leq p$.  Thus
\begin{eqnarray}
\label{ccc}
\sum_{i \neq \xi} \alpha(\varphi_i) + a & = & \sum_{i \neq \xi} \beta(\varphi_i) + a', \, \, \, 1 \leq \xi \leq p.
\end{eqnarray}
It follows from (\ref{aaa}) and (\ref{ccc}) that $\alpha(\varphi_\xi) = \beta(\varphi_\xi)$ for $1 \leq \xi \leq p$.  Hence
\[
\alpha(\varphi_i) = \beta(\varphi_i) = 0, \, \, \, 1 \leq i \leq p.
\]
In other words, one has $a = a'$ and
\begin{eqnarray*}
u & = & \prod_{j=1}^q y_j^{\alpha(y_j)} \prod_{k=1}^r z_k^{\alpha(z_k)} (\psi_{j_1}^{k_1}\psi_{j_2}^{k_2}\cdots\psi_{j_a}^{k_a}), \\
v & = & \prod_{j=1}^q y_j^{\beta(y_j)} \prod_{k=1}^r z_k^{\beta(z_k)} (\psi_{j'_1}^{k'_1}\psi_{j'_2}^{k'_2}\cdots\psi_{j'_{a}}^{k_{a}}).
\end{eqnarray*}
Let, say, $\alpha(y_\xi) > 0$ with $1 \leq \xi < q$.  Then $\beta(y_\xi) = 0$ and $\xi \not\in \{j_1, \ldots, j_a\}$.  Hence $y_\xi^{a+1}$ divides $\pi(u)$.  However, $y_\xi^{a+1}$ cannot divide $\pi(v)$, which is impossible.  Thus
\[
\alpha(y_1) = \cdots = \alpha(y_{q-1}) = \beta(y_1) = \cdots = \beta(y_{q-1}) = 0.
\]
Similarly, one has
\[
\alpha(z_2) = \cdots = \alpha(z_{r}) = \beta(z_2) = \cdots = \beta(z_{r}) = 0.
\]
As a result,
\begin{eqnarray*}
u & = & y_q^{\alpha(y_q)} z_1^{\alpha(z_1)} (\psi_{j_1}^{k_1}\psi_{j_2}^{k_2}\cdots\psi_{j_a}^{k_a}), \\
v & = & y_q^{\beta(y_q)} z_1^{\beta(z_1)} (\psi_{j'_1}^{k'_1}\psi_{j'_2}^{k'_2}\cdots\psi_{j'_{a}}^{k_{a}}).
\end{eqnarray*}
Let $\alpha(y_q) > 0$.  Then $\beta(y_q) = 0$.  Since $\deg \pi(u) = \deg \pi(v)$, one has $\alpha(z_1) = 0$ and $\beta(z_1) = \alpha(y_q)$.  Hence
\begin{eqnarray*}
u & = & y_q^{\alpha} (\psi_{j_1}^{k_1}\psi_{j_2}^{k_2}\cdots\psi_{j_a}^{k_a}), \\
v & = & z_1^{\alpha} (\psi_{j'_1}^{k'_1}\psi_{j'_2}^{k'_2}\cdots\psi_{j'_{a}}^{k_{a}}).
\end{eqnarray*}
The comparison between the $\yb$-degree of $\pi(u)$ and that of $\pi(v)$ yields $\alpha = 0$.  Finally,
\begin{eqnarray*}
u & = & \psi_{j_1}^{k_1}\psi_{j_2}^{k_2}\cdots\psi_{j_a}^{k_a}, \\
v & = & \psi_{j'_1}^{k'_1}\psi_{j'_2}^{k'_2}\cdots\psi_{j'_{a}}^{k_{a}}.
\end{eqnarray*}
Let say, $j_1 < j'_1$.  Then $y_{j_1}^a$ cannot divide $\pi(u)$.  Since $j_1 < j'_1 \leq \cdots \leq j'_a$, it follows that $y_{j_1}^a$ divides $\pi(v)$.  This is impossible.  Thus $j_1 = j'_1$.  Hence $k_1 \neq k'_1$.  Let, say, $k_1 < k'_1$.  Then $z_{k_1}^a$ cannot divide $\pi(u)$ and can divide $\pi(v)$.  Again this is impossible.  Hence $a = 0$ and $u = v = 1$, as desired.
\end{proof}

{\em Path graphs}. Let $P_n$ denote the path graph on $[n]$.  Thus the set of edges of $P_n$ is
\[
E(P_n) = \{ \{1,2\}, \{2,3\}, \ldots, \{n-1,n\}\}.
\]
Let $S = K[x_1, \ldots, x_n]$ denote the polynomial ring in $n$ variables over a field $K$ and $<_{\rm lex}$ the pure lexicographic order on $S$ induced by the ordering $x_1 > x_2 > \cdots > x_n$.  Let  $I_{P_n} \subset S$ denote the vertex cover ideal of $P_n$ with the minimal generating set of monomials $\{u_1, \ldots, u_s\}$, where
\[
u_s <_{\rm lex} \cdots <_{\rm lex} u_2 <_{\rm lex} u_1,
\]
and $\Rc(I_{P_n})$ the Rees algebra of $I_{P_n}$.  Thus
\[
\Rc(I_{P_n}) = K[x_1, \ldots, x_n, u_1 t, \ldots, u_s t] \subset S[t].
\]
Let $T = K[x_1, \ldots, x_n, y_1, \ldots, y_s]$ denote the polynomial ring in $n + s$ variables over $K$ and define the surjective ring homomorphism $\pi : T \to \Rc(I_{P_n})$ by setting
\[
\pi(x_i) = x_i, \, \pi(y_j) = u_j t, \, \, \, \, \, 1 \leq i \leq n, \, 1 \leq j \leq s.
\]
The defining ideal $J_{P_n} \subset T$ of $\Rc(I_{P_n})$ is the kernel of $\pi$.  We consider the pure lexicographic order $<_{\rm lex}$ on $T$ induced by the ordering
\[
y_1 > y_2 > \cdots > y_s > x_1 > x_2 > \cdots > x_n.
\]
Our mission is to compute the initial ideal ${\rm in}_{<_{\lex}}(J_{P_n})$ of $J_{P_n}$ with respect to $<_{\lex}$.
\begin{itemize}
\item[(i)]
Let $1 \leq i \leq n - 3$.  Suppose that $x_{i-1}x_{i}$ divides $x_0 u_j$ and that $x_{i+3}$ does not divide $u_j$.  Let $u_k = x_{i+1}u_j/x_i$.  Then $j < k$ and
\begin{eqnarray*}
\label{type_a}
x_{i+1} y_j - x_{i} y_k \in J_{P_n}.
\end{eqnarray*}
Furthermore, if $x_{n-2}x_{n-1}$ divides $u_j$ and if $u_k = x_n u_j/x_{n-1}$, then $j < k$ and $$x_{n} y_j - x_{n-1} y_k \in J_{P_n}.$$
\item[(ii)]
Let $1 \leq i \leq n - 2$.  Suppose that each of $x_{i-1}x_{i}$ and $x_{i+2}x_{i+3}$ divides $x_0 u_j x_{n+1}$.  Let $u_k = x_{i+1}u_j/(x_ix_{i+2})$.  Then $j < k$ and
\begin{eqnarray*}
\label{type_b}
x_{i+1} y_j - x_{i}x_{i+2} y_k \in J_{P_n}.
\end{eqnarray*}
\item[(iii)]
Let $j < k$.  Suppose that $x_{i-1}x_{i}$ divides $u_j$ and that neither $x_{i-2}$ nor $x_{i}$ divides $u_k$.  Let $u_j = v_j x_{i-1}x_{i} w_j$ and $u_k = v_k x_{i-1} w_k$ with $\supp(v_j) \subset [i-3]$, $\supp(w_j) \subset [n] \setminus [i+1]$, $\supp(v_k) \subset [i-3]$ and $\supp(w_k) \subset [n] \setminus [i]$.  Let $u_a = v_j x_{i-1} w_k$ and $u_b = v_k x_{i-1}x_{i} w_j$.  Furthermore, suppose that $v_j \neq v_k$.  Then $j < a$ and $j < b$ with
\begin{eqnarray*}
\label{type_c}
y_j y_k - y_a y_b \in J_{P_n}.
\end{eqnarray*}
\item[(iv)]
Let $j < k$. Suppose that $x_{i}x_{i+1}$ divides $u_j$ and that $x_{i-1}x_{i}$ divides $u_k$.  Let $u_j = v_j x_{i}x_{i+1} w_j$ and $u_k = v_k x_{i-1}x_{i} w_k$ with $\supp(v_j) \subset [i-2]$, $\supp(w_j) \subset [n] \setminus [i+2]$, $\supp(v_k) \subset [i-3]$ and $\supp(w_k) \subset [n] \setminus [i+1]$.  Let $u_a = v_j x_{i} w_k$ and $u_b = v_k x_{i-1}x_{i+1} w_j$.  Then $j < a$ and $j < b$ with
\begin{eqnarray*}
\label{type_d}
y_j y_k - x_{i} y_a y_b \in J_{P_n}.
\end{eqnarray*}
\item[(v)]
Let $j < k$. Suppose that $x_{i-1}x_{i}$ divides $u_j$ and that $x_{i}x_{i+1}$ divides $u_k$.  Let $u_j = v_j x_{i-1}x_{i} w_j$ and $u_k = v_k x_{i}x_{i+1} w_k$ with $\supp(v_j) \subset [i-3]$, $\supp(w_j) \subset [n] \setminus [i+1]$, $\supp(v_k) \subset [i-2]$ and $\supp(w_k) \subset [n] \setminus [i+2]$.  Let $u_a = v_j x_{i-1} x_{i+1} w_k$ and $u_b = v_k x_i w_j$.  Then $j < a$ and $j < b$ with
\begin{eqnarray*}
\label{type_e}
y_j y_k - x_{i} y_a y_b \in J_{P_n}.
\end{eqnarray*}
\end{itemize}

\begin{Example}
\label{n=8}
{\em
Let $n = 8$.  The vertex cover ideal of $P_8$ is generated by
\[
u_1 = x_1x_3x_4x_6x_7, \, \,
u_2 = x_1x_3x_4x_6x_8, \, \,
u_3 = x_1x_3x_5x_6x_8, \, \,
u_4 = x_1x_3x_5x_7,
\]
\[
u_5 = x_2x_3x_5x_6x_8, \, \,
u_6 = x_2x_3x_5x_7, \, \,
u_7 = x_2x_4x_5x_7, \, \,
u_8 = x_2x_4x_6x_7, \, \,
u_9 = x_2x_4x_6x_8.
\]
The binomials of types (i), (ii), (iii), (iv) and (v) are as follows:
\begin{itemize}
\item
type (i) binomials
\[
x_8y_1 - x_7y_2, \, \,
x_5y_2 - x_4y_3, \, \,
x_2y_3 - x_1y_5, \, \,
x_2y_4 - x_1y_6,
\]
\[
x_4y_6 - x_3y_7, \, \,
x_6y_7 - x_5y_8, \, \,
x_8y_8 - x_7y_9.
\]
\item
type (ii) binomials
\[
x_7y_3 - x_6x_8y_4, \, \,
x_4y_5 - x_3x_5y_9, \, \,
x_7y_5 - x_6x_8y_6.
\]
\item
type (iii) binomials
\[
y_1y_9 - y_2y_8, \, \, y_3y_6 - y_4y_5.
\]
\item
type (iv) binomials
\[
y_1y_3 - x_6y_2y_4, \, \,
y_1y_5 - x_6y_2y_6, \, \,
y_1y_5 - x_3y_3y_8, \, \,
y_1y_6 - x_3y_4y_8,
\]
\[
y_2y_5 - x_3y_3y_9, \, \,
y_2y_6 - x_3y_4y_9, \, \,
y_3y_7 - x_5y_4y_9, \, \,
y_5y_7 - x_5y_6y_9.
\]
\item
type (v) binomials
\[
y_1y_7 - x_4y_4y_8, \, \,
y_2y_7 - x_4y_4y_9, \, \,
y_3y_8 - x_6y_4y_9, \, \,
y_5y_8 - x_6y_6y_9.
\]
\end{itemize}
It then follows that the set $\Gc_{<_{\lex}}(J_{P_8})$ of the above binomials is a Gr\"obner basis of $J_{P_8}$ with respect to $<_{\lex}$.  Furthermore,
\[
\left(\Gc_{<_{\lex}}(J_{P_8}) \setminus \{y_1y_5 - x_6y_2y_6, y_1y_5 - x_3y_3y_8\}\right) \bigcup \{y_1y_5 - x_3x_6y_4y_9\}
\]
is the reduced Gr\"obner basis of $J_{P_8}$ with respect to $<_{\lex}$.
}
\end{Example}

We now approach the initial ideal of $J_{P_n}$ with respect to $<_{\lex}$.  Let $\Ac_n$ denote the set of those monomials $x_{i+1}y_j$ for which $x_{i-1}x_i$ divides $x_0 u_j$, where $1 \leq i < n$.  Let $\Bc_n$ denote the set of those monomials $y_jy_k$ with $j < k$ satisfying one of the following conditions:
\begin{itemize}
\item
there is $3 \leq i \leq n - 1$ for which $x_{i-1}x_{i}$ divides $u_j$, neither $x_{i-2}$ nor $x_{i}$ divides $u_k$, and ${\rm supp}(u_j) \cap [i-3] \neq {\rm supp}(u_k) \cap [i-3]$;
\item
there is $3 \leq i \leq n - 2$ for which $x_{i}x_{i+1}$ divides $u_j$, and $x_{i-1}x_{i}$ divides $u_k$;
\item
there is $3 \leq i \leq n - 2$ for which $x_{i-1}x_{i}$ divides $u_j$, and $x_{i}x_{i+1}$ divides $u_k$.
\end{itemize}

\begin{Theorem}
\label{path_initial}
The minimal set of monomial generators of the initial ideal ${\rm in}_{<_{\lex}}(J_{P_n})$ of $J_{P_n}$ with respect to $<_{\lex}$ is equal to $\Ac_n \cup \Bc_n$.  Thus in particular ${\rm in}_{<_{\lex}}(J_{P_n})$ is generated by quadratic monomials.
\end{Theorem}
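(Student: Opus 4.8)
The plan is to exhibit a Gr\"obner basis of $J_{P_n}$ with respect to $<_{\lex}$ whose leading terms are exactly the monomials in $\Ac_n\cup\Bc_n$. Write $\Gc$ for the set of all binomials of types (i)--(v) listed above, and put $L=(\Ac_n\cup\Bc_n)\subseteq T$. Once we know that $\Gc$ is a Gr\"obner basis of $J_{P_n}$ and that $\ini_{<_{\lex}}(J_{P_n})=L$, the theorem follows: the monomials in $\Ac_n\cup\Bc_n$ are squarefree of degree $2$ and none of them divides another, so $\Ac_n\cup\Bc_n$ is automatically the unique minimal monomial generating set of $L$, and $\ini_{<_{\lex}}(J_{P_n})=L$ is then generated by quadrics. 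Following the method used in the proof of Theorem~\ref{GB}, by \cite[Lemma 1.1]{AHH} it is enough to establish: (a) $\Gc\subseteq J_{P_n}$ and the leading term of each binomial in $\Gc$ is its displayed left-hand monomial, so that $L\subseteq \ini_{<_{\lex}}(J_{P_n})$; and (b) for any two distinct monomials $u,v\in T$, neither divisible by any element of $\Ac_n\cup\Bc_n$, one has $\pi(u)\neq\pi(v)$.

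For (a) I would first record the combinatorial dictionary for $I_{P_n}$: a squarefree monomial with support $C\subseteq[n]$ is a minimal generator of $I_{P_n}$ if and only if $C$ is a minimal vertex cover of $P_n$, i.e.\ $C$ meets every edge $\{i,i+1\}$ and every $i\in C$ has a neighbour outside $C$ (equivalently, $[n]\setminus C$ is a maximal independent set). With this, each of the five families is verified by a short local check inside the window $\{i-1,\dots,i+3\}$: one shows the prescribed monomials $u_k$ (resp.\ $u_a,u_b$) are again minimal vertex cover monomials and that the obvious monomial identity holds ($x_{i+1}u_j=x_iu_k$ for (i), $x_{i+1}u_j=x_ix_{i+2}u_k$ for (ii), $u_ju_k=u_au_b$ for (iii), $u_ju_k=x_iu_au_b$ for (iv) and (v)), so the binomial lies in $\Ker(\pi)=J_{P_n}$. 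The leading terms are read off from the order: $<_{\lex}$ orders the $y$'s before the $x$'s with smaller indices dominating, and in every binomial of $\Gc$ the left-hand monomial is the only one containing $y_j$, where $j$ is strictly less than every other $y$-index occurring; hence the left-hand monomial is the leading term. So the leading terms of types (i),(ii) are precisely the monomials of $\Ac_n$ and those of types (iii),(iv),(v) the monomials of $\Bc_n$. Conversely, every element of $\Ac_n\cup\Bc_n$ arises this way: given $x_{i+1}y_j\in\Ac_n$, the hypothesis $x_{i-1}x_i\mid x_0u_j$ forces $i+1\notin C_j$ by minimality of $C_j$, and then $i+2\in C_j$ because $[n]\setminus C_j$ is independent, so exactly one of (i), (ii) applies according to whether $i+3\in C_j$; the three conditions defining $\Bc_n$ are precisely the hypotheses under which (iii), (iv), (v) are defined. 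Thus $L$ equals the ideal generated by the leading terms of $\Gc$.

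Step (b) is the heart of the argument. I would begin by describing the standard monomials explicitly, i.e.\ those $u=\xb^{\ab}\yb^{\bb}$ divisible by no element of $\Ac_n\cup\Bc_n$: the $\Bc_n$-relations force the support $\{j:b_j>0\}$ of the $y$-part to be ``chain-like'' (no two of the chosen $u_j$ may realize any of the local patterns in (iii), (iv), (v)), and the $\Ac_n$-relations then constrain which $x$-powers may appear in terms of this $y$-support. Suppose now that $u\neq v$ are standard with $\pi(u)=\pi(v)$; comparing $t$-degrees gives $\sum_jb_j=\sum_jb'_j$, and the identity $\pi(u)=\pi(v)$ becomes $\xb^{\ab}\prod_ju_j^{b_j}=\xb^{\ab'}\prod_ju_j^{b'_j}$ in $S$. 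I would then recover $(\ab,\bb)$ from this monomial, arguing by induction on $n$: one analyses the behaviour at vertex $n$ (and symmetrically at vertex $1$), using the shape of the type (i)--(ii) relations to see which generators $u_j$ present in a standard monomial can carry $x_n$, peels these off, and reduces to a path on fewer vertices, the chain-likeness of the $y$-support guaranteeing that the process is forced at each step and hence that $\ab=\ab'$ and $\bb=\bb'$. This contradiction yields (b), and together with (a) gives $\ini_{<_{\lex}}(J_{P_n})=L=(\Ac_n\cup\Bc_n)$.

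The main obstacle is step (b): finding a workable closed description of the standard monomials and proving uniqueness of the representation $\pi(u)$. The combinatorics of minimal vertex covers of a path is elementary, but here it interacts simultaneously with all five families of binomials, and the induction recovering $(\ab,\bb)$ needs delicate case analysis near the two endpoints of the path --- this is the source of the complexity the authors mention in the introduction. By contrast the verifications in (a), though numerous, are routine, and the leading-term identification together with the minimality of $\Ac_n\cup\Bc_n$ is essentially formal once the vertex-cover dictionary is in place.
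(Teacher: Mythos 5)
Your overall framework is correct and matches the paper's: invoke \cite[Lemma 1.1]{AHH}, verify that the binomials of types (i)--(v) lie in $J_{P_n}$ with the displayed left-hand monomials as leading terms (giving $\Ac_n\cup\Bc_n\subseteq\ini_{<_\lex}(J_{P_n})$), and then prove that $\pi$ is injective on the set $\Omega$ of standard monomials. Step (a) is routine and your vertex-cover dictionary argument is fine. The entire content of the theorem, however, is step (b), and there your sketch diverges from the paper and has a genuine gap that you yourself acknowledge.

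The paper does not seek a closed description of the standard monomials, nor does it induct on $n$. Instead, for $w = x_{i_1}\cdots x_{i_p}y_{j_1}\cdots y_{j_q}\in\Omega$ (with $j_1\le\cdots\le j_q$) it forms the $0$-$1$ incidence matrix $A_w=(a_{\xi\zeta})\in\ZZ^{q\times n}$ with $a_{\xi\zeta}=1$ iff $x_\zeta\mid u_{j_\xi}$, and compares the columns $\ab_1,\ldots,\ab_n$ of $A_w$ with those of $A_{w'}$ by induction on the column index $c$. The base columns $\ab_1,\ab_2$ are pinned down by counting $x_1$- and $x_2$-degrees of $\pi(w)=\pi(w')$ together with the observation that a mismatch would force $x_2$ to divide the $x$-part of one of $w,w'$ and hence produce a forbidden monomial $x_2y_{j_1}\in\Ac_n$. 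The inductive step (for $3\le c<n$) locates the first row $r$ where column $c$ disagrees, uses that $u_{j_r}$ and $u_{j'_r}$ are minimal vertex covers (so both must meet $\{c-1,c\}$) together with the sortedness of $j_1\le\cdots\le j_q$, and then counts $1$'s in column $c+1$ to conclude that $x_{c+1}$ must divide the $x$-part of $w$, producing a forbidden $x_{c+1}y_{j_r}\in\Ac_n$. Once $A_w=A_{w'}$, the $y$-parts agree and then the $x$-parts agree.

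Your proposed alternative (analyze vertex $n$, ``peel off'' the factors carrying $x_n$, reduce to $P_{n-1}$) is only a heuristic. Two concrete problems: first, identifying \emph{which} of the chosen $u_{j_a}$ carry $x_n$ from the product $\pi(w)$ is exactly the uniqueness statement you are trying to prove, so the peeling step is circular unless you first establish an independent invariant (the paper's column $\ab_n$ plays that role, but is recovered last, not first). Second, a standard monomial for $J_{P_n}$ does not restrict to a standard monomial for $J_{P_{n-1}}$: the $u_j$ are not generators of $I_{P_{n-1}}$, and $\Ac_{n-1},\Bc_{n-1}$ are defined with respect to a different minimal generating set, so the inductive hypothesis does not directly apply. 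You flag step (b) as the obstacle, and rightly so; the matrix column-comparison is precisely the device the paper uses to make the comparison ``forced at each step'', and you would need either to reinvent it or to supply a precisely formulated substitute.
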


\begin{proof}
Let $\Omega$ denote the set of those monomials $w \in T$ for which none of the monomials belonging to $\Ac_n \cup \Bc_n$ divides $w$.  By virtue of \cite[Lemma 1.1]{AHH} our task is to show that, for $w \in \Omega$ and $w' \in \Omega$ with $w \neq w'$, one has $\pi(w) \neq \pi(w')$.  Let
\[
w = x_{i_1} \cdots x_{i_p} y_{j_1} \cdots y_{j_q}, \, \, \, \, \, i_1 \leq \cdots \leq i_p, \, \, \, j_1 \leq \cdots \leq j_q,
\]
\[
w' = x_{i'_1} \cdots x_{i'_{p'}} y_{j'_1} \cdots y_{j'_q}, \, \, \, \, \, i'_1 \leq \cdots \leq i'_{p'}, \, \, \, j'_1 \leq \cdots \leq j'_q
\]
be monomials belonging to $\Omega$ with $\pi(w) = \pi(w')$.

Let $A$ be the set of those $j_a$ for which $x_1$ divides $u_{j_a}$ and $B$ the set of those $j_b$ for which $x_2$ divides $u_{j_a}$.  Let $A'$ be the set of those $j'_{a'}$ for which $x_1$ divides $u_{j'_{a'}}$ and $B'$ the set of those $j'_{b'}$ for which $x_2$ divides $u_{j'_{b'}}$.  Suppose $|A| > |A'|$.  Then $|B| < |B'|$.  Thus $x_{i_1} = x_2$.  Hence $x_{i_1}y_{j_1} \in \Ac_n$, which contradicts to $w \in \Omega$.  Thus $|A| = |A'|$ and $|B| = |B'|$.

Now, we introduce the matrix $A_w = (a_{\xi\zeta}) \in \ZZ^{q \times n}$, where $a_{\xi\zeta} = 1$ if $x_\zeta$ divides $u_{j_\xi}$ and $a_{\xi\zeta} = 0$ otherwise.  The matrix $A_{w'} = (a'_{\xi\zeta})$ is introduced similarly.  Let $\ab_1, \ldots, \ab_n$ be the column vectors of $A_w$ and $\ab'_1, \ldots, \ab'_n$ those of $A_{w'}$.  One has $\ab_1 = \ab'_1$ and $\ab_2 = \ab'_2$.

Let $3 \leq c < n$.  Suppose that $\ab_\zeta = \ab'_\zeta$ for $1 \leq \zeta \leq c - 1$.  We then claim $\ab_c = \ab'_c$.  Let $a_{1, c} = a'_{1, c}, a_{2, c} = a'_{2, c}, \ldots, a_{r-1, c} = a'_{r-1, c}$ and $a_{r, c} \neq a'_{r, c}$.  Let, say, $a_{r, c} = 1$ and $a'_{r, c} = 0$.  Then $a_{r, c - 1} = a'_{r, c - 1} = 1$.  If $r_0 > r$ and
\[
(a_{r, 1}, \ldots, a_{r, c - 1}) = (a_{r_0, 1}, \ldots, a_{r_0, c - 1}),
\]
then
\[
(a'_{r, 1}, \ldots, a'_{r, c - 1}) = (a'_{r_0, 1}, \ldots, a'_{r_0, c - 1}).
\]
Hence $a'_{r_0, c} = 0$, since $a'_{r, c} = 0$.  Thus $a'_{r_0, c+1} = 1$.  Furthermore, since $a_{r, c-1} = a_{r, c} = 1$, for $r_1 > r$ with
\[
(a_{r, 1}, \ldots, a_{r, c - 1}) \neq (a_{r_1, 1}, \ldots, a_{r_1, c - 1}),
\]
one has $a_{r_1, c} = 1$ and $a_{r_1, c+1} = 0$.  Since $a_{r, c+1} = 0$ and $a'_{r, c+1} = 1$, it follows that
\[
|\{a_{r, c+1}, a_{r+1, c+1}, \cdots, a_{q, c+1}\} \cap \{ 1 \}|
< |\{a'_{r, c+1}, a'_{r+1, c+1}, \cdots, a'_{q, c+1}\} \cap \{ 1 \}|.
\]
Since $a_{r, c-1} = a_{r, c} = 1$, it follows that, for $r_2 > r$ with $a_{r_2, c} = 1$, one has $a_{r_2, c+1} = 0$.  Thus
\[
|\{a_{1, c+1}, a_{2, c+1}, \cdots, a_{r-1, c+1}\} \cap \{ 1 \}|
\leq |\{a'_{1, c+1}, a'_{2, c+1}, \cdots, a'_{r-1, c+1}\} \cap \{ 1 \}|.
\]
Hence
\[
|\{a_{1, c+1}, a_{2, c+1}, \cdots, a_{q, c+1}\} \cap \{ 1 \}|
< |\{a'_{1, c+1}, a'_{2, c+1}, \cdots, a'_{q, c+1}\} \cap \{ 1 \}|.
\]
Thus $c+1 \in \{i_1, \ldots, i_p\}$.  Since $a_{r, c-1} = a_{r,c} = 1$, one has $x_{c+1}y_{j_r} \in \Ac_n$, which contradicts $w \in \Omega$.  Hence $\ab_c = \ab'_c$, as desired.

The discussion done above yields $\ab_\zeta = \ab'_\zeta$ for $1 \leq \zeta < n$.  Clearly, if $\ab_{n-1} = \ab'_{n-1}$, then $\ab_{n} = \ab'_n$.  Thus $A_w = A_{w'}$.  In other words, $y_{j_1} = y_{j'_1}, \ldots, y_{j_q} = y_{j'_q}$.  Since $\pi(w) = \pi(w')$, it follows that $p = p'$ and $x_{i_1} = x_{i'_1}, \ldots, x_{i_p} = x_{i'_p}$.  Hence $w = w'$, as required.
\end{proof}

\begin{Corollary}
\label{path_GB}
The set of binomials of types (i), (ii), (iii), (iv) and (v) is a Gr\"obner basis of $J_{P_n}$ with respect to $<_{\lex}$.
\end{Corollary}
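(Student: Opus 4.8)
The proof will deduce the statement from Theorem~\ref{path_initial} together with the standard fact that a set $\Gc\subseteq J_{P_n}$ is a Gr\"obner basis of $J_{P_n}$ with respect to $<_\lex$ as soon as the leading terms $\ini_{<_\lex}(g)$, $g\in\Gc$, generate $\ini_{<_\lex}(J_{P_n})$. Let $\Gc$ be the collection of all binomials of types (i)--(v); each of them lies in $J_{P_n}$ by the discussion preceding the corollary. Since all the monomials in play have degree two, the criterion reduces to establishing the set equality $\{\ini_{<_\lex}(g):g\in\Gc\}=\Ac_n\cup\Bc_n$; then Theorem~\ref{path_initial} yields $\langle\ini_{<_\lex}(g):g\in\Gc\rangle=\ini_{<_\lex}(J_{P_n})$ and the corollary follows.

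First I would read off the leading term of each generator. In $<_\lex$ every $y$-variable precedes every $x$-variable and $y_1>y_2>\cdots$, so for a binomial of type (i) or (ii), say $x_{i+1}y_j-x_iy_k$ or $x_{i+1}y_j-x_ix_{i+2}y_k$ with $j<k$, the inequality $y_j>y_k$ is decisive and the leading term is $x_{i+1}y_j$, which by the hypotheses of (i)/(ii) belongs to $\Ac_n$; and for a binomial of type (iii), (iv) or (v), say $y_jy_k-(\text{monomial})\,y_ay_b$ with $j<k$, $j<a$ and $j<b$, the variable $y_j$ exceeds every variable occurring in the second term, so the leading term is $y_jy_k$, which by the support conditions of (iii), (iv), (v) satisfies the first, second, resp.\ third condition defining $\Bc_n$. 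This gives the inclusion $\subseteq$. For the reverse inclusion I would run through $\Ac_n\cup\Bc_n$: given $x_{i+1}y_j\in\Ac_n$, so $x_{i-1}x_i\mid x_0u_j$ with $1\le i<n$, one checks that exactly one of the recipes of (i), (ii) produces a binomial with leading term $x_{i+1}y_j$ (the separate clause for $i=n-1$ and the dummy symbols $x_0$, $x_{n+1}$ handling the two ends); and given $y_jy_k\in\Bc_n$, one matches the three conditions defining $\Bc_n$ to the types (iii), (iv), (v) and builds the corresponding partner monomials $u_a$, $u_b$.

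The main work, and the expected obstacle, is hidden in the last sentence: one must verify that the monomials $u_k$, $u_a$, $u_b$ (together with the auxiliary factors appearing in (iv), (v)) manufactured from $u_j$, $u_k$ by those recipes are again minimal generators of $I_{P_n}$ --- that is, correspond to minimal vertex covers of $P_n$ --- and that the index inequalities $j<k$, $j<a$, $j<b$ hold, so that the claimed monomial really is the leading term. This is a purely combinatorial analysis of minimal vertex covers of a path, namely subsets of $[n]$ meeting every edge in which no vertex has both of its path-neighbours inside the set; much of the casework is already packaged into the statements of (i)--(v), and what remains is to confirm that together these recipes realize every monomial of $\Ac_n\cup\Bc_n$, with the usual care near the boundary vertices $1$ and $n$. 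Finally, one should observe that $\Gc$ need not be the reduced Gr\"obner basis: distinct binomials of types (iv)--(v) may share a leading term $y_jy_k$, as already seen in Example~\ref{n=8}, but this does no harm for the present claim.
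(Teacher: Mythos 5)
Your proposal is correct and follows the same (implicit) route as the paper: the corollary is stated without a separate proof precisely because it is the reformulation of Theorem~\ref{path_initial} via the standard criterion that a subset $\Gc\subseteq J_{P_n}$ is a Gr\"obner basis once the leading terms $\ini_{<_\lex}(g)$, $g\in\Gc$, generate $\ini_{<_\lex}(J_{P_n})$. You correctly identify the two inclusions to check, read off the leading terms using the order $y_1>\cdots>y_s>x_1>\cdots>x_n$, observe that these leading terms land in $\Ac_n\cup\Bc_n$, and note that the converse (every monomial in $\Ac_n\cup\Bc_n$ arises from one of the recipes (i)--(v), with the produced $u_k,u_a,u_b$ again minimal vertex covers of the path) is the combinatorial content that the paper leaves to the reader; your remark that $\Gc$ need not be reduced, with the pointer to the duplicated leading term $y_1y_5$ in Example~\ref{n=8}, is also accurate.
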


{\em Cameron-Walker graphs whose bipartite part is complete.} A finite simple graph $G$ is called {\em Cameron--Walker} \cite{HHKO} if $G$ consists of a connected bipartite graph with vertex partition $[n] \cup [m]$, where $n \geq 1$ and $m \geq 1$, such that there is at least one leaf edge attached to each vertex $i \in [n]$ and that there may be possibly some pendant triangles attached to each vertex $j \in [m]$.  For example the graph depicted in Figure~\ref{Figure2},

\begin{figure}[h]
\begin{center}
\includegraphics[height=4cm]{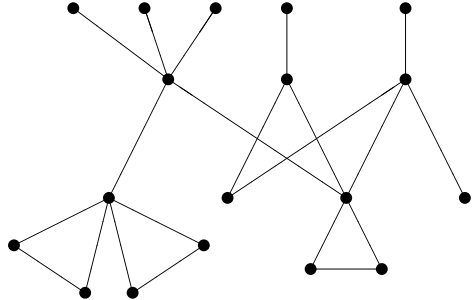}
\end{center}
\caption{A Cameron-Walker graph $G$}
\label{Figure2}
\end{figure}

\noindent
is a Cameron--Walker graph with $n = 3$ and $m = 4$.  Every Cameron--Walker graph is sequentially Cohen--Macaulay \cite[Theorem 3.1]{HHKO} and hence the vertex cover ideal of a Cameron--Walker graph is componentwise linear.  Furthermore, a Cameron--Walker graph $G$ is Cohen--Macaulay if and only if $G$ consists of a connected bipartite graph with vertex partition $[n] \cup [m]$ such that there is exactly one leaf edge attached to each vertex $i \in [n]$ and that there is exactly one pendant triangle attached to each vertex $j \in [m]$.

It is natural to ask whether the vertex cover ideal of a Cameron--Walker graph can satisfy the $x$-condition.  However, to study the question for arbitrary Cameron--Walker graph seems to be rather difficult.  Now, we focus attention on Cameron--Walker graphs whose bipartite part are complete bipartite graphs.

Let $G$ be a Cameron--Walker graph whose bipartite part is the complete bipartite graph $K_{n,m}$ with vertex partition
$$\{\xi_1, \ldots, \xi_n\} \cup \{\zeta_1, \ldots, \zeta_m\},$$
where $n \geq 1$ and $m \geq 1$, such that each vertex $\xi_i$ possesses leaves $$\{\xi_i, a_i^{(1)}\}, \ldots, \{\xi_i, a_i^{(p_i)}\}$$ with $p_i \geq 1$ and that each vertex $\zeta_j$ possesses pendant triangles $$\{\zeta_j, b_j^{(1)}, c_j^{(1)}\}, \ldots, \{\zeta_j, b_j^{(q_j)}, c_j^{(q_j)}\}$$ with $q_j \geq 0$.

Let $S$ denote the polynomial ring over a field $K$ whose variables coincide with the vertices of $G$ and $<_{\rm lex}$ the pure lexicographic order on $S$ induced by the ordering
\[
a_1^{(1)} > a_1^{(2)} > \cdots > a_1^{(p_1)} > a_2^{(1)} > \cdots > a_2^{(p_2)} > \cdots > a_n^{(1)} > \cdots > a_n^{(p_n)}
\]
\[
> b_1^{(1)} > c_1^{(1)} > \cdots > b_1^{(q_1)} > c_1^{(q_1)} > \cdots > b_m^{(1)} > c_m^{(1)} > \cdots > b_m^{(q_m)} > c_m^{(q_m)}
\]
\[
> \zeta_1 > \cdots > \zeta_m > \xi_1 > \cdots > \xi_n.
\]

Let $I_G \subset S$ denote the vertex cover ideal of $G$ whose minimal set of monomial generators is $\{u_1, \ldots, u_s\}$, where
\[
u_s <_{\rm lex} \cdots <_{\rm lex} u_2 <_{\rm lex} u_1.
\]
In particular,
\[
 u_1 = a_1^{(1)} \cdots a_n^{(p_n)} \zeta_1 \cdots \zeta_m
b_1^{(1)} \cdots b_m^{(q_m)},
\]
\[
u_s = \xi_1 \cdots \xi_n \zeta_1 \cdots \zeta_m c_1^{(1)} \cdots c_m^{(q_m)}.
\]

Let $\Rc(I_G)$ be the Rees algebra of $I_G$, which is the subring of $S[t]$ generated by
\[
a_1^{(1)}, \ldots, a_n^{(p_n)}, b_1^{(1)}, \ldots, b_m^{(q_m)}, c_1^{(1)}, \ldots, c_m^{(q_m)}, \xi_1, \ldots, \xi_n, \zeta_1, \ldots, \zeta_m, u_1 t, \ldots, u_s t.
\]

Let $T$ denote the polynomial ring over $K$ whose variables are the vertices of $G$ together with $y_1, \ldots, y_s$. One defines the surjective ring homomorphism $\pi: T \to \Rc(I_G)$ by setting $\pi(x) = x$ if $x$ is a vertex of $G$ and $\pi(y_j) = u_j t$ for $1 \leq j \leq s$.  The defining ideal $J_{G} \subset T$ of $\Rc(I_G)$ is the kernel of $\pi$.  Let $<_{\rm rev}$ denote the reverse lexicographic order on $K[y_1, \ldots, y_s]$ induced by the ordering
\[
y_1 > y_2 > \cdots > y_s.
\]

Now, one introduces the monomial order $<$ on $T$ defined as follow: If $v = v'v''$ and $w = w'w''$ are monomials belonging to $T$, where $v', w'\in S$ and where $v'', w'' \in K[y_1, \ldots, y_s]$, then $v < w$ if either (i) $v'' <_{\rm rev} w''$ or (ii) $v'' = w''$ and $v' <_{\rm lex} w'$.  For example, one has
\[
(a_2^{(1)})^5 y_1^3y_4 < (a_1^{(1)})^3 y_1^3y_4 < \xi_n\zeta_my_2^2y_3^2 < y_1^3y_4^2.
\]

Our job is to compute the initial ideal ${\rm in}_{<}(J_G)$ of $J_G$ with respect to $<$.
\begin{itemize}
\item[(i)]
If $\xi_i$ does not divide $u_{r}$ and if $u_{r'} = \xi_i u_{r}/(\prod_{k=1}^{p_i} a_i^{(k)}) w_0$, where $w_0 = \prod_{q_j = 0} \zeta_j$ if each $\xi_{i'}$ with $i' \neq i$ divides $u_{r}$, and otherwise $w_0 = 1$.  Then $r < r'$ and
\[
\xi_i y_{r} - \prod_{k=1}^{p_i} a_i^{(k)} w_0 y_{r'} \in J_G
\]
with $\xi_i y_{r}$ its initial monomial.
\item[(ii)]
If $q_j > 0$ and if $\zeta_j$ does not divide $u_r$ and if $u_{r'} = \zeta_j u_r/\prod_{k=1}^{q_j} b_j^{(k)}$, then $r < r'$ and
\[
\zeta_j y_r - \prod_{k=1}^{q_j} b_j^{(k)} y_{r'} \in J_G
\]
with $\zeta_j y_r$ its initial monomial.
\item[(iii)]
If $c_j^{(k)}$ does not divide $u_r$ and if $u_{r'} = c_j^{(k)} u_r/b_j^{(k)}$, then $r < r'$ and
\[
c_j^{(k)} y_r - b_j^{(k)} y_{r'} \in J_G
\]
with $c_j^{(k)} y_r$ its initial monomial.
\item[(iv)]
Fix $i$ and $r_0 \neq r_1$ with $r_0 > r_1$.  Suppose that $\xi_{i}$ does not divide $u_{r_0}$ and that $\xi_{i}$ divides $u_{r_1}$.  Let $u_{r'_0} =  \xi_{i}u_{r_0}/(\prod_{k=1}^{p_{i}} a_{i}^{(k)})w_0$, where $w_0 = \prod_{q_j = 0} \zeta_j$ if each $\xi_{i'}$ with $i' \neq i$ divides $u_{r_0}$, and otherwise $w_0 = 1$.  Let $u_{r'_1} = (\prod_{k=1}^{p_{i}} a_{i}^{(k)})u_{r_1}/\xi_{i}$.  Then $r'_0 > r_0 > r_1 > r'_1$ and
\[
y_{r_0}y_{r_1} - w_0 y_{r'_0}y_{r'_1} \in J_G
\]
with $y_{r_0}y_{r_1}$ its initial monomial.
\item[(v)]
Fix $j$ and $r_0 \neq r_1$ with $r_0 > r_1$.  Suppose that $\zeta_j$ divides both $u_{r_0}$ and $u_{r_1}$.  Fix $k$ and suppose that $b_j^{(k)}$ divides $u_{r_0}$ and that $c_j^{(k)}$ divides $u_{r_1}$.
Let $u_{r'_0} = c_j^{(k)}u_{r_0}/b_j^{(k)}$ and $u_{r'_1} = b_j^{(k)}u_{r_1}/c_j^{(k)}$.  Then $r'_0 > r_0 > r_1 > r'_1$ and
\[
y_{r_0}y_{r_1} - y_{r'_0}y_{r'_1} \in J_G
\]
with $y_{r_0}y_{r_1}$ its initial monomial.
\item[(vi)]
Let, in general, for each $1 \leq j \leq m$ and for each $1 \leq r \leq s$, $\Bc_r(j)$ denote the set of those $b_j^{(k)}$ which divide $u_r$ and $\Cc_r(j)$ that of those $c_j^{(k)}$ which divides $u_r$.  Let $w_r(j) = (\prod_{b_j^{(k)}\in \Bc_r(j)}b_j^{(k)})(\prod_{c_j^{(k)}\in \Cc_r(j)}c_j^{(k)})$.  Fix $j$ and $r_0 \neq r_1$ with $r_0 > r_1$ for which each $\xi_i$ divides both $u_{r_0}$ and $u_{r_1}$.  Suppose that $\zeta_{j}$ does not divide $u_{r_0}$ and that $\zeta_{j}$ divides $u_{r_1}$.  Let $u_{r'_0} = \zeta_j w_{r_1}(j) u_{r_0}/w_{r_0}(j)$ and $u_{r'_1} = w_{r_0}(j)u_{r_1}/\zeta_j w_{r_1}(j)$.  Then $r'_0 > r_0 > r_1 > r'_1$ and
\[
y_{r_0}y_{r_1} - y_{r'_0}y_{r'_1} \in J_G
\]
with $y_{r_0}y_{r_1}$ its initial monomial.
\end{itemize}

\begin{Example}
\label{example}
{\em
Let $G$ denote the Cameron--Walker graph with
\[
n = 4, \, m = 3, \, p_1 = 3, \, p_2 = 1, \, p_3 = 2, \, p_4 = 1, \, q_1 = 2, \, q_2 = 0, \, q_3 = 1
\]
whose bipartite part is the complete graph $K_{4,3}$.  If
\[
u_{r_1} = \xi_1 \xi_4 \zeta_1 \zeta_2 \zeta_3 a_2^{(1)} a_3^{(1)} a_3^{(2)} b_1^{(1)} c_1^{(2)} c_3^{(1)},
\]
then $\xi_2 y_{r_1}$ is an initial monomial of type (i) with $w_0 = 1$ and $c_1^{(1)} y_{r_1}$ is that of type (iii).  If
\[
u_{r_2} = \xi_1 \xi_2 \xi_3 \xi_4 \zeta_3 b_1^{(1)} c_1^{(1)} b_1^{(2)} c_1^{(2)} c_3^{(1)},
\]
then $\zeta_1 y_{r_2}$ is an initial monomial of type (ii).  If
\[
u_{r_3} = \xi_1 \xi_2 \xi_4 \zeta_1 \zeta_2 \zeta_3 a_3^{(1)} a_3^{(2)} b_1^{(1)} c_1^{(2)} b_3^{(1)},
\]
\[
u_{r_4} = \xi_1 \xi_3 \xi_4 \zeta_1 \zeta_2 \zeta_3 a_2^{(1)} c_1^{(1)} b_1^{(2)} c_3^{(1)},
\]
then $r_3 > r_4$ and $y_{r_3}y_{r_4}$ is an initial monomial of type (iv) with $w_0 = \zeta_2$ as well as that of type (v).  If
\[
u_{r_5} = \xi_1 \xi_2 \xi_3 \xi_4 \zeta_1 b_1^{(1)} b_1^{(2)} b_3^{(1)} c_3^{(1)},
\]
then $r_2 < r_5$ and $y_{r_2}y_{r_5}$ is an initial monomial of type (vi).
}
\end{Example}

We now come to the initial ideal ${\rm in}_{<}(J_G)$ of $J_G$ with respect to $<$.  Let $\Wc_G$ denote the set of initial monomials of binomials arising from (i), (ii), (iii), (iv), (v) and (vi) as described above.

\begin{Theorem}
\label{CWinitial}
The minimal set of monomial generators of the initial ideal ${\rm in}_{<}(J_G)$ of $J_G$ with respect to $<$ is equal to $\Wc_G$.  Thus in particular ${\rm in}_{<}(J_G)$ is generated by quadratic monomials.
\end{Theorem}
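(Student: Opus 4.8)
The plan is to mimic the proofs of Theorem~\ref{GB} and Theorem~\ref{path_initial}. Since each binomial listed in (i)--(vi) lies in $J_G$ with the indicated initial term, we already have $\Wc_G\subseteq \ini_<(J_G)$, and the elements of $\Wc_G$ are pairwise distinct quadratic monomials; hence it is enough to prove the reverse inclusion $\ini_<(J_G)\subseteq(\Wc_G)$, after which the remaining assertions (quadratic generation, and that $\Wc_G$ is the minimal monomial generating set) are immediate. As $J_G$ is the toric ideal of the monomial subalgebra $\Rc(I_G)\subseteq S[t]$, we invoke \cite[Lemma 1.1]{AHH}: writing $\Omega$ for the set of monomials of $T$ divisible by no element of $\Wc_G$, it suffices to show that $\pi(w)\neq\pi(w')$ for all $w,w'\in\Omega$ with $w\neq w'$. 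As usual we may assume that no variable of $T$ divides both $w$ and $w'$, and write $w=v\,y_{j_1}\cdots y_{j_q}$ and $w'=v'\,y_{j'_1}\cdots y_{j'_{q'}}$ with $v,v'\in S$; the goal is to deduce from $\pi(w)=\pi(w')$ that $w=w'=1$.

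Comparing $t$-degrees gives $q=q'$. We then argue that the two multisets of covering monomials $\{u_{j_a}\}$ and $\{u_{j'_a}\}$ coincide and that $v=v'$. Following the path case, we encode $w$ (resp.\ $w'$) by the $0$--$1$ data recording, for each factor $u_{j_a}$ and each vertex of $G$, whether that vertex belongs to the associated minimal vertex cover, and we scan the vertices in the order in which they are listed in the definition of the lexicographic order on $S$ --- first the leaf variables $a_i^{(k)}$, then the triangle variables $b_j^{(k)},c_j^{(k)}$, then the $\zeta_j$, then the $\xi_i$. The structural facts about a minimal vertex cover $C$ of $G$ that drive the argument are: for each $i$, exactly one of ``$\xi_i\in C$'' or ``$a_i^{(k)}\in C$ for every $k$'' holds; for each pendant triangle at $\zeta_j$, the cover contains $\zeta_j$ together with exactly one of $b_j^{(k)},c_j^{(k)}$, or else both $b_j^{(k)}$ and $c_j^{(k)}$; and, because the bipartite part is the complete graph $K_{n,m}$, if some $\xi_i\notin C$ then every $\zeta_j\in C$. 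If the data of $w$ and $w'$ first disagree at some vertex, then one of the binomials of type (i)--(vi) produces a monomial of $\Wc_G$ --- of the shape $x\,y_r$ with $x$ a vertex variable, or $y_{r_0}y_{r_1}$ --- that divides $w$, contradicting $w\in\Omega$. Hence the incidence data of $w$ and $w'$ agree, so $\{u_{j_a}\}=\{u_{j'_a}\}$ as multisets and $v=v'$; combined with the coprimality of $w$ and $w'$ this gives $w=w'=1$, contradicting $w\neq w'$. Thus $\pi|_\Omega$ is injective and the theorem follows.

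I expect the main obstacle to lie in the bookkeeping just sketched, and especially in the interplay between the $\zeta_j$ and the pendant triangles. Whether $\zeta_j$ belongs to a minimal cover is coupled to whether all the $\xi_i$ do --- this is the source of the auxiliary factor $w_0$ appearing in types (i), (iv) and (vi) --- and the two variables of a pendant triangle may be swapped without leaving the class of minimal covers (which is precisely what types (v) and (vi) encode); so the case analysis needed to match the triangle data and to eliminate the $\zeta_j$-contributions is the delicate part, whereas the leaf variables $a_i^{(k)}$ and the $\xi_i$ should behave essentially as in the path and bipartite settings.
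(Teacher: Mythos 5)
Your framework is the same as the paper's: invoke \cite[Lemma 1.1]{AHH}, let $\Omega$ be the set of monomials of $T$ not divisible by any member of $\Wc_G$, and show $\pi$ is injective on $\Omega$. You also correctly identify the structural facts about minimal vertex covers that drive the argument (leaves $a_i^{(k)}$ complementary to $\xi_i$; pendant triangles forcing $\zeta_j$ together with one or both of $b_j^{(k)},c_j^{(k)}$; completeness of the bipartite part forcing all $\zeta_j$ into the cover whenever some $\xi_i$ is missing), and you correctly locate types (iv)--(vi) as the source of the delicate part. That much matches the paper.

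However, you stop short of the key organizing idea that makes the bookkeeping go through, and this is a genuine gap rather than mere compression. Writing $w=g\cdot y_{\delta_1}\cdots y_{\delta_e}\in\Omega$ with $\delta_1\le\cdots\le\delta_e$, and $\Ac(r)$ for the set of $\xi_i$ dividing $u_r$, the type (iv) monomials force the \emph{nested chain} $\Ac(\delta_1)\subseteq\cdots\subseteq\Ac(\delta_e)$ within $w$ alone; types (v) and (vi) force analogous monotonicity for the $\zeta_j$- and the $b_j^{(k)}/c_j^{(k)}$-data. This monotonicity is what turns the vertex-by-vertex comparison into a finite induction: once the chain is in place one picks the cut-off $\rho_0$ where $\Ac(\delta_\rho)$ stabilizes at $\{\xi_1,\dots,\xi_n\}$, matches the $\Ac$-chain between $w$ and $w'$ using type (i), then matches the $\zeta_j$-data using types (ii) and (vi), and finally matches the $b_j^{(k)}/c_j^{(k)}$-data using types (iii) and (v). Without identifying the chain, the statement ``a first disagreement yields a contradiction'' has no well-founded scan to hang on, which is exactly why you flag the bookkeeping as the obstacle. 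Note also that the paper's scan runs $\xi\to\zeta\to b,c$ (the leaves never need to be scanned since they are complementary to the $\xi_i$ in any minimal cover), which is essentially the reverse of the lex-order scan you propose; the lex order serves only to pick the initial terms, not to organize the induction.

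One small additional remark: you reduce to $\gcd(w,w')=1$ and aim for $w=w'=1$; the paper does not make this reduction and instead shows directly that $\pi(w)=\pi(w')$ forces $u_{\delta_\rho}=u_{\delta'_\rho}$ for all $\rho$ and hence $w=w'$. Both are fine.
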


\begin{proof}
Let $\Omega$ denote the set of those monomials $w \in T$ for which none of the monomials belonging to $\Wc_G$ divides $w$. By virtue of \cite[Lemma 1.1]{AHH} our task is to show that, for $w \in \Omega$ and $w' \in \Omega$ with $w \neq w'$, one has $\pi(w) \neq \pi (w')$.  Let
\[
w = g \cdot y_{\delta_1} \cdots y_{\delta_e}, \, \, \, \, \, \, \, \, \, \, g \in S, \, \, \, \delta_1 \leq \cdots \leq \delta_e,
\]
\[
w' = g' \cdot y_{\delta'_1} \cdots y_{\delta'_e}, \, \, \, \, \, \, \, \, \, \, g' \in S, \, \, \, \delta'_1 \leq \cdots \leq \delta'_e
\]
be monomials belonging to $\Omega$ with $\pi(w) = \pi(w')$.

Let $\Ac(r)$ denote the set of those variables $\xi_i$ which divide $u_r$.  One claims
\[
\Ac(\delta_1) \subset \cdots \subset \Ac(\delta_e).
\]
In fact, if $\xi_i \in \Ac(\delta_\rho) \setminus \Ac(\delta_{\rho+1})$, then $w$ is divided by the initial monomial of a binomial of type (iv), since $\delta_\rho < \delta_{\rho+1}$, which contradicts $w \in \Omega$.  One has also
\[
\Ac(\delta'_1) \subset \cdots \subset \Ac(\delta'_e).
\]
It then follows that
\[
\Ac(\delta_1) = \Ac(\delta'_1), \ldots, \Ac(\delta_e) = \Ac(\delta'_e).
\]
To see why this is true, suppose that
\[
\Ac(\delta_1) = \Ac(\delta'_1), \ldots, \Ac(\delta_{\rho-1}) = \Ac(\delta'_{\rho-1}), \Ac(\delta_{\rho}) \neq \Ac(\delta'_{\rho}),
\]
and say, $\xi_i \in \Ac(\delta_\rho) \setminus \Ac(\delta'_{\rho})$.  Since $\pi(w) = \pi(w')$, the monomial $g'$ is divided by $\xi_i$.  Then $w$ is divided by the initial monomial of a binomial of type (i), since $\xi_i$ does not divide $u_{\delta'_{\rho}}$, which contradicts $w' \in \Omega$.  One can choose $\rho_0$ with
\[
\Ac(\delta_1) \subset \cdots \subset \Ac(\delta_{\rho_0}) \neq \Ac(\delta_{\rho_0 + 1}) = \cdots = \Ac(\delta_e) = \{\xi_1, \ldots, \xi_n\}.
\]
Then each $\zeta_j$ divides both $u_{\delta_\rho}$ and $u_{\delta'_\rho}$ for $\rho \leq \rho_0$.

Let $1 \leq j \leq m$ with $q_j > 0$.  Let $\rho_0 < \rho' < \rho'' \leq e$.  If $\zeta_j$ divides $u_{\rho'}$, then by using the initial monomials of binomials of type (vi), it follows that $\zeta_j$ divides $u_{\rho''}$.  Suppose that $\zeta_j$ divides each of $u_{\delta_{\rho'}}$ with $\rho_1 \leq \rho' \leq e$ and does not divide $u_{\delta_{\rho_1-1}}$.  If $\zeta_j$ does not divide $u_{\delta'_{\rho_1}}$, then $\zeta_j$ divides $g'$.  Thus $w'$ is divided by the initial monomial of a binomial of type (ii), since $\zeta_j$ does not divides $u_{\delta'_{\rho_1}}$, which contradicts to $w' \in \Omega$.  Hence $\zeta_j$ divides each of $u_{\delta'_{\rho'}}$ with $\rho_1 \leq \rho' \leq e$ and in addition, does not divide $u_{\delta'_{\rho_1-1}}$.

Let $1 \leq k \leq q_j$.  Let $\rho^*$ and $\rho^\#$ belong to $\{1, \ldots, \rho_0, \rho_1, \ldots, e\}$ with $\rho^* < \rho^\#$.  If $b_j^{(k)}$ divides $u_{\delta_{\rho^\#}}$, then by using the initial monomials of binomials of type (v), it follows that $b_j^{(k)}$ divides $u_{\delta_{\rho^*}}$.  Suppose that $b_j^{(k)}$ divides $u_{\delta_{\rho^*}}$ with $\rho^* \in \{1, \ldots, \rho_2\} \cap \{1, \ldots, \rho_0, \rho_1, \ldots, e\}$ and that $c_j^{(k)}$ divides  $u_{\delta_{\rho^\#}}$ with
$$\rho^\# \in \{\rho_2+1, \ldots, e\} \cap \{1, \ldots, \rho_0, \rho_1, \ldots, e\}.$$  Furthermore, suppose that $b_j^{(k)}$ divides $u_{\delta'_{\rho^*}}$ with $\rho^* \in \{1, \ldots, \rho'_2\} \cap \{1, \ldots, \rho_0, \rho_1, \ldots, e\}$ and that $c_j^{(k)}$ divides $u_{\delta'_{\rho^\#}}$ with $\rho^\# \in \{\rho'_2+1, \ldots, e\} \cap \{1, \ldots, \rho_0, \rho_1, \ldots, e\}$.  Provided that $\rho_2$ and $\rho'_2$ belong to $\{1, \ldots, \rho_0, \rho_1, \ldots, e\}$, one has $\rho_2 = \rho'_2$.  In fact, if, say, $\rho_2 < \rho'_2$, then $c_j^{(k)}$ divides $g'$.  Hence $w'$ is divided by the initial monomial of a binomial of type (iii), since $c_j^{(k)}$ does not divide $u_{\delta'_{\rho'_2}}$, which contradicts to $w' \in \Omega$.

As a result, one finally has
\[
u_{\delta_{1}} = u_{\delta'_{1}}, \ldots, u_{\delta_{e}} = u_{\delta'_{e}}.
\]
Hence $\delta_1 = \delta'_1, \ldots, \delta_e = \delta'_e$ and $w = w'$, as desired.
\end{proof}

By Theorems ~\ref{GB}, \ref{path_initial}, ~\ref{CWinitial}, and Theorem \ref{monomialcase} we get the following

\begin{Corollary}
Let $G$ be one of the following graphs.
\begin{itemize}
  \item A path graph,
  \item A biclique graph,
  \item A Cameron-Walker graph whose bipartite graph is a complete bipartite graph.
\end{itemize}

Then all powers of the vertex cover ideal $I_G$ have linear quotients and then are componentwise linear.

\end{Corollary}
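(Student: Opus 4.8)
The plan is to obtain this Corollary purely as a synthesis of the three Gr\"obner basis computations above together with Theorem~\ref{monomialcase}. For each of the three families $I_G$, the strategy is the same: present the Rees algebra $\Rc(I_G)$ as $T/J_G$ with the bigraded polynomial ring $T$ and the monomial order $<$ fixed in the corresponding subsection, then check that this order is of the product type~(\ref{monomialorder}), and finally read off from the relevant theorem that $\ini_<(J_G)$ is generated by quadratic monomials of the shape demanded by the $x$-condition.

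First I would verify the order is of the required form. In the biclique case the order on $T$ compares first the $\varphi_i,\psi_j^k$-part lexicographically and only then, in case of a tie, the $x_i,y_j,z_k$-part; in the path case the pure lexicographic order with $y_1>\cdots>y_s>x_1>\cdots>x_n$ is an elimination order comparing the $y$-exponents lexicographically and then the $x$-exponents; and in the Cameron--Walker case the order is defined outright by comparing $v''<_{\rm rev}w''$ in the $y$-variables and, if $v''=w''$, $v'<_{\rm lex}w'$ in $S$. Each of these is exactly the shape~(\ref{monomialorder}), with the roles of ``$x$'' played by the original vertex variables and of ``$y$'' by the Rees variables.

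Next I would invoke the explicit descriptions of $\ini_<(J_G)$: Theorem~\ref{GB} for biclique graphs, Theorem~\ref{path_initial} for $P_n$, and Theorem~\ref{CWinitial} for Cameron--Walker graphs whose bipartite part is complete. In each case the listed minimal generators are quadratic and have degree $\leq 1$ in the vertex variables of $S$: for biclique graphs $x_i\varphi_i$, $y_j\psi_j^k$, $z_k\psi_j^k$ have $S$-degree one and $\psi_{j_1}^{k_2}\psi_{j_2}^{k_1}$ has $S$-degree zero; for $P_n$ the generators in $\Ac_n$ have $S$-degree one and those in $\Bc_n$ have $S$-degree zero; for the Cameron--Walker graphs the initial monomials of types (i)--(iii) have $S$-degree one and those of types (iv)--(vi) have $S$-degree zero. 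Hence in all three cases $J_G$ satisfies the $x$-condition with respect to an order of type~(\ref{monomialorder}) and $\ini_<(J_G)$ is generated by quadratics, so Theorem~\ref{monomialcase} applies verbatim and gives that $I_G^k$ has linear quotients with respect to its minimal monomial generating set, hence is componentwise linear, for every $k\geq 1$. Since the three bullets exhaust the hypotheses, the Corollary follows.

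I expect no genuine obstacle at this stage: all the substantive work --- computing the reduced Gr\"obner bases and checking, via \cite[Lemma 1.1]{AHH}, that the listed monomials generate the initial ideal --- has already been carried out in Theorems~\ref{GB}, \ref{path_initial} and \ref{CWinitial}. The only points needing a moment's care are the two bookkeeping checks just described: that each ad hoc monomial order is literally of the product form~(\ref{monomialorder}), and that none of the quadratic generators of $\ini_<(J_G)$ has $S$-degree two, so that the full $x$-condition (and not merely quadraticity) holds. Both are immediate from the statements above.
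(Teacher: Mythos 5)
Your proposal is correct and follows essentially the same route the paper takes: the paper's own proof is a one-line citation of Theorems~\ref{GB}, \ref{path_initial}, \ref{CWinitial} together with Theorem~\ref{monomialcase}. Your extra bookkeeping --- checking that each ad hoc monomial order is of the product form~(\ref{monomialorder}) and that the quadratic generators of $\ini_<(J_G)$ have $S$-degree at most one --- simply makes explicit the hypothesis verification that the paper leaves implicit.
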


{}

\end{document}